\newtheorem {theorem}{Theorem}[section]
\newtheorem {proposition}{Proposition}[section]
\newtheorem {corollary}{Corollary}[section]
\newtheorem {lemma}{Lemma}[section]
\newtheorem {assumption}{Assumption}[section]
\newtheorem {definition}{Definition}[section]
\newtheorem {remark}{Remark}[section]
\def\cS{{\mathcal{S}}}
\def\R{{\mathbb{R}}}
\def\argmin{\mathop{\rm arg\,min}}
\def\Argmin{\mathop{\rm Arg\,min}}
\def\ox{{\bar x}}
\def\oy{{\bar y}}
\def\ov{{\bar v}}
\title{\sf Calculus of the exponent of Kurdyka-{\L}ojasiewicz inequality and its applications to linear convergence of first-order methods}
\author{
Guoyin Li \thanks{Department of Applied
Mathematics, University of New South Wales, Sydney 2052, Australia.
E-mail: {g.li@unsw.edu.au}. This author was partially supported by a research grant from Australian Research Council.}
\and Ting Kei Pong \thanks{Department of Applied Mathematics, the Hong Kong Polytechnic University, Hong Kong.
This author was supported partly by Hong Kong Research Grants Council PolyU153085/16p. E-mail: {tk.pong@polyu.edu.hk}.}
}
\date{Revised Version: August 30, 2021}   
\begin{document}
\maketitle

\begin{abstract}
  In this paper, we study the Kurdyka-{\L}ojasiewicz (KL) exponent, an important quantity for analyzing the convergence rate of first-order methods. Specifically, we develop various calculus rules to deduce the KL exponent of new (possibly nonconvex and nonsmooth) functions formed from functions with known KL exponents. In addition, we show that the well-studied Luo-Tseng error bound together with a mild assumption on the separation of stationary values implies that the KL exponent is $\frac12$. The Luo-Tseng error bound is known to hold for a large class of concrete structured optimization problems, and thus we deduce the KL exponent of a large class of functions whose exponents were previously unknown. Building upon this and the calculus rules, we are then able to show that for many convex or nonconvex optimization models for applications such as sparse recovery, their objective function's KL exponent is $\frac12$. This includes the least squares problem with smoothly clipped absolute deviation (SCAD) regularization or minimax concave penalty (MCP) regularization and the logistic regression problem with $\ell_1$ regularization. Since many existing local convergence rate analysis for first-order methods in the nonconvex scenario relies on the KL exponent, our results
  enable us to obtain explicit convergence rate for various first-order methods when they are applied to a large variety of practical optimization models. Finally, we further illustrate how our results can be applied to establishing local linear convergence of the proximal gradient algorithm and the inertial proximal algorithm with constant step-sizes for some specific models that arise in sparse recovery.
\end{abstract}

\section{Introduction}

Large-scale nonsmooth and nonconvex optimization problems are ubiquitous in machine learning and data analysis. Tremendous efforts have thus been directed at designing efficient algorithms for solving these problems. One popular class of algorithms is the class of first-order methods. These methods are noted for their simplicity, ease-of-implementation and relatively (often surprisingly) good performance; some notable examples include the
proximal gradient algorithm, the inertial proximal algorithms and the alternating direction method of multipliers, etc.
Due to the excellent performance and wide applicability of first-order methods, their convergence behaviors have been extensively studied in recent years; see, for example, \cite{AmesHong14,Jerome,AtBoSv13,Chambolle2014,HongLuoRa16,Johnstone2015,LiPong14_2,LiWu95,OCBP14} and references therein. Analyzing the convergence rate of first-order methods is an important step towards a better understanding of existing algorithms, and is also crucial for developing new optimization models and numerical schemes.

As demonstrated in \cite[Theorem~3.4]{Jerome}, the convergence behavior of many first-order methods can be understood using the celebrated Kurdyka-{\L}ojasiewicz (KL) property and its associated KL exponent; see Definitions~\ref{def:KL} and \ref{def:KLexponent}. The KL property and its associated KL exponent have their roots in algebraic geometry, and they describe a qualitative relationship between the value of a suitable potential function (depending on the optimization model and the algorithm being considered) and some first-order information (gradient or subgradient) of the potential function. The KL property has been applied to analyzing local convergence rate of various first-order methods for a wide variety of problems by many researchers; see, for example, \cite{Jerome,FrGaPe15,LiPong14_2,XuYin13}. In these studies, a proto-typical theorem on convergence rate takes the following form:

\noindent
{\bf Prototypical result on convergence rate.} {\it For a certain algorithm of interest, consider a suitable potential function. Suppose that the potential function satisfies the KL property with an exponent of $\alpha\in [0,1)$, and that $\{x^k\}$ is a bounded sequence generated by the algorithm. Then the following results hold.
\begin{enumerate}[{\rm (i)}]
  \item If $\alpha = 0$, then $\{x^k\}$ converges finitely.
  \item If $\alpha\in (0,\frac12]$, then $\{x^k\}$ converges locally linearly.
  \item If $\alpha\in (\frac12,1)$, then $\{x^k\}$ converges locally sublinearly.
\end{enumerate}
}
\noindent
While this kind of convergence results is prominent and theoretically powerful, for the results to be fully informative, one has to be able to estimate the KL exponent. Moreover, in order to guarantee a local linear convergence rate, it is desirable to be able to determine whether a given model has a KL exponent of at most $\frac12$, or be able to construct a new model whose KL exponent is at most $\frac12$ if the old one does not have the desired KL exponent.

However, as noted in \cite[Page 63, Section 2.1]{Luo_Pang_MPEC}, the KL exponent of a given function is often extremely hard to determine or estimate. There are only few results available in the literature concerning explicit KL exponent of a function. One scenario where an explicit estimate of the KL exponent is known is when the function can be expressed as the maximum of finitely many polynomials. In this case, it has been shown in \cite[Theorem~3.3]{Li_Mor_Pham} that the KL exponent can be estimated explicitly in terms of the dimension of the underlying space and the maximum degree of the involved polynomials. However, the derived estimate grows rapidly with the dimension of the problem, and so, leads to rather weak sublinear convergence rate. It is only until recently that a dimension-independent KL exponent of convex piecewise linear-quadratic functions is known, thanks to \cite[Theorem~5]{BNPS15} that connects the KL property and the concept of error bound\footnote{This notion is different from the Luo-Tseng error bound to be discussed in Definition~\ref{def:LTeb}.} for convex functions. In addition, a KL exponent of $\frac12$ is only established in \cite{LWS15} very recently for a class of quadratic optimization problems with matrix variables satisfying orthogonality constraints. Nevertheless, the KL exponent of many common optimization models, such as the least squares problem with smoothly clipped absolute deviation (SCAD) regularization \cite{Fan97} or minimax concave penalty (MCP) regularization \cite{Zhang10} and the logistic regression problem with $\ell_1$ regularization \cite{Shi_Yin_Osher_Sajda}, are still unknown to the best of our knowledge. In this paper, we attempt to further address the problem of determining the explicit KL exponents of optimization models, especially for those that arise in practical applications.

The main contributions of this paper are the rules for computing {\em explicitly} the KL exponent of many (convex or nonconvex) optimization models that arise in applications such as statistical machine learning. We accomplish this via two different means: studying calculus rules and building connections with the concept of Luo-Tseng error bound; see Definition~\ref{def:LTeb}. The Luo-Tseng error bound was used for establishing local linear convergence for various first-order methods, and was shown to hold for a wide range of problems; see, for example, \cite{LuoT92,LuoT92_2,LuoT93,Tse10,TseY09,ZSo15} for details. This concept is different from the error bound studied in \cite{BNPS15} because the Luo-Tseng error bound is defined for specially structured optimization problems and involves first-order information, while the error bound studied in \cite{BNPS15} does not explicitly involve any first-order information. The different nature of these two concepts was also noted in \cite[Section 1]{BNPS15}, in which the Luo-Tseng error bound was referred as ``first-order error bound".

In this paper, we first study various calculus rules for the KL exponent. For example, we deduce the KL exponent of the minimum of finitely many KL functions, the KL exponent of the Moreau envelope of a convex KL function, and the KL exponent of a convex objective from its Lagrangian relaxation, etc., under suitable assumptions. This is the context of Section~\ref{sec3}. These rules are useful in our subsequent analysis of the KL exponent of concrete optimization models that arise in applications. Next, we show that if the Luo-Tseng error bound holds and a mild assumption on the separation of stationary values is satisfied, then the function is a KL function with an exponent of $\frac12$. This is done in Section~\ref{sec:LT}. Upon making this connection, we can now take advantage of the relatively better studied concept of Luo-Tseng error bound, which is known to hold for a wide range of concrete optimization problems; see, for example, \cite{LuoT92,LuoT92_2,LuoT93,Tse10,TseY09,ZSo15}. Hence, in Section~\ref{sec6}, building upon the calculus rules and the connection with Luo-Tseng error bound, we show that many optimization models that arise in applications such as sparse recovery have objectives whose KL exponent is $\frac12$; this covers the least squares problem with
smoothly clipped absolute deviation (SCAD) \cite{Fan97} or minimax concave penalty (MCP) \cite{Zhang10}
 regularization, and the logistic regression problem with $\ell_1$ regularization \cite{Shi_Yin_Osher_Sajda}. In addition, we also illustrate how our result can be used for establishing linear convergence of some first-order methods, such as the proximal gradient algorithm and the inertial proximal algorithm \cite{OCBP14} with constant step-sizes. Finally, we present some concluding remarks in Section~\ref{sec:conclude}.

\section{Notation and preliminaries}

In this paper, we use $\R^n$ to denote the $n$-dimensional Euclidean space, equipped with the standard inner product $\langle\cdot,\cdot\rangle$ and the induced norm $\|\cdot\|$. The closed ball centered at $x \in \R^n$ with radius $r$ is denoted by $B(x,r)$. We denote the nonnegative orthant by $\R^n_+$, and the set of $n\times n$ symmetric matrices by ${\cal S}^n$. For a vector $x\in \R^n$, we use $\|x\|_1$ to denote the $\ell_1$ norm and $\|x\|_0$ to denote the number of entries in $x$ that are nonzero (``$\ell_0$ norm"). For a (nonempty) closed set $D\subseteq \R^n$, the indicator function $\delta_D$ is defined as
\[
\delta_D(x) = \begin{cases}
  0 & {\rm if}\ x\in D,\\
  \infty & {\rm otherwise}.
\end{cases}
\]
In addition, we denote the distance from an $x\in \R^n$ to $D$ by ${\rm dist}(x,D) = \inf_{y\in D}\|x - y\|$, and the set of points in $D$ that achieve this infimum (the projection of $x$ onto $D$) is denoted by ${\rm Proj}_D(x)$. The set ${\rm Proj}_D(x)$ becomes a singleton if $D$ is a closed convex set. Finally, we write ${\rm ri}\,D$ to represent the relative interior of a closed convex set $D$.

For an extended-real-valued function $f:\R^n\rightarrow [-\infty,\infty]$, the domain is defined as ${\rm dom}\,f=\{x:\; f(x) < \infty\}$. Such a function is called proper if it is never $-\infty$ and its domain is nonempty, and is called closed if it is lower semicontinuous.
For a proper function $f:\R^n \to (-\infty,\infty]$, we let $z\stackrel{f}{\to}x$ denote $z\to x$ and $f(z)\to f(x)$. The regular subdifferential of a proper function $f$ \cite[Page 301, Definition~8.3(a)]{Rock98} at $x\in\mathrm{dom}\,f$ is given by
\begin{equation*}
\widehat{\partial} f(x):=\left\{v\in\R^n :\; \liminf_{z\to x, z \neq x}\frac{f(z)-f(x)-\langle v,z-x\rangle}{\|z-x\|}\ge 0 \right\}.
\end{equation*}
The (limiting) {\em subdifferential} of a proper function $f$ \cite[Page 301, Definition~8.3(b)]{Rock98} at $x\in\mathrm{dom}\,f$ is then defined by
\begin{equation}\label{ls}
\partial f(x):=\left\{v\in\R^n :\;\exists x^t\stackrel{f}{\to}x,\;v^t\to v\;\mbox{ with } v^t \in \widehat{\partial} f(x^t)  
\mbox{ for each }t\right\}.
\end{equation}
By convention, if $x \notin {\rm dom} \, f$, then $\partial f(x)=\widehat{\partial} f(x) =\emptyset$.
We also write ${\rm dom}\,\partial f := \{x\in \R^n:\; \partial f(x)\neq \emptyset\}$. It is well known that when $f$ is continuously differentiable,
the subdifferential \eqref{ls} reduces to the gradient of $f$ denoted by $\nabla f$; see, for example, \cite[Exercise~8.8(b)]{Rock98}. Moreover, when $f$ is convex, the subdifferential \eqref{ls} reduces to the classical subdifferential in convex analysis; see, for example, \cite[Proposition~8.12]{Rock98}. The limiting subdifferential enjoys rich and comprehensive calculus rules and has been widely used in nonsmooth and nonconvex optimization \cite{Boris_book,Rock98}.
We also define the limiting (resp. regular) normal cone of a closed set $S$ at $x \in S$ as $N_S(x)=\partial \delta_S(x)$ (resp. $\widehat{N}_S(x)=\widehat{\partial} \delta_S(x)$) where $\delta_S$ is the indicator function of $S$.
A closed set $S$ is called regular at $x \in S$ if $N_S(x) = \widehat{N}_S(x)$ (see \cite[Definition~6.4]{Rock98}), and a proper closed function $f$ is called regular at $x\in {\rm dom}\,f$ if its epigraph ${\rm epi}f$ is
regular at the point $(x,f(x)) \in {\rm epi}\,f$ (see \cite[Definition~7.25]{Rock98}).
Finally, we say that $x$ is a stationary point of proper closed function $f$ if $0\in \partial f(x)$. It is known that any local minimizer of $f$ is a stationary point; see, for example, \cite[Theorem~10.1]{Rock98}.

For a proper closed convex function $P$, the proximal mapping at any $z$ is defined as
  \begin{equation*}
    {\rm prox}_P(z) = \argmin_{x\in \R^n} \left\{P(x) + \frac{1}{2}\|x - z\|^2\right\},
  \end{equation*}
where $\argmin$ denotes the unique minimizer of the optimization problem $\min\limits_x \{P(x) + \frac{1}{2}\|x - z\|^2\}$.\footnote{This problem has a unique minimizer because the objective is proper closed and strongly convex. For a general optimization problem $\min\limits_x f(x)$, we use $\Argmin f$ to denote the set of minimizers, which may be empty, a singleton or may contain more than one point.}
This mapping is nonexpansive, i.e., for any $y$ and $z$, we have
\begin{equation}\label{eq:prox_nonexpansive}
  \|{\rm prox}_P(y) - {\rm prox}_P(z)\| \le \|y - z\|;
\end{equation}
see, for example, \cite[Page~340]{Roc70}.
Moreover, it is routine to show that $x = {\rm prox}_P(z)$ if and only if $z\in x + \partial P(x)$.

The following property is defined for proper closed functions of the form $f = h + P$, where $h$ is a proper closed function with an open domain, and is continuously differentiable with a locally Lipschitz continuous gradient on ${\rm dom}\, h$, and $P$ is proper closed convex. Recall that for this class of functions, we have $\bar x \in {\cal X}$ if and only if $\bar x = {\rm prox}_P(\bar x - \nabla h(\bar x))$, where $\cal X$ denotes the set of stationary points of $f$. Indeed, we have
\[
\bar x\in {\cal X} \Longleftrightarrow 0 \in \partial (h+P)(\bar x) \stackrel{(i)}{\Longleftrightarrow} 0 \in \nabla h(\bar x) + \partial P(\bar x)
\Longleftrightarrow \bar x - \nabla h(\bar x)\in \bar x + \partial P(\bar x) \Longleftrightarrow \bar x = {\rm prox}_P(\bar x - \nabla h(\bar x)),
\]
where (i) follows from \cite[Exercise~8.8(c)]{Rock98}.

\begin{definition}\label{def:LTeb}{\bf (Luo-Tseng error bound)\footnote{We adapt the definition from \cite[Assumption~2a]{TseY09}.}}
Let $\cal X$ be the set of stationary points of $f$.  Suppose that ${\cal X}\neq \emptyset$. We say that the Luo-Tseng error bound \footnote{This is referred as first-order error bound in \cite[Section~1]{BNPS15}.} holds if for any $\zeta \ge \inf f$, there exist $c$, $\epsilon > 0$ so that
  \begin{equation}\label{eq:LTeb}
    {\rm dist}(x,{\cal X}) \le c \|{\rm prox}_P(x - \nabla h(x)) - x\|
  \end{equation}
  whenever $\|{\rm prox}_P(x - \nabla h(x)) - x\| < \epsilon$ and $f(x) \le \zeta$.
\end{definition}
It is known that this property is satisfied for many choices of $h$ and $P$, and we refer to \cite{LuoT92,LuoT92_2,LuoT93,Tse10,TseY09,ZSo15} and references therein for more detailed discussions. This property was used for establishing local linear convergence of various first-order methods applied to minimizing $f = h + P$.

Recently, the following property was also used extensively for analyzing convergence rate of first-order methods, mainly for possibly nonconvex objective functions; see, for example, \cite{Jerome,AtBoSv13}.

\begin{definition}\label{def:KL} {\bf (KL property \& KL function)}
  We say that a proper closed function $f$ has the Kurdyka-{\L}ojasiewicz (KL) property at $\bar{x}\in {\rm dom}\,\partial f$ if there exist a neighborhood $\cal N$ of $\bar{x}$, $\nu\in (0,\infty]$ and a continuous concave function $\psi:[0,\nu)\rightarrow {\mathbb{R}}_+$ with $\psi(0) = 0$ such that:
  \begin{enumerate}[{\rm (i)}]
    \item $\psi$ is continuously differentiable on $(0,\nu)$ with $\psi' > 0$;
    \item for all $x\in {\cal N}$ with $f(\bar{x})< f(x) < f(\bar{x}) + \nu$, one has
    \begin{equation*}
      \psi'(f(x) - f(\bar{x}))\,{\rm dist}(0,\partial f(x))\ge 1.
    \end{equation*}
  \end{enumerate}
  A proper closed function $f$ satisfying the KL property at all points in ${\rm dom}\,\partial f$ is called a KL function.
\end{definition}

In this paper, we are interested in the KL exponent, which is defined \cite{Jerome,AtBoSv13} as follows.
\begin{definition}{\bf (KL exponent)}\label{def:KLexponent}
  For a proper closed function $f$ satisfying the KL property at $\bar x\in {\rm dom}\,\partial f$, if the corresponding function $\psi$ can be chosen as $\psi(s)= c\, s^{1-\alpha}$ for some $c>0$ and $\alpha \in [0,1)$, i.e.,
  there exist $c$, $\epsilon > 0$ and $\nu \in (0,\infty]$ so that
  \begin{equation}\label{eq:00}
    {\rm dist}(0,\partial f(x)) \ge c(f(x) - f(\bar x))^\alpha
  \end{equation}
  whenever $\|x - \bar x\| \le \epsilon$ and $f(\bar x)< f(x) < f(\bar x) + \nu$,
  then we say that $f$ has the KL property at $\bar x$ with an exponent of $\alpha$. If $f$ is a KL function and has the same exponent $\alpha$ at any $\bar x\in {\rm dom}\,\partial f$, then we say that $f$ is a KL function with an exponent of $\alpha$.  \footnote{In classical algebraic geometry, the exponent $\alpha$ is also referred as the {\L}ojasiewicz exponent.}
\end{definition}

This definition encompasses broad classes of function that arise in practical optimization problems. For example, it is known that if $f$ is a proper closed semi-algebraic function \cite{AtBoSv13}, then $f$ is a KL function with a suitable exponent $\alpha \in [0,1)$. As established in \cite[Theorem~3.4]{Jerome} and many subsequent work, KL exponent has a close relationship with the rate of convergence of many commonly used optimization methods.

Before ending this section, we state two auxiliary lemmas. The first result is an immediate consequence of the fact that the set-valued mapping $x\mapsto \partial f(x)$ is outer semicontinuous (with respect to the $f$-attentive convergence, i.e., $y\stackrel{f}{\to}x$; see \cite[Proposition~8.7]{Rock98}), and can be found in \cite[Remark 4 (b)]{Jerome}. This result will be used repeatedly at various places in our discussion below. We include a proof for self-containedness.

\begin{lemma}\label{lem:prep2}
Suppose that $f$ is a proper closed function, $\bar x\in {\rm dom}\,\partial f$ and $0\notin \partial f(\bar x)$. Then,  for any $\alpha\in [0,1)$, $f$ satisfies the KL property at $\bar x$ with an exponent of $\alpha$.
\end{lemma}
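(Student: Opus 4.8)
The plan is to exploit the two facts baked into the hypotheses: first, that $0\notin\partial f(\bar x)$ forces ${\rm dist}(0,\partial f(\bar x))$ to be \emph{strictly} positive; and second, that the limiting subdifferential is outer semicontinuous with respect to $f$-attentive convergence (\cite[Proposition~8.7]{Rock98}), so this strictly positive separation from $0$ must persist throughout a suitable neighborhood of $\bar x$. I would begin by setting $m := {\rm dist}(0,\partial f(\bar x))$ and noting $m>0$: indeed $\partial f(\bar x)$ is a closed set by the very definition \eqref{ls} of the limiting subdifferential, and it does not contain $0$, so the distance from $0$ to it is positive (and attained).

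The heart of the argument, and the only real step, is to show that there exist $\epsilon>0$ and $\nu'>0$ such that ${\rm dist}(0,\partial f(x))\ge m/2$ whenever $\|x-\bar x\|\le\epsilon$ and $f(\bar x)<f(x)<f(\bar x)+\nu'$. I would prove this by contradiction. If it failed, then for every $t$ one could find a point $x^t$ with $\|x^t-\bar x\|\le 1/t$ and $f(\bar x)<f(x^t)<f(\bar x)+1/t$, yet ${\rm dist}(0,\partial f(x^t))<m/2$. Since the distance is attained on the closed set $\partial f(x^t)$, pick $v^t\in\partial f(x^t)$ with $\|v^t\|<m/2$. By construction $x^t\stackrel{f}{\to}\bar x$, and the sequence $\{v^t\}$ is bounded, so passing to a subsequence we may assume $v^t\to v$ with $\|v\|\le m/2$. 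Outer semicontinuity then yields $v\in\partial f(\bar x)$, whence $m={\rm dist}(0,\partial f(\bar x))\le\|v\|\le m/2$, a contradiction. This is the step I expect to be the crux; everything else is bookkeeping.

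With the lower bound in hand, the conclusion follows because $(f(x)-f(\bar x))^\alpha$ can be made arbitrarily small by shrinking $\nu'$. Concretely, I would set $\nu:=\min\{\nu',1\}$ and $c:=m/2$. Then for any $x$ with $\|x-\bar x\|\le\epsilon$ and $f(\bar x)<f(x)<f(\bar x)+\nu$, we have $f(x)-f(\bar x)<\nu\le 1$, so that $(f(x)-f(\bar x))^\alpha<1$ for every $\alpha\in[0,1)$, and therefore
\[
{\rm dist}(0,\partial f(x))\ge \frac{m}{2}\ge \frac{m}{2}\,(f(x)-f(\bar x))^\alpha = c\,(f(x)-f(\bar x))^\alpha,
\]
which is exactly \eqref{eq:00}. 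Finally I would observe that the case $\partial f(x)=\emptyset$ is automatically covered: by the convention ${\rm dist}(0,\emptyset)=+\infty$, the required inequality holds trivially there, and such points are precisely the ones excluded from the contradiction argument above. Hence $f$ satisfies the KL property at $\bar x$ with exponent $\alpha$, for the arbitrary $\alpha\in[0,1)$ fixed at the outset.
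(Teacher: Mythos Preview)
Your proof is correct and follows essentially the same approach as the paper's: both set a threshold equal to half of ${\rm dist}(0,\partial f(\bar x))$, use a contradiction argument based on outer semicontinuity of $\partial f$ (\cite[Proposition~8.7]{Rock98}) to propagate this lower bound to a neighborhood, and then bound $(f(x)-f(\bar x))^\alpha$ by $1$ after shrinking the function-value window below $1$. Your explicit remark on the case $\partial f(x)=\emptyset$ is a nice touch that the paper leaves implicit.
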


\begin{proof}
  Fix any $\alpha\in [0,1)$.
  Since $0\notin \partial f(\bar x)$ and $\partial f(\bar x)$ is nonempty and closed, it follows that ${\rm dist}(0,\partial f(\bar x))$ is positive and finite. Define $\eta := \frac12 {\rm dist}(0,\partial f(\bar x)) > 0$. We claim that there exists $\epsilon \in (0,1)$ so that ${\rm dist}(0,\partial f(x)) > \eta$ whenever $\|x-\bar x\| \le \epsilon$ and $f(\bar x)< f(x) < f(\bar x) + \epsilon$.

  Suppose for the sake of contradiction that this is not true. Then there exists a sequence $\{x^k\}$ with $x^k \to \bar x$ and $f(x^k)\to f(\bar x)$ so that
  \[
  {\rm dist}(0,\partial f(x^k)) \le \eta.
  \]
  In particular, there exists a sequence $\{\xi^k\}$ satisfying $\xi^k\in \partial f(x^k)$ and $\|\xi^k\|\le \eta$. By passing to a subsequence if necessary, we may assume without loss of generality that $\xi^k \to \bar \xi$ for some $\bar\xi$, and we have $\bar\xi\in \partial f(\bar x)$, thanks to \cite[Proposition~8.7]{Rock98}. But then we have $2\eta = {\rm dist}(0,\partial f(\bar x)) \le \|\bar \xi\| \le \eta$, a contradiction. Thus, there exists $\epsilon \in (0,1)$ so that ${\rm dist}(0,\partial f(x)) > \eta$ whenever $\|x-\bar x\| \le \epsilon$ and $f(\bar x)< f(x) < f(\bar x) + \epsilon$.

  Using this, we see immediately that
  \[
  \eta(f(x) - f(\bar x))^\alpha \le \eta < {\rm dist}(0,\partial f(x))
  \]
  whenever $\|x-\bar x\| \le \epsilon$ and $f(\bar x)< f(x) < f(\bar x) + \epsilon$, showing that $f$ satisfies the KL property at $\bar x$ with an exponent of $\alpha$. This completes the proof.
\end{proof}

The second result concerns the equivalence of ``norms", whose proof is simple and is omitted.

\begin{lemma}\label{ainequ}
  Let $t > 0$. Then there exist $C_1\ge C_2 > 0$ so that
  \[
  C_2\|a\| \le (a_1^t + \cdots + a_m^t)^\frac1t \le C_1\|a\|
  \]
  for any $a = (a_1,\ldots,a_m)\in \R^m_+$.
\end{lemma}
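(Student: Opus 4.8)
The plan is to exploit the positive homogeneity of both quantities and reduce the statement to a compactness argument on a sphere. Write $g(a) := (a_1^t + \cdots + a_m^t)^{1/t}$ for $a\in\R^m_+$. The key observation is that both $g$ and the Euclidean norm are positively homogeneous of degree one on $\R^m_+$: for any $\lambda > 0$ one has $g(\lambda a) = \lambda\, g(a)$ and $\|\lambda a\| = \lambda\|a\|$. Consequently the ratio $g(a)/\|a\|$ depends only on the direction of $a$, so it suffices to control $g$ on the set $K := \{a\in\R^m_+ :\; \|a\| = 1\}$, which is closed and bounded, hence compact.

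The second ingredient is continuity of $g$ on $\R^m_+$. For any $t > 0$, the maps $s\mapsto s^t$ (with the convention $0^t = 0$) and $s\mapsto s^{1/t}$ are continuous on $[0,\infty)$, so $g$, being a composition of these with a finite sum, is continuous on $\R^m_+$. In particular $g$ is continuous on the compact set $K$ and therefore attains a maximum value $C_1$ and a minimum value $C_2$ on $K$. Since every $a\in K$ is nonzero and has nonnegative coordinates, at least one coordinate is strictly positive, whence $g(a) > 0$; as $C_2$ is the minimum over $K$ of a strictly positive continuous function, we conclude $C_2 > 0$. Clearly $C_1 \ge C_2 > 0$.

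Finally I would extend the bound from $K$ to all of $\R^m_+$ by homogeneity. For $a\neq 0$, write $a = \|a\|\,\hat a$ with $\hat a = a/\|a\| \in K$; then $C_2 \le g(\hat a) \le C_1$ gives, after multiplying through by $\|a\|$, the desired estimate $C_2\|a\| \le g(a) \le C_1\|a\|$. For $a = 0$ all three expressions vanish and the inequality holds trivially, completing the argument.

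I do not expect a genuine obstacle here; the only point requiring a moment's care is that when $0 < t < 1$ the quantity $g$ is not actually a norm (the triangle inequality fails), so the classical Minkowski-based comparison of $\ell_p$ norms is unavailable. The compactness-plus-homogeneity route sidesteps this entirely, handling every $t > 0$ uniformly, and the strict positivity of $C_2$ is guaranteed precisely because the zero vector is excluded from $K$.
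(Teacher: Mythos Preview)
Your argument is correct: positive homogeneity reduces the comparison to the compact set $K=\{a\in\R^m_+:\|a\|=1\}$, where continuity of $g$ and strict positivity on $K$ yield the constants. The paper itself omits the proof as ``simple,'' so there is no approach to compare against; your compactness-plus-homogeneity route is exactly the standard one and handles all $t>0$ uniformly, as you note.
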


\section{Calculus of the KL exponent}\label{sec3}

In this section, we discuss how the KL exponent behaves under various operations on KL functions. We briefly summarize our results below. The required assumptions will be made explicit in the respective theorems.
\begin{enumerate}[{\rm (i)}]
  \item Exponent for $\min_{1\le i\le r}f_i$ given the exponents of $f_i$ for each $i$; see Theorem~\ref{thm:minKL} and Corollary~\ref{cor:minKL}.
  \item Exponent for $g(F(x))$ when the Jacobian of $F$ is surjective, given the exponent of $g$; see Theorem~\ref{thm:compositeKL}.
  \item Exponent for $\sum_{i=1}^mf_i(x_i)$ given the exponents of $f_i$ for each $i$; see Theorem~\ref{thm:block_separable_KL}.
  \item Exponent for the Moreau envelope of a convex KL function; see Theorem~\ref{thm:Moreau}.
  \item Deducing the exponent from the Lagrangian relaxation for convex problems; see Theorem~\ref{thm:Lagrange}.
  \item Exponent for a potential function used in the convergence analysis of the inertial proximal algorithm in \cite{OCBP14}; see Theorem~\ref{thm:extra}.
    \item Deducing the exponent of a partly smooth KL function by looking at its restriction on its active manifold; see Theorem~\ref{thm:partlysmooth}.
\end{enumerate}
We shall make use of some of these calculus rules in Section~\ref{sec6} to deduce the KL exponent of some concrete optimization models.

We start with our first result, which concerns the minimum of finitely many KL functions. This rule will prove to be useful in Section~\ref{sec6}. Indeed, as we shall see there, many nonconvex optimization problems that arise in applications have objectives that can be written as the minimum of finitely many KL functions whose exponents can be deduced from our results in Section~\ref{sec:LT}; this includes some prominent and widely used NP-hard optimization model problems, for example, the least squares problem with cardinality constraint \cite{BluDav08}.

\begin{theorem} {\bf (Exponent for minimum of finitely many KL functions)} \label{thm:minKL}
  Let $f_i$, $1\le i\le r$, be proper closed functions, $f := \min_{1\le i\le r}f_i$ be continuous on ${\rm dom}\,\partial f$ and $\bar x\in {\rm dom}\,\partial f\cap \big(\bigcap_{i\in I(\bar x)}{\rm dom}\,\partial f_i\big)$, where $I(\bar{x}):=\{i: f_i(\bar{x})=f(\bar{x})\}$. Suppose further that each $f_i$, $i\in I(\bar x)$, satisfies the KL property at $\bar x$ with an exponent of $\alpha_i \in [0,1)$. Then $f$ satisfies the KL property at $\bar x$ with an exponent of
$\alpha=\max\{\alpha_i: i\in I(\bar x)\}$.
\end{theorem}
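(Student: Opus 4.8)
The plan is to verify the exponent inequality \eqref{eq:00} directly, after first disposing of a trivial case. If $0\notin\partial f(\bar x)$, then Lemma~\ref{lem:prep2} already gives the KL property at $\bar x$ with any exponent, in particular with $\alpha$. So I may assume $0\in\partial f(\bar x)$, and it suffices to produce constants $c,\epsilon>0$ and $\nu\in(0,1]$ such that ${\rm dist}(0,\partial f(x))\ge c\,(f(x)-f(\bar x))^\alpha$ whenever $\|x-\bar x\|\le\epsilon$ and $f(\bar x)<f(x)<f(\bar x)+\nu$.

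The heart of the argument is a subdifferential inclusion localizing $\partial f$ onto the active components. For $y\in{\rm dom}\,f$ write $J(y):=\{i:\ f_i(y)=f(y)\}$, which is nonempty since a minimum of finitely many values is attained. The first key step is the elementary fact that for $j\in J(y)$ one has $\widehat\partial f(y)\subseteq\widehat\partial f_j(y)$: since $f\le f_j$ everywhere while $f(y)=f_j(y)$, the difference quotient defining $\widehat\partial f(y)$ is dominated by the one for $f_j$, so the $\liminf$ condition transfers. I would then upgrade this to $\partial f(x)\subseteq\bigcup_{j\in J(x)}\partial f_j(x)$ by a limiting argument: given $v\in\partial f(x)$, take $x^t\stackrel{f}{\to}x$ and $v^t\to v$ with $v^t\in\widehat\partial f(x^t)$, pick $j_t\in J(x^t)$ so that $v^t\in\widehat\partial f_{j_t}(x^t)$, and pass to a subsequence along which $j_t\equiv j^*$ (possible as there are finitely many indices). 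Lower semicontinuity of $f_{j^*}$, together with $f_{j^*}(x^t)=f(x^t)\to f(x)$ and $f_{j^*}\ge f$, forces $f_{j^*}(x)=f(x)$, i.e.\ $j^*\in J(x)$, and simultaneously yields $x^t\stackrel{f_{j^*}}{\to}x$; hence $v\in\partial f_{j^*}(x)$ by definition \eqref{ls} of the limiting subdifferential.

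Next I would localize the active set at $x$. For each $i\notin I(\bar x)$ we have $f_i(\bar x)>f(\bar x)$, so by lower semicontinuity there are a radius and a threshold $\nu_0>0$ with $f_i(x)>f(\bar x)+\nu_0$ for $x$ near $\bar x$; thus if in addition $f(x)<f(\bar x)+\nu_0$, then $f_i(x)>f(x)$ and $i\notin J(x)$, giving $J(x)\subseteq I(\bar x)$. Choosing $\epsilon$ small and $\nu\le\nu_0$ (with the continuity of $f$ on ${\rm dom}\,\partial f$ controlling the level $f(x)$ as $x\to\bar x$) makes this applicable in the KL region. Combined with the inclusion above, every $v\in\partial f(x)$ satisfies $v\in\partial f_{j^*}(x)$ for some $j^*\in I(\bar x)$ with $f_{j^*}(x)=f(x)$ and $f_{j^*}(\bar x)=f(\bar x)$.

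Finally I would invoke the KL property with exponent $\alpha_i$ of each $f_i$, $i\in I(\bar x)$, at $\bar x$, obtaining constants $c_i,\epsilon_i>0$ and $\nu_i$ with ${\rm dist}(0,\partial f_i(x))\ge c_i\,(f_i(x)-f(\bar x))^{\alpha_i}$ on the respective region. Shrinking $\epsilon$ and $\nu$ below $\min_i\epsilon_i$, below $\min_i\nu_i$, and below $1$, I get for the index $j^*$ above $\|v\|\ge{\rm dist}(0,\partial f_{j^*}(x))\ge c_{j^*}(f(x)-f(\bar x))^{\alpha_{j^*}}$. Since $0<f(x)-f(\bar x)<1$ and $\alpha_{j^*}\le\alpha$, we have $(f(x)-f(\bar x))^{\alpha_{j^*}}\ge(f(x)-f(\bar x))^{\alpha}$, so setting $c:=\min_{i\in I(\bar x)}c_i$ and taking the infimum over $v\in\partial f(x)$ yields ${\rm dist}(0,\partial f(x))\ge c\,(f(x)-f(\bar x))^\alpha$, as required. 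I expect the main obstacle to be the inclusion step, specifically using lower semicontinuity to guarantee that the limiting index $j^*$ remains active at the limit point $x$, so that the regular-subdifferential inclusion survives the passage to the limiting subdifferential; the active-set localization and the exponent comparison for a base in $(0,1)$ are then routine.
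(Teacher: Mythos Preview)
Your proposal is correct and follows essentially the same route as the paper: localize the active index set near $\bar x$, use the inclusion $\partial f(x)\subseteq\bigcup_{j\in J(x)}\partial f_j(x)$, and then invoke the KL inequality of each active $f_i$ together with $\nu\le 1$ to compare exponents. The only notable difference is that the paper quotes this subdifferential inclusion from \cite[Theorem~5.5]{MoS95}, whereas you derive it directly via $\widehat\partial f(y)\subseteq\widehat\partial f_j(y)$ for $j\in J(y)$ and a pigeonhole/limiting argument; your initial split into the case $0\notin\partial f(\bar x)$ is harmless but unnecessary, since the paper's chain of inequalities goes through regardless.
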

\begin{proof}
From the definition of $I(\bar x)$, we see that
$\min_{i \notin I(\bar{x})}\{f_i(\bar{x})\}> f(\bar{x})$.
Since $x\mapsto \min_{i \notin I(\bar{x})}\{f_i(x)\}$ is lower semicontinuous and the function $f$ is continuous on ${\rm dom}\,\partial f$,
there exists $\eta > 0$ such that for all $x \in {\rm dom}\,\partial f$ with $\|x - \bar x\|\le \eta$, we have
\[
\min_{i \notin I(\bar{x})}\{f_i(x)\} >  f(x).
\]
Thus, whenever $x \in {\rm dom}\,\partial f$ and $\|x - \bar x\|\le \eta$, we have $I(x)\subseteq I(\bar x)$.

Next, using $f(x)=\min_{1 \le i \le r}\{f_i(x)\}$ and the subdifferential rule of the minimum of finitely many functions \cite[Theorem~5.5]{MoS95}, we obtain for all $x\in {\rm dom}\,\partial f$ that
\begin{equation}\label{subinclu}
\partial f(x) \subseteq \bigcup_{i \in I(x)} \partial f_i(x).
\end{equation}
On the other hand, by assumption, for each $i\in I(\bar x)$, there exist $\epsilon_i$, $c_i$, $\nu_i>0$ such that for all $x \in {\rm dom}\,\partial f_i$ with $\|x-\bar{x}\|\le \epsilon_i$ and $f_i(\bar x)< f_i(x)< f_i(\bar x)+\nu_i$, one has
\begin{equation}\label{KL}
{\rm dist}(0,\partial f_i(x)) \ge c_i    \big(f_i(x)-f_i(\bar x)\big)^{\alpha_i}.
\end{equation}
Let $c=\min_{i\in I(\bar x)}c_i$, $\epsilon=\min\{\eta,\min_{i\in I(\bar x)}\epsilon_i\}$ and $\nu=\min_{i\in I(\bar x)}\{1,\nu_i\}$. Take any $x \in {\rm dom}\,\partial f$ with $\|x-\bar{x}\|\le \epsilon$ and $f(\bar x)< f(x)< f(\bar x)+\nu$. Then $I(x)\subseteq I(\bar x)$ and we have
\[
{\rm dist}(0,\partial f(x)) \ge  \min_{i\in I(x)}{\rm dist}(0,\partial f_i(x)) \ge c   \min_{i\in I(x)}\big(f_i(x)-f_i(\bar x)\big)^{\alpha} = c \big(f(x)-f(\bar x)\big)^{\alpha},
\]
where the first inequality follows from \eqref{subinclu}, the second inequality follows from \eqref{KL}, the construction of $c$, $\nu$, $\epsilon$ and $\alpha$, as well as the facts that $f_i(x)=f(x)$ and $f_i(\bar x) = f(\bar x)$ for $i\in I(x)\subseteq I(\bar x)$; these facts also give the last equality. This completes the proof.
\end{proof}

We have the following immediate corollary.
\begin{corollary}\label{cor:minKL}
  Let $f_i$, $1\le i\le r$, be proper closed functions with ${\rm dom}\,f_i = {\rm dom}\,\partial f_i$ for all $i$, and $f := \min_{1\le i\le r}f_i$ be continuous on ${\rm dom}\,\partial f$. Suppose further that each $f_i$ is a KL function with an exponent of $\alpha_i \in [0,1)$ for $1\le i\le r$. Then $f$ is a KL function with an exponent of $\alpha=\max\{\alpha_i: 1\le i\le r\}$.
\end{corollary}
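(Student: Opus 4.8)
The plan is to reduce the corollary to a pointwise application of Theorem~\ref{thm:minKL} at every $\bar x\in{\rm dom}\,\partial f$. Indeed, being a KL function with a prescribed exponent means exactly that the pointwise KL estimate holds at each point of ${\rm dom}\,\partial f$, so I would fix an arbitrary $\bar x\in{\rm dom}\,\partial f$ and verify that all hypotheses of Theorem~\ref{thm:minKL} hold at that point.

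The only hypothesis that is not granted verbatim by the corollary is the membership $\bar x\in{\rm dom}\,\partial f\cap\big(\bigcap_{i\in I(\bar x)}{\rm dom}\,\partial f_i\big)$; the remaining requirements ($f_i$ proper closed, $f$ continuous on ${\rm dom}\,\partial f$, and each $f_i$ satisfying the KL property with exponent $\alpha_i$) follow directly from the standing assumptions. To establish the membership, I would note that $\bar x\in{\rm dom}\,\partial f\subseteq{\rm dom}\,f$, so $f(\bar x)<\infty$; then for each $i\in I(\bar x)$ one has $f_i(\bar x)=f(\bar x)<\infty$, i.e. $\bar x\in{\rm dom}\,f_i$, and the hypothesis ${\rm dom}\,f_i={\rm dom}\,\partial f_i$ immediately upgrades this to $\bar x\in{\rm dom}\,\partial f_i$. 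Since each $f_i$ is a KL function with exponent $\alpha_i$ and $\bar x\in{\rm dom}\,\partial f_i$, it satisfies the KL property at $\bar x$ with exponent $\alpha_i$ for every $i\in I(\bar x)$. Applying Theorem~\ref{thm:minKL} at $\bar x$ then yields the KL property for $f$ at $\bar x$ with exponent $\max\{\alpha_i:i\in I(\bar x)\}$.

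The one point needing a small extra argument is that this pointwise exponent is in general only $\le\alpha=\max\{\alpha_i:1\le i\le r\}$, whereas the conclusion asks for the single uniform exponent $\alpha$ at every point. I would close this gap with the standard monotonicity observation that a larger exponent is a weaker requirement: if \eqref{eq:00} holds at $\bar x$ with some exponent $\beta\le\alpha$ and parameter $\nu$, then after shrinking $\nu$ so that $\nu\le 1$ one has $(f(x)-f(\bar x))^\alpha\le(f(x)-f(\bar x))^\beta$ on the relevant range $0<f(x)-f(\bar x)<\nu\le 1$, and hence the same estimate holds with $\beta$ replaced by $\alpha$ (keeping the same constant $c$). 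This converts the pointwise exponent into the uniform exponent $\alpha$ at $\bar x$, and since $\bar x\in{\rm dom}\,\partial f$ was arbitrary, $f$ is a KL function with exponent $\alpha$.

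I do not expect a genuine obstacle: the mathematical content is carried entirely by Theorem~\ref{thm:minKL}, and the role of the extra hypothesis ${\rm dom}\,f_i={\rm dom}\,\partial f_i$ is precisely to supply the domain membership needed to invoke that theorem at \emph{every} point rather than at a single prescribed $\bar x$. The only care required is the ``larger exponent is weaker'' step, which is routine once one restricts attention to $f(x)-f(\bar x)<1$.
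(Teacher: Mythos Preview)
Your proposal is correct and follows essentially the same approach as the paper: verify the domain membership $\bar x\in\bigcap_{i\in I(\bar x)}{\rm dom}\,\partial f_i$ using the hypothesis ${\rm dom}\,f_i={\rm dom}\,\partial f_i$, then invoke Theorem~\ref{thm:minKL} pointwise. Your explicit treatment of the ``larger exponent is weaker'' step (to pass from $\max\{\alpha_i:i\in I(\bar x)\}$ to the uniform $\alpha$) is a detail the paper leaves implicit, but otherwise the arguments coincide.
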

\begin{proof}
  In view of Theorem~\ref{thm:minKL}, it suffices to show that for any $\bar x\in {\rm dom}\,\partial f$, we have $\bar x\in \bigcap_{i\in I(\bar x)}{\rm dom}\,\partial f_i$.
  To this end, take any $\bar x\in {\rm dom}\,\partial f$. Note that we have ${\rm dom}\,\partial f\subseteq {\rm dom}\, f$ by the definition, and hence $f(\bar x) < \infty$. In addition, from the definition of $I(\bar x)$, we have $f(\bar x) = f_i(\bar x)$ for all $i\in I(\bar x)$. Hence, $f_i(\bar x)$ is finite for all $i \in I(\bar x)$. Thus, we conclude that $\bar x\in {\rm dom}\,f_i$ for all $i \in I(\bar x)$, which implies that $\bar x\in \bigcap_{i\in I(\bar x)}{\rm dom}\,\partial f_i$ because ${\rm dom}\,f_i = {\rm dom}\,\partial f_i$ for all $i$ by assumption.
\end{proof}

The next theorem concerns the composition of a KL function with a smooth function that has a surjective Jacobian mapping.

\begin{theorem} {\bf (Exponent for composition of KL functions)} \label{thm:compositeKL}
  Let $f(x) = (g\circ F)(x)$, where $g$ is a proper closed function on $\mathbb{R}^{m}$ and $F:\mathbb{R}^n \rightarrow \mathbb{R}^m$ is a continuously differentiable mapping. Suppose in addition that $g$ is a KL function with an exponent of $\alpha \in [0,1)$ and the Jacobian $J F(\bar x)\in \R^{m\times n}$ is a surjective mapping at some $\bar x\in {\rm dom}\,\partial f$. Then $f$ has the KL property at $\bar x$ with an exponent of $\alpha$.
\end{theorem}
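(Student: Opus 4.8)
The plan is to combine the KL inequality for $g$ at $\bar y := F(\bar x)$ with a subdifferential chain rule and a uniform lower bound on the smallest singular value of the Jacobian near $\bar x$. The whole argument is a distance estimate: push the known inequality ${\rm dist}(0,\partial g(y))\ge c(g(y)-g(\bar y))^\alpha$ through the linear map $JF(x)^*$, using surjectivity to control the loss incurred by that map.

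First I would record the consequences of surjectivity. Since $JF(\bar x)$ is surjective, the matrix $JF(\bar x)JF(\bar x)^*$ is positive definite; set $\lambda_0:=\lambda_{\min}(JF(\bar x)JF(\bar x)^*)>0$. Because $F$ is continuously differentiable, $x\mapsto JF(x)$ is continuous, so there is a radius $\epsilon_1>0$ with $\lambda_{\min}(JF(x)JF(x)^*)\ge \lambda_0/2$, and in particular $JF(x)$ still surjective, for all $\|x-\bar x\|\le\epsilon_1$. From $\|JF(x)^* v\|^2=\langle v,JF(x)JF(x)^*v\rangle$ this yields the key estimate $\|JF(x)^* v\|\ge\sqrt{\lambda_0/2}\,\|v\|$ for every $v\in\R^m$ and every such $x$. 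Next I would invoke the limiting-subdifferential chain rule \cite[Theorem~10.6]{Rock98}: surjectivity of $JF(x)$ makes $JF(x)^*$ injective, so the required qualification condition (the only $v\in\partial^\infty g(F(x))$ with $JF(x)^* v=0$ is $v=0$) holds automatically, giving the inclusion $\partial f(x)\subseteq JF(x)^*\partial g(F(x))$ for all $\|x-\bar x\|\le\epsilon_1$. Applied at $\bar x$, this inclusion together with $\bar x\in{\rm dom}\,\partial f$ forces $\partial g(\bar y)\neq\emptyset$, i.e. $\bar y\in{\rm dom}\,\partial g$, so the KL property of $g$ is genuinely available at $\bar y$: there are $c,\epsilon_g,\nu>0$ with ${\rm dist}(0,\partial g(y))\ge c(g(y)-g(\bar y))^\alpha$ whenever $\|y-\bar y\|\le\epsilon_g$ and $g(\bar y)<g(y)<g(\bar y)+\nu$.

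Then I would shrink the neighborhood once more: by continuity of $F$, pick $\epsilon\in(0,\epsilon_1]$ so that $\|F(x)-\bar y\|\le\epsilon_g$ whenever $\|x-\bar x\|\le\epsilon$, and set $\nu':=\nu$. Taking any $x$ with $\|x-\bar x\|\le\epsilon$ and $f(\bar x)<f(x)<f(\bar x)+\nu'$ and writing $y=F(x)$, the identities $f(\bar x)=g(\bar y)$ and $f(x)=g(y)$ show that $y$ satisfies the hypotheses of the KL inequality for $g$. Chaining the three ingredients gives
\[
{\rm dist}(0,\partial f(x))\ge {\rm dist}(0,JF(x)^*\partial g(y))=\inf_{v\in\partial g(y)}\|JF(x)^* v\|\ge\sqrt{\tfrac{\lambda_0}{2}}\,{\rm dist}(0,\partial g(y))\ge c\sqrt{\tfrac{\lambda_0}{2}}\,(f(x)-f(\bar x))^\alpha,
\]
where the first inequality uses the chain-rule inclusion together with the fact that passing to a subset can only increase the distance to $0$. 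Taking $c':=c\sqrt{\lambda_0/2}$ establishes the KL property of $f$ at $\bar x$ with exponent $\alpha$.

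I expect the main obstacle to be making sure the one-sided chain-rule inclusion $\partial f(x)\subseteq JF(x)^*\partial g(F(x))$ holds on a \emph{full} neighborhood of $\bar x$, not merely at $\bar x$ itself, which is exactly why I verify that surjectivity—and hence the qualification condition—persists on $B(\bar x,\epsilon_1)$. A pleasant feature worth flagging is that only this inclusion is needed: the reverse inclusion (equality in the chain rule) never enters the distance estimate, so no regularity assumption on $g$ is required.
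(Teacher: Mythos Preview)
Your proof is correct and follows essentially the same route as the paper: both combine the KL inequality for $g$ at $F(\bar x)$, a uniform lower bound $\|JF(x)^*v\|\ge l\|v\|$ near $\bar x$, and the subdifferential chain rule. The only cosmetic differences are that the paper obtains the lower bound via the Lyusternik--Graves theorem rather than your direct eigenvalue argument, and invokes \cite[Exercise~10.7]{Rock98} to get the chain rule as an equality, whereas you rightly observe that the inclusion $\partial f(x)\subseteq JF(x)^*\partial g(F(x))$ already suffices.
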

\begin{proof}
Note from \cite[Exercise~10.7]{Rock98} and $\bar x\in {\rm dom}\,\partial f$ that $F(\bar x)\in {\rm dom}\,\partial g$. As $g$ is a KL function, there exist $\epsilon_0$, $c_0$, $\nu_0>0$ such that for all $u \in {\rm dom}\,\partial g$ with $\|u-F(\bar{x})\|\le \epsilon_0$ and $g(F(\bar x))< g(u)< g(F(\bar x))+\nu_0$, one has
\begin{equation}\label{eq:1001}
{\rm dist}(0,\partial g(u)) \ge  c_0   \big(g(u)-g(F(\bar x))\big)^{\alpha}.
\end{equation}
On the other hand, since the linear map $J F( \bar x)$ is surjective and $F$ is continuously differentiable, it follows from the classical Lyusternik-Graves theorem (see, for example, \cite[Theorem 1.57]{Boris_book})
that there are numbers $l> 0$ and $\epsilon_1 \in (0,\epsilon_0)$ such that for all $x$ with $\|x -\bar x\|\le \epsilon_1$
\[
l \, \mathbb{B}_{\mathbb{R}^m} \subseteq J F(x)(\mathbb{B}_{\mathbb{R}^n}),
\]
where $\mathbb{B}_{\mathbb{R}^m}$ and $\mathbb{B}_{\mathbb{R}^n}$ are the closed unit balls in $\mathbb{R}^m$ and $\mathbb{R}^n$, respectively.
This implies that for all $z \in \mathbb{R}^m$ we have the following estimate:
\begin{equation}\label{eq:100}
\|J F(x)^* z\| \ge l \|z\|,
\end{equation}
whenever $\|x -\bar x\|\le \epsilon_1$. Moreover, from the chain rule of the limiting subdifferential for composite functions (see, for example, \cite[Exercise~10.7]{Rock98}),  we have  for all $x$ with $\|x-\bar x\| \le \epsilon_1$ that
\[
\partial f(x) = \{J F(x)^* v: v \in  \partial g(F(x))\},
\]
because $JF(x)$ is a surjective mapping for all such $x$.

Now, let $\epsilon \in (0,\epsilon_1]$ be such that $\|F(x)-F(\bar{x})\| \le \epsilon_0$ for all $\|x-\bar{x}\|\le \epsilon$, $c= l \, c_0$ and $\nu=\nu_0$.
Fix $x \in {\rm dom}\,\partial f$ with $\|x-\bar{x}\|\le \epsilon$ and $f(\bar x)< f(x)< f(\bar x)+\nu$. Let $a \in \partial f(x)$ be such that
$\|a\|={\rm dist}(0,\partial f(x))$.
Then, we have $a=J F(x)^* v$ for some  $v \in  \partial g(F(x))$. Hence, it follows from \eqref{eq:100} that
\[
\|a\|=\|J F(x)^* v\| \ge l \|v\|.
\]
In addition, since $v \in \partial g(F(x))$, applying \eqref{eq:1001} with $u=F(x)$ gives us that
\[
\|v\| \ge {\rm dist}\big(0, \partial g(F(x))\big) \ge c_0   \big(g(F(x))-g(F(\bar x))\big)^{\alpha}= c_0 \big(f(x)-f(\bar x)\big)^{\alpha}.
\]
Therefore,
\[
{\rm dist}(0,\partial f(x)) =\|a\| \ge l \, \|v\| \ge c\, \big(f(x)-f(\bar x)\big)^{\alpha}.
\]

\end{proof}

Our next theorem concerns separable sums.

\begin{theorem} {\bf (Exponent for block separable sums of KL functions)} \label{thm:block_separable_KL}
 Let $n_i,n \in \mathbb{N}$, $i=1,\ldots,m$ be such that $\sum_{i=1}^m n_i=n$. Let $f(x) = \sum_{i=1}^m f_i(x_i)$, where $f_i$, $1\le i\le r$, are proper closed functions on $\mathbb{R}^{n_i}$ with $x=(x_1,\ldots,x_m) \in \mathbb{R}^n$. Suppose further that each $f_i$ is a KL function with an exponent of $\alpha_i \in (0,1)$ and that each $f_i$ is continuous on ${\rm dom}\,\partial f_i$, $i=1,\ldots,r$. Then $f$ is a KL function with an exponent of
$\alpha= \max\{\alpha_i:1\le i\le r\}$.
\end{theorem}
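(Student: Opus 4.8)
The plan is to reduce everything to the blockwise subdifferential structure. Since $f(x)=\sum_{i=1}^m f_i(x_i)$ is a separable sum of proper closed functions, $f$ is itself proper closed, and the limiting subdifferential factorizes as $\partial f(x)=\partial f_1(x_1)\times\cdots\times\partial f_m(x_m)$ by the subdifferential calculus for separable functions \cite[Proposition~10.5]{Rock98}. In particular, $\bar x\in{\rm dom}\,\partial f$ forces $\bar x_i\in{\rm dom}\,\partial f_i$ for every $i$, so that each $f_i$, being a KL function, satisfies the KL inequality at $\bar x_i$: there are $\epsilon_i,c_i,\nu_i>0$ with ${\rm dist}(0,\partial f_i(x_i))\ge c_i(f_i(x_i)-f_i(\bar x_i))^{\alpha_i}$ whenever $\|x_i-\bar x_i\|\le\epsilon_i$ and $0<f_i(x_i)-f_i(\bar x_i)<\nu_i$. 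The factorization also delivers the one-sided estimate I will exploit: for any fixed block index $j$, every $v\in\partial f(x)$ has $j$-th block $v_j\in\partial f_j(x_j)$ with $\|v\|\ge\|v_j\|$, whence ${\rm dist}(0,\partial f(x))\ge{\rm dist}(0,\partial f_j(x_j))$.

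Next I would fix the constants. Writing $d_i:=f_i(x_i)-f_i(\bar x_i)$ and $t:=f(x)-f(\bar x)=\sum_{i=1}^m d_i$, I would use the continuity of each $f_i$ on ${\rm dom}\,\partial f_i$, together with $\bar x_i\in{\rm dom}\,\partial f_i$, to choose $\epsilon\in(0,\min_i\epsilon_i]$ small enough that $\|x-\bar x\|\le\epsilon$ with $x\in{\rm dom}\,\partial f$ (so that $x_i\in{\rm dom}\,\partial f_i$ and $\|x_i-\bar x_i\|\le\epsilon$) implies $|d_i|<\nu_i$ for every $i$. I would then set $\nu:=\min\{1,\min_i\nu_i\}$ and $c:=\tfrac1m\min_i c_i$. (For $x\notin{\rm dom}\,\partial f$ the target inequality holds vacuously since $\partial f(x)=\emptyset$.)

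The heart of the argument is a ``dominant block'' reduction that copes with blocks having $d_i\le0$. Fix $x\in{\rm dom}\,\partial f$ with $\|x-\bar x\|\le\epsilon$ and $0<t<\nu$, and let $I_+:=\{i:d_i>0\}$, which is nonempty because $\sum_i d_i=t>0$. Discarding the nonpositive blocks gives $\sum_{i\in I_+}d_i\ge t$, so the maximizing index $j\in\argmax_{i\in I_+}d_i$ satisfies $d_j\ge t/|I_+|\ge t/m>0$; moreover $d_j=|d_j|<\nu_j$ by the choice of $\epsilon$, and $\|x_j-\bar x_j\|\le\epsilon\le\epsilon_j$, so the KL inequality for $f_j$ is applicable at $x_j$. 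Combining the one-sided estimate with this inequality and using $t<1$ with $\alpha_j\le\alpha$ (hence $t^{\alpha_j}\ge t^{\alpha}$) and $(1/m)^{\alpha_j}\ge 1/m$, I obtain
\[
{\rm dist}(0,\partial f(x))\ge{\rm dist}(0,\partial f_j(x_j))\ge c_j\,d_j^{\alpha_j}\ge c_j\,(t/m)^{\alpha_j}\ge \frac{c_j}{m}\,t^{\alpha}\ge c\,\big(f(x)-f(\bar x)\big)^{\alpha},
\]
which is exactly the KL inequality for $f$ at $\bar x$ with exponent $\alpha$. As $\bar x\in{\rm dom}\,\partial f$ is arbitrary, $f$ is a KL function with exponent $\alpha$.

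I expect the main obstacle to be precisely the presence of blocks with $d_i\le 0$. A naive summation of the per-block KL inequalities fails twice over: the right-hand side $\sum_i c_i d_i^{\alpha_i}$ is not even well defined when some $d_i<0$ with fractional $\alpha_i$, and the left-hand side receives no useful contribution from blocks where $0\in\partial f_i(x_i)$. The dominant-block reduction, powered by $\sum_{i\in I_+}d_i\ge t$ and $d_j\ge t/m$, is what circumvents this by isolating a single surviving KL inequality; and the continuity hypothesis on each $f_i$ is exactly what keeps $d_j$ below $\nu_j$ so that this inequality may legitimately be invoked.
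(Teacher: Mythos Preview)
Your proof is correct, but it takes a genuinely different route from the paper's. The paper works with \emph{all} blocks simultaneously: it picks $z\in\partial f(x)$ attaining ${\rm dist}(0,\partial f(x))$, observes that its blocks $z_i$ attain ${\rm dist}(0,\partial f_i(x_i))$, rewrites each blockwise KL inequality in the form $c_i\|z_i\|^{1/\alpha_i}\ge f_i(x_i)-f_i(\bar x_i)$ (valid also when the right side is nonpositive), sums over $i$, and then passes from $\sum_i\|z_i\|^{1/\alpha}$ back to $\|z\|$ via the norm-equivalence Lemma~\ref{ainequ}, together with a case split on whether $\|z\|\le1$. Your argument instead isolates a single dominant block $j$ with $d_j\ge t/m$ and applies only the one KL inequality there, using the crude projection bound ${\rm dist}(0,\partial f(x))\ge{\rm dist}(0,\partial f_j(x_j))$. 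This is more elementary---it avoids both Lemma~\ref{ainequ} and the case split---and handles the blocks with $d_i\le0$ by simply discarding them rather than by noting the KL inequality is trivially true there. The paper's summation route exploits the full product structure ${\rm dist}^2(0,\partial f(x))=\sum_i{\rm dist}^2(0,\partial f_i(x_i))$ and would in principle give a constant that scales better with $m$, but since constants are not tracked either approach suffices.
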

\begin{proof}
Denote $\bar x=(\bar x_1,\ldots,\bar x_m)\in {\rm dom}\,\partial f$ with $\bar x_i \in\mathbb{R}^{n_i}$, $i=1,\ldots,m$. Then \cite[Proposition~10.5]{Rock98} shows that $\bar x_i\in {\rm dom}\,\partial f_i$ for each $i$.
As each $f_i$, $i=1,\ldots,m$, is a KL function with exponent $\alpha_i$, there exist $\epsilon_i$, $c_i$, $\nu_i>0$ such that for all $x_i \in {\rm dom}\,\partial f_i \subseteq \mathbb{R}^{n_i}$ with $\|x_i-\bar{x}_i\|\le \epsilon_i$ and $f_i(\bar x_i)< f_i(x_i)< f_i(\bar x_i)+\nu_i$, one has
\begin{equation}\label{KL1}
c_i{\rm dist}^\frac{1}{\alpha_i} (0,\partial f_i(x_i)) \ge f_i(x_i)-f_i(\bar x_i).
\end{equation}
Since the left hand side of \eqref{KL1} is always nonnegative, the above relation holds trivially whenever $f_i(x_i) \le f_i(\bar x_i)$. In addition, since $f_i$ is continuous on ${\rm dom}\,\partial f_i$ by assumption for each $i=1,\ldots,r$, by shrinking $\epsilon_i > 0$ if necessary, we conclude that $f_i(x_i)< f_i(\bar x_i)+\nu_i$ whenever $x_i \in {\rm dom} \partial f_i$  and $\|x_i-\bar{x}_i\|\le \epsilon_i$. Thus, we have from these two observations that for all $x_i \in {\rm dom}\,\partial f_i \subseteq \mathbb{R}^{n_i}$ with $\|x_i-\bar{x}_i\|\le \epsilon_i$,
\begin{equation}\label{KL11}
c_i {\rm dist}^{\frac{1}{\alpha_i}} (0,\partial f_i(x_i)) \ge   f_i(x_i)-f_i(\bar x_i).
\end{equation}

Let $\epsilon=\min_{1 \le i \le r}\epsilon_i$. Take any $x \in {\rm dom}\,\partial f$ with $\|x-\bar{x}\|\le \epsilon$ and $f(\bar x)< f(x)< f(\bar x)+1$. We will now verify \eqref{eq:00}.
To this end, let
$z \in \partial f(x)$ be such that
\[
\|z\|={\rm dist}(0,\partial f(x)).
\]
If $\|z\| > 1$, then clearly
\[
{\rm dist}(0,\partial f(x)) = \|z\| \ge (f(x) - f(\bar x))^\alpha
\]
since $0< f(x) - f(\bar x)< 1$. Thus, we consider the case where $\|z\|\le 1$. In this case, recall from \cite[Proposition~10.5]{Rock98} that
\[
\partial f(x) = \prod_{i=1}^m \partial f_i(x_i).
\]
Hence, there exist $z_i \in \mathbb{R}^{n_i}$ with $z_i \in \partial f_i(x_i)  \mbox{ and } \|z_i\|={\rm dist}(0,\partial f_i(x_i))$ such that
\[
\sum_{i=1}^m \|z_i\|^2=\|z\|^2.
\]
This together with \eqref{KL11} implies that
\begin{equation}\label{ineq3}
C_0\sum_{i=1}^m\|z_i\|^{\frac{1}{\alpha_i}}\ge \sum_{i=1}^m c_i\|z_i\|^{\frac{1}{\alpha_i}} = \sum_{i=1}^m c_i {\rm dist}^{\frac{1}{\alpha_i}} (0,\partial f_i(x_i)) \ge \sum_{i=1}^m  [ f_i(x_i)-f_i(\bar x_i) ] =f(x)-f(\bar x)
\end{equation}
for $C_0 = \max_{1\le i\le r}c_i$.
Define $\alpha:=\max\{\alpha_i: 1 \le i \le r\}$. Since $\|z_i\| \le \|z\| \le 1$ and $0 < \alpha_i \le \alpha <1$, it then follows from \eqref{ineq3} that
\[
(f(x)-f(\bar x))^\alpha\le \left(C_0 \sum_{i=1}^m \, \|z_i\|^{\frac{1}{\alpha}}\right)^\alpha \le C_1C_0^\alpha\|z\| = C_1C_0^\alpha {\rm dist}(0,\partial f(x)),
\]
where the second inequality follows from Lemma~\ref{ainequ} with $t = \frac{1}{\alpha}$. This completes the proof.
\end{proof}

We now discuss the operation of taking Moreau envelope. This operation is a common operation for smoothing the objective function of convex optimization problems.

\begin{theorem}{\bf (Exponent for Moreau envelope of convex KL functions)}\label{thm:Moreau}
  Let $f$ be a proper closed convex function that is a KL function with an exponent of $\alpha\in (0,\frac23)$. Suppose further that $f$ is continuous on ${\rm dom}\,\partial f$. Fix $\lambda > 0$ and consider
  \[
  F_\lambda(x) := \inf_{y\in \R^n}\left\{f(y) + \frac{1}{2\lambda}\|y - x\|^2\right\}.
  \]
  Then $F_\lambda$ is a KL function with an exponent of $\max\{\frac12,\frac{\alpha}{2-2\alpha}\} < 1$.
\end{theorem}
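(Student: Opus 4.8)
The plan is to verify the KL inequality for $F_\lambda$ at every point of ${\rm dom}\,\partial F_\lambda$, treating separately the stationary and non-stationary points of $F_\lambda$. Since $f$ is proper closed convex, its Moreau envelope $F_\lambda$ is finite and continuously differentiable on all of $\R^n$, with $\nabla F_\lambda(x)=\frac1\lambda(x-p(x))$ where $p(x):={\rm prox}_{\lambda f}(x)$; thus ${\rm dom}\,\partial F_\lambda=\R^n$ and $\nabla F_\lambda(\bar x)=0$ exactly when $\bar x$ minimizes $f$. At any $\bar x$ that is not a minimizer of $f$ we have $\nabla F_\lambda(\bar x)\neq 0$, so Lemma~\ref{lem:prep2} gives the KL property at $\bar x$ with any exponent in $[0,1)$, in particular the claimed one. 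Hence the only substantive case is a minimizer $\bar x$ of $f$, where $p(\bar x)=\bar x$ and $F_\lambda(\bar x)=f(\bar x)=\inf f$.

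Fix such a minimizer $\bar x$. I would build on two facts. First, the exact decomposition $F_\lambda(x)-\inf f=\big(f(p(x))-\inf f\big)+\frac{\lambda}{2}\|\nabla F_\lambda(x)\|^2$, which follows from $F_\lambda(x)=f(p(x))+\frac1{2\lambda}\|p(x)-x\|^2$ together with $\|x-p(x)\|=\lambda\|\nabla F_\lambda(x)\|$. Second, and crucially, the optimality condition for the proximal map shows that $\nabla F_\lambda(x)=\frac1\lambda(x-p(x))\in\partial f(p(x))$; this is what converts the KL property of $f$ into a lower bound on $\|\nabla F_\lambda(x)\|$. Indeed, since ${\rm prox}_{\lambda f}$ is nonexpansive and $p(\bar x)=\bar x$, the point $p(x)$ is at least as close to $\bar x$ as $x$ is, so applying the KL inequality for $f$ at $\bar x$ to $y=p(x)$ yields $\|\nabla F_\lambda(x)\|\ge{\rm dist}(0,\partial f(p(x)))\ge c_0\big(f(p(x))-\inf f\big)^\alpha$ once $\|x-\bar x\|$ is small and $0<f(p(x))-\inf f<\nu_0$.

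Writing $G:=F_\lambda(x)-\inf f$, $g:=f(p(x))-\inf f$ and $t:=\|\nabla F_\lambda(x)\|$, the two facts read $G=g+\frac{\lambda}{2}t^2$ and $t\ge c_0 g^\alpha$. I would then split on which term carries at least half of $G$. If $g\le G/2$, then $\frac{\lambda}{2}t^2=G-g\ge G/2$, giving $t\ge\lambda^{-1/2}G^{1/2}$; if instead $g>G/2$, then $t\ge c_0 g^\alpha\ge c_0 2^{-\alpha}G^\alpha$. Restricting to $G<1$ (legitimate since we may shrink $\nu$), in either branch the corresponding lower bound is at least a constant times $G^{\max\{1/2,\alpha\}}$, because $G^\alpha\ge G^{1/2}$ when $\alpha\le\frac12$ and $G^{1/2}\ge G^\alpha$ when $\alpha>\frac12$; hence $t\ge c\,G^{\max\{1/2,\alpha\}}$ for a uniform $c>0$. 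This is precisely the KL inequality for $F_\lambda$ at $\bar x$ with exponent $\max\{\frac12,\alpha\}$. Since $\max\{\frac12,\alpha\}\le\max\{\frac12,\frac{\alpha}{2-2\alpha}\}$ and a KL inequality with a smaller exponent implies one with a larger exponent (for value gaps below $1$), the claimed exponent holds at every point, so $F_\lambda$ is a KL function with the stated exponent.

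The step I expect to need the most care is the transfer of the KL inequality of $f$ from $\bar x$ to the moving point $p(x)$: one must check simultaneously that $\|p(x)-\bar x\|$ is small (nonexpansiveness), that $f(p(x))<\inf f+\nu_0$ (which follows from $g\le G$ and $G$ small), and that the degenerate case $g=0$, where $p(x)$ is itself a minimizer of $f$ and the KL inequality of $f$ says nothing, is harmless because $g=0\le G/2$ lands in the first branch and needs only the gradient term. The remaining two-regime estimate is routine, and it is worth noting that it actually delivers the sharper exponent $\max\{\frac12,\alpha\}$, which implies—and is never worse than—the stated $\max\{\frac12,\frac{\alpha}{2-2\alpha}\}$.
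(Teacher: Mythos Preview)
Your argument is correct, and it takes a genuinely different (and in fact sharper) route than the paper's proof.

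The paper does not use the identity $F_\lambda(x)-\inf f = \big(f(p(x))-\inf f\big)+\tfrac{\lambda}{2}\|\nabla F_\lambda(x)\|^2$ directly. Instead it passes through the error-bound equivalence of \cite[Theorem~5]{BNPS15}: from the KL inequality of $f$ it extracts ${\rm dist}(y,\Argmin f)\le c\,(f(y)-\inf f)^{1-\alpha}$, combines this with the Lipschitz descent bound $F_\lambda(y)-F_\lambda(\bar x)\le\tfrac{1}{2\lambda}{\rm dist}(y,\Argmin f)^2$ to control $F_\lambda(p(u))-F_\lambda(\bar x)$ by $\|\nabla F_\lambda(u)\|^{(2-2\alpha)/\alpha}$, and then adds the convexity estimate $F_\lambda(u)-F_\lambda(p(u))\le\lambda\|\nabla F_\lambda(u)\|^2$. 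This two-step splitting is what produces the exponent $\max\{\tfrac12,\tfrac{\alpha}{2-2\alpha}\}$.

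Your approach sidesteps the external error-bound lemma entirely: using $\nabla F_\lambda(x)\in\partial f(p(x))$ and the nonexpansiveness of the prox, you apply the KL inequality of $f$ at $p(x)$ to bound $g$ directly by a power of $t$, and your dichotomy on which of $g$ or $\tfrac{\lambda}{2}t^2$ carries half of $G$ then yields $t\ge c\,G^{\max\{1/2,\alpha\}}$. This is more elementary and, as you note, delivers the strictly sharper exponent $\max\{\tfrac12,\alpha\}$ (which dominates the paper's for $\alpha\in(\tfrac12,\tfrac23)$); it also makes the continuity hypothesis on $f$ unnecessary, since the upper-value condition $f(p(x))<\inf f+\nu_0$ is forced by $g\le G$ rather than by continuity. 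Your handling of the boundary case $g=0$ is correct and needed.
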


\begin{proof}
  It suffices to consider the case $\Argmin F_\lambda\neq \emptyset$ and show that $F_\lambda$ has the KL property with an exponent of $\max\{\frac12,\frac{\alpha}{2-2\alpha}\}$ at any fixed $\bar x \in \Argmin F_\lambda$, in view of Lemma~\ref{lem:prep2} and the convexity of $F_\lambda$.

  To this end, recall from \cite[Proposition~12.29]{BauCom10} that, for all $x \in \mathbb{R}^n$,
  \begin{equation}\label{grad}
  \nabla F_\lambda(x) = \frac1\lambda (x - {\rm prox}_{\lambda f}(x)),
  \end{equation}
  and that $\nabla F_\lambda$ is Lipschitz continuous with a Lipschitz constant of $\frac1\lambda$. Consequently, we have for any $y$ that
  \begin{equation}\label{grad2}
  \begin{split}
  F_\lambda(y) - F_\lambda(\bar x) &= F_\lambda(y) - F_\lambda(\bar y) \le \langle\nabla F_\lambda(\bar y),y - \bar y\rangle + \frac{1}{2\lambda}\|y - \bar y\|^2 = \frac{1}{2\lambda}\|y - \bar y\|^2,
  \end{split}
  \end{equation}
  where $\bar y$ is the projection of $y$ onto $\Argmin F_\lambda$, and the last equality holds because $\nabla F_\lambda(\bar y) = 0$.

  Next, note that we have $\Argmin f=\{x \in \mathbb{R}^n: x={\rm prox}_f(x)\}$ according to \cite[Proposition~12.28]{BauCom10}, which implies
  $\Argmin f=\{x \in \mathbb{R}^n: x={\rm prox}_{\lambda f}(x)\}$ as $\lambda>0$. This together with \eqref{grad} gives $\Argmin F_\lambda = \Argmin f$. Hence, $\bar x\in \Argmin F_\lambda = \Argmin f\subseteq {\rm dom}\,\partial f$.
  Since $f$ is a KL function with an exponent of $\alpha$ and $\bar x\in {\rm dom}\,\partial f$, using the fact that $f$ is continuous on ${\rm dom}\,\partial f$, we obtain that
  there exist $c_0 > 0$ and $\epsilon > 0$ so that
  \begin{equation}\label{grad3}
    {\rm dist}(0,\partial f(y)) \ge c_0\, (f(y) - f(\bar x))^\alpha = c_0\, (f(y) - \inf f)^\alpha
  \end{equation}
  whenever $\|y - \bar x\| \le \epsilon$ and $y\in {\rm dom}\,\partial f$; here, the condition on the bound on function values is waived by using the continuity of $f$ on $\partial f$ and choosing a smaller $\epsilon$ if necessary. Moreover, in view of \cite[Theorem~5(i)]{BNPS15}, by shrinking $\epsilon$ if necessary, we conclude that there exists $c>0$ so that
  \begin{equation}\label{grad30}
  {\rm dist}(y,\Argmin f) \le c\, (f(y) - \inf f)^{1-\alpha},
  \end{equation}
  whenever $\|y - \bar x\| \le \epsilon$ and $y\in {\rm dom}\,\partial f$; here, the condition on the bound on function values is waived similarly as before.
  Finally, since $\Argmin F_\lambda = \Argmin f$, we have $\|y - \bar y\| = {\rm dist}(y,\Argmin f)$. Combining this with \eqref{grad3} and \eqref{grad30} implies that for some $c_1 > 0$,
  \begin{equation}\label{grad4}
    \|y - \bar y\| \le c_1[{\rm dist}(0,\partial f(y))]^{\frac{1 - \alpha}{\alpha}}
  \end{equation}
  whenever $\|y - \bar x\| \le \epsilon$ and $y\in {\rm dom}\,\partial f$.

  Now, using the definition of the proximal mapping as minimizer, we have by using the first-order optimality condition that for any $u$,
  \[
  0\in \lambda\partial f({\rm prox}_{\lambda f}(u)) + ({\rm prox}_{\lambda f}(u)-u).
  \]
  In particular, ${\rm prox}_{\lambda f}(u)\in {\rm dom}\,\partial f$. In addition, using the above relation and \eqref{grad}, we deduce that
  \begin{equation}\label{grad5_5}
  {\rm dist}(0,\partial f({\rm prox}_{\lambda f}(u))) \le \frac1\lambda\|u - {\rm prox}_{\lambda f}(u)\| = \|\nabla F_\lambda(u)\|.
  \end{equation}
  Fix an arbitrary $u$ with $\|u-\bar x\| \le \epsilon$. Then $\|{\rm prox}_{\lambda f}(u) - \bar x\| = \|{\rm prox}_{\lambda f}(u) - {\rm prox}_{\lambda f}(\bar x)\|\le \|u - \bar x\|$, where the inequality is due to \eqref{eq:prox_nonexpansive}. Let $y = {\rm prox}_{\lambda f}(u)$. Then $y\in {\rm dom}\,\partial f$ and $\|y-\bar x\|\le \epsilon$. Hence, the relations \eqref{grad4} and \eqref{grad5_5} imply that
\begin{equation*}      \|y - \bar y\| \le c_1\, \|\nabla F_\lambda(u)\|^{\frac{1-\alpha}{\alpha}}.
  \end{equation*}
%
  Applying  \eqref{grad2} with $y = {\rm prox}_{\lambda f}(u)$ and combining this with the preceding relation, we obtain further that
  \begin{equation}\label{grad6}
    F_\lambda({\rm prox}_{\lambda f}(u)) - F_\lambda(\bar x)\le \frac{c_1^2}{2\lambda}\|\nabla F_\lambda(u)\|^{\frac{2-2\alpha}{\alpha}}.
  \end{equation}
  whenever $\|u - \bar x\| \le \epsilon$. Finally, from the convexity of $F_\lambda$, we have
  \begin{equation}\label{grad7}
  F_\lambda(u) - F_\lambda({\rm prox}_{\lambda f}(u)) \le \langle\nabla F_\lambda(u),u-{\rm prox}_{\lambda f}(u)\rangle = \lambda\|\nabla F_\lambda(u)\|^2,
  \end{equation}
  where the equality follows from \eqref{grad}.

  Shrink $\epsilon$ further if necessary so that $\|\nabla F_\lambda(u)\| < 1$ whenever $\|u - \bar x\| \le \epsilon$; this is possible since $\nabla F_\lambda(\bar x) = 0$. Summing \eqref{grad6} and \eqref{grad7}, we obtain further that
  \[
  \begin{split}
  F_\lambda(u) - F_\lambda(\bar x) & \le \frac{c_1^2}{2\lambda}\|\nabla F_\lambda(u)\|^{\frac{2-2\alpha}{\alpha}} + \lambda\|\nabla F_\lambda(u)\|^2 \\
  & \le C\|\nabla F_\lambda(u)\|^{\min \left\{2,\frac{2-2\alpha}{\alpha}\right\}}
  \end{split}
  \]
  for some $C > 0$, whenever $\|u - \bar x\| \le \epsilon$. This completes the proof.
\end{proof}

Our next result concerns the Lagrangian relaxation. This result will be used later in Proposition \ref{prop:group_lasso} for a group LASSO model. In addition, the result, together with results in \cite{Li_Mor_Pham} that give the exponent of the maximum of finitely many convex polynomials, can be used to study the KL property of a large class of convex polynomial optimization problems with multiple convex polynomial constraints.

\begin{theorem}{\bf (KL exponent from Lagrangian relaxation)}\label{thm:Lagrange}
  Let $h(x) = l(Ax)$ for some continuous strictly convex function $l:\R^m\to \R$ and $A\in \R^{m\times n}$, $g:\R^n\to \R$ be a continuous convex function and $\alpha\in (0,1)$, $D$ be a closed convex set. Let $C=\{x: g(x) \le 0\}$. Suppose in addition that
  \begin{enumerate}[{\rm (i)}]
    \item there exists $x_0\in D$ with $g(x_0) < 0$;
    \item $\inf\limits_{x\in D} h(x) < \inf\limits_{x\in C \cap D}h(x)$;
    \item for any $\lambda > 0$, the function $h(x) + \lambda g(x) + \delta_D(x)$ is a KL function with an exponent of $\alpha$.
  \end{enumerate}
  Then $f(x) = h(x) + \delta_{C}(x) + \delta_D(x)$ has the KL property at any $\bar x\in C \cap D$, with an exponent of $\alpha$.
\end{theorem}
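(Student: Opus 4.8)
The plan is to combine convex Lagrangian duality with the equivalence between the KL exponent and an error bound for convex functions established in \cite[Theorem~5]{BNPS15}. First I would reduce to the case of minimizers. Fix $\bar x\in C\cap D$ lying in ${\rm dom}\,\partial f$. If $0\notin\partial f(\bar x)$, then Lemma~\ref{lem:prep2} already gives the KL property at $\bar x$ with exponent $\alpha$; since $f$ is convex, the only remaining points are global minimizers, so from here on I assume $\bar x\in\Argmin f$ and write $p^*:=\min f=\inf_{x\in C\cap D}h(x)=f(\bar x)$, which is finite.

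Next I would extract a strictly positive optimal multiplier. Slater's condition (i) guarantees strong duality together with attainment of the dual optimum, so there is $\bar\lambda\ge 0$ with $p^*=\inf_{x\in D}\{h(x)+\bar\lambda g(x)\}$ (see, e.g., \cite{Roc70}). Were $\bar\lambda=0$, this would read $p^*=\inf_{x\in D}h(x)$, contradicting assumption (ii); hence $\bar\lambda>0$ and I may invoke assumption (iii) with $\lambda=\bar\lambda$. Writing $\phi:=h+\bar\lambda g+\delta_D$, this says $\phi$ is a KL function with exponent $\alpha$, and the chain $\phi(\bar x)\le h(\bar x)=p^*=\inf_D\phi\le\phi(\bar x)$ shows $\bar x\in\Argmin\phi$ and, since $\bar\lambda>0$, that $g(\bar x)=0$.

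The crucial structural step is the identity $\Argmin f=\Argmin\phi=:S$. The inclusion $\Argmin f\subseteq\Argmin\phi$ is the complementary-slackness computation just performed, applied to an arbitrary minimizer of $f$. For the reverse inclusion I would use the strict convexity of $l$: on the convex set $\Argmin\phi$ the usual midpoint argument applied to $\phi=l\circ A+\bar\lambda g+\delta_D$ forces $A$ to be constant (equal to $A\bar x$), whence $h\equiv l(A\bar x)=p^*$ on $\Argmin\phi$; then $\phi(z)=p^*$ forces $\bar\lambda g(z)=0$, i.e.\ $g(z)=0$, so each $z\in\Argmin\phi$ is feasible with $h(z)=p^*$ and hence lies in $\Argmin f$. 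This is the step I expect to be the main obstacle, as it is precisely where the structure $h=l\circ A$ with $l$ strictly convex and the positivity $\bar\lambda>0$ must be used together; dropping strict convexity, $\Argmin\phi$ could strictly contain $\Argmin f$ and the argument would break.

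Finally I would transfer the error bound across the two functions. Since $\phi$ has KL exponent $\alpha$ and $\bar x\in S=\Argmin\phi$, \cite[Theorem~5]{BNPS15} yields $c',\epsilon>0$ with ${\rm dist}(x,S)\le c'(\phi(x)-p^*)^{1-\alpha}$ for $\|x-\bar x\|\le\epsilon$. For $x\in C\cap D={\rm dom}\,f$ one has $p^*\le\phi(x)=f(x)+\bar\lambda g(x)\le f(x)$, using $x\in D$, $g(x)\le 0$ and $\bar\lambda>0$; as $s\mapsto s^{1-\alpha}$ is nondecreasing on $[0,\infty)$, this gives ${\rm dist}(x,S)\le c'(f(x)-p^*)^{1-\alpha}$ for all $x\in{\rm dom}\,f$ near $\bar x$. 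Feeding this error bound for the proper closed convex function $f$ back into \cite[Theorem~5]{BNPS15} produces the KL property of $f$ at $\bar x$ with exponent $\alpha$, which completes the proof.
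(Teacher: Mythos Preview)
Your proposal is correct and follows essentially the same approach as the paper: reduce to minimizers via Lemma~\ref{lem:prep2}, extract a strictly positive Lagrange multiplier $\bar\lambda$ from Slater plus assumption~(ii), use strict convexity of $l$ to identify $\Argmin f=\Argmin(h+\bar\lambda g+\delta_D)$, and then transfer the error bound via \cite[Theorem~5]{BNPS15} in both directions. The only cosmetic differences are that the paper cites \cite[Theorem~28.1]{Roc70} for the KKT characterization of $\Argmin f$ while you establish both inclusions directly, and the paper phrases the constancy of $Ax$ on $\Argmin\phi$ without spelling out the midpoint argument.
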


\begin{proof}
  In view of Lemma~\ref{lem:prep2} and the convexity of $f$, we only need to consider the case $\Argmin f\neq \emptyset$ and look at those $\bar x$ with $0 \in \partial f(\bar x)$ (and so, $\bar x \in \Argmin f$ by the convexity of $f$). From now on, fix any such $\bar x$.

  First, from condition (i) and \cite[Corollary~28.2.1]{Roc70}, there exists $\lambda \ge 0$ so that
  \begin{equation}\label{eq:La1}
  f(\bar x) = \inf_{x\in \R^n} f = h(\bar x) = \inf_{x\in C\cap D} h(x) = \inf_{x\in D} \{h(x) + \lambda g(x)\} \le h(\bar x) + \lambda g(\bar x) \le h(\bar x),
  \end{equation}
  where the first inequality follows from $\bar x\in C\cap D$, while the second inequality follows from the fact that $\bar x\in C$ and hence $g(\bar x)\le 0$. Thus, equality holds throughout \eqref{eq:La1}; in particular, we have $h(\bar x) + \lambda g(\bar x) = h(\bar x)$, which gives $\lambda g(\bar x) = 0$. Also, in view of the fourth equality in \eqref{eq:La1} and condition (ii), we must have $\lambda > 0$; consequently, we have $g(\bar x) = 0$. Fix any such $\lambda > 0$. Then, in view of \cite[Theorem~28.1]{Roc70}, we also have
  \begin{equation}\label{barxin}
  \bar x \in \Argmin f = \{x:\;g(x)=0\}\cap \Argmin (h + \lambda g + \delta_D).
  \end{equation}

  Next, since $h(x) = l(Ax)$ for some strictly convex function $l$, it must hold true that $Ax$ is constant for any $x\in \Argmin (h + \lambda g + \delta_D)$. Since $\lambda > 0$, we deduce that $g(x)$ is constant over $\Argmin (h + \lambda g + \delta_D)$. Hence, in view of \eqref{barxin}, we have $g(x) = g(\bar x) = 0$ for any $x\in \Argmin (h + \lambda g + \delta_D)$. Then we conclude further from \eqref{barxin} that
  \begin{equation}\label{equation4}
  \Argmin f = \{x:\;g(x)=0\}\cap \Argmin (h + \lambda g + \delta_D) = \Argmin (h + \lambda g + \delta_D).
  \end{equation}

  Now, using condition (iii) and \cite[Theorem~5(i)]{BNPS15}, and noting that $\bar x\in {\rm dom}\,\partial(h+\lambda g + \delta_D)$, we see that there exist $\epsilon$, $\nu$ and $c > 0$ so that
  \begin{equation}\label{KL:Lagrange}
    {\rm dist}(x,\Argmin (h + \lambda g + \delta_D)) \le c (h(x) + \lambda g(x) - h(\bar x))^{1- \alpha}
  \end{equation}
  whenever $\|x - \bar x\|\le \epsilon$, $x\in D$ and $h(\bar x) \le h(x) + \lambda g(x) < h(\bar x) + \nu$, because $g(\bar x) = 0$. On the other hand, whenever $h(\bar x) < h(x) < h(\bar x) + \nu$ and $x\in C\cap D$, we have
  \[
  h(\bar x) = \inf_{x\in D}\{h(x) + \lambda g(x)\} \le h(x) + \lambda g(x) \le h(x) < h(\bar x) + \nu,
  \]
  where the equality follows from \eqref{eq:La1} and the second inequality follows from the definition of $C$. Combining this with \eqref{KL:Lagrange}, we see that whenever $x\in C\cap D$, $\|x - \bar x\|\le \epsilon$ and $h(\bar x) < h(x) < h(\bar x) + \nu$, we have
  \begin{equation*}
    \begin{split}
      {\rm dist}(x,\Argmin f) & =  {\rm dist}(x,\Argmin (h + \lambda g + \delta_D)) \le c (h(x) + \lambda g(x) - h(\bar x))^{1- \alpha} \\
      &\le c(h(x) - h(\bar x))^{1- \alpha} =  c(f(x) - f(\bar x))^{1- \alpha},
    \end{split}
  \end{equation*}
  where the first equality follows from \eqref{equation4} and the second inequality follows from the definition of $C$. The conclusion of the theorem now follows from this last relation and \cite[Theorem~5(ii)]{BNPS15}.
\end{proof}

In our next result, Theorem~\ref{thm:extra}, we study the KL property of $F(x,y):= f(x) + \frac\beta{2}\|x - y\|^2$ for any $\beta > 0$ when $f$ is a KL function. The function $F$ is used in the convergence analysis of various first-order methods whose iterates involve {\em momentum} terms; see, for example, \cite{BotCse15,OCBP14} for convergence analysis of some inertial proximal algorithms, and \cite{Chambolle2014,Johnstone2015} for convergence analysis of the proximal gradient algorithm with extrapolation. Theorem~\ref{thm:extra} will be used to analyze the convergence rate of an inertial proximal algorithm, the iPiano \cite{OCBP14}, with constant step-sizes, in Theorem \ref{thm:IP}.

We start with the following simple lemma.

\begin{lemma}\label{lem:prep3}
  Let $\gamma \in (1,2]$. Then there exist $\eta_1 > 0$ and $\eta_2\in (0,1)$ so that
  \[
  \|a + b\|^\gamma \ge \eta_1\|a\|^\gamma - \eta_2\|b\|^\gamma
  \]
  for any $a$, $b\in \R^n$.
\end{lemma}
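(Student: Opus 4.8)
The plan is to prove the inequality by a simple dichotomy that compares the size of $\|a+b\|$ with a suitably scaled copy of $\|a\|$. I would fix a threshold parameter $\theta\in(0,\frac12)$ to be specified later, and split into two cases according to whether $\|a+b\|\ge\theta\|a\|$ or $\|a+b\|<\theta\|a\|$. The whole point of the argument is that in each case the desired bound becomes almost immediate, and the only real work is choosing $\theta$, $\eta_1$, $\eta_2$ consistently across the two cases.

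In the first case, since $t\mapsto t^\gamma$ is increasing on $[0,\infty)$ (as $\gamma>0$), I immediately get $\|a+b\|^\gamma\ge\theta^\gamma\|a\|^\gamma$. Setting $\eta_1=\theta^\gamma$ and using $-\eta_2\|b\|^\gamma\le 0$ for any $\eta_2>0$, the target inequality $\|a+b\|^\gamma\ge\eta_1\|a\|^\gamma-\eta_2\|b\|^\gamma$ then holds regardless of the value of $\eta_2$. In the second case, the triangle inequality gives $\|a\|\le\|a+b\|+\|b\|<\theta\|a\|+\|b\|$, whence $\|a\|<(1-\theta)^{-1}\|b\|$ and therefore $\|a\|^\gamma<(1-\theta)^{-\gamma}\|b\|^\gamma$. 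Since $\|a+b\|^\gamma\ge 0$, it now suffices to arrange that the right-hand side $\eta_1\|a\|^\gamma-\eta_2\|b\|^\gamma$ is nonpositive, which will follow once $\eta_1(1-\theta)^{-\gamma}\le\eta_2$, i.e. once $\eta_2\ge(\theta/(1-\theta))^\gamma$ given the choice $\eta_1=\theta^\gamma$ forced by the first case.

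The one point requiring care — and the only place where the requirement $\eta_2\in(0,1)$ rather than merely $\eta_2>0$ actually bites — is reconciling the two cases with a single admissible $\eta_2$. The naive threshold $\theta=\frac12$ would force $\eta_2\ge 1$, which is not allowed; this is precisely why I insist on $\theta<\frac12$, so that $\theta/(1-\theta)<1$ and hence $(\theta/(1-\theta))^\gamma<1$. Concretely, taking $\theta=\frac13$ yields $\eta_1=3^{-\gamma}$ and $\eta_2=2^{-\gamma}$, the latter lying in $(0,1)$ because $\gamma>0$; a direct check confirms the second-case estimate $3^{-\gamma}\|a\|^\gamma<2^{-\gamma}\|b\|^\gamma$, so that the right-hand side is indeed negative there.

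I expect the main (and only) obstacle to be this bookkeeping of constants, not any genuine analytic difficulty: the challenge is to keep the threshold strictly below $\frac12$ so that the resulting $\eta_2$ stays below $1$. I would also note in passing that the argument never uses $\gamma\le 2$, and in fact needs only $\gamma>0$; the stated range $\gamma\in(1,2]$ is simply the regime in which the lemma is applied later.
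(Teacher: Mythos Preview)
Your proof is correct, but it follows a different path from the paper's own argument. The paper exploits the convexity of $x\mapsto\|x\|^\gamma$ on $\R^n$ for $\gamma\in(1,2]$: writing $a=\lambda\cdot\frac{a+b}{\lambda}+(1-\lambda)\cdot\frac{-b}{1-\lambda}$ for a fixed $\lambda\in(0,\tfrac12)$ and applying Jensen's inequality gives $\|a\|^\gamma\le\lambda^{1-\gamma}\|a+b\|^\gamma+(1-\lambda)^{1-\gamma}\|b\|^\gamma$, which rearranges to the claimed bound with $\eta_1=\lambda^{\gamma-1}$ and $\eta_2=(\lambda/(1-\lambda))^{\gamma-1}\in(0,1)$. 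Your dichotomy on $\|a+b\|\gtrless\theta\|a\|$ is more elementary---it uses only the triangle inequality and monotonicity of $t\mapsto t^\gamma$---and, as you correctly observe, it goes through for every $\gamma>0$ rather than just $\gamma\ge 1$. The paper's route is a touch slicker when convexity is available, but your argument is self-contained and slightly more general; both hinge on the same numerical constraint (keeping the free parameter below $\tfrac12$) to force $\eta_2<1$.
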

\begin{proof}
  Notice that $x \mapsto \|x\|^\gamma$ is convex because $\gamma \in (1,2]$. Fix any $\lambda \in (0,\frac12)$. Then we have from convexity that
  \[
  \|a\|^\gamma = \left\|\lambda \frac{a + b}{\lambda} + (1-\lambda)\frac{-b}{(1-\lambda)}\right\|^\gamma \le \lambda^{1-\gamma}\|a + b\|^\gamma + (1-\lambda)^{1-\gamma}\|b\|^\gamma.
  \]
  Rearranging terms, we obtain further that
  \[
  \|a + b\|^\gamma \ge \lambda^{\gamma-1}\|a\|^\gamma - \left(\frac{\lambda}{1-\lambda}\right)^{\gamma-1}\|b\|^\gamma.
  \]
  The proof is completed upon noting that $\frac{\lambda}{1-\lambda} \in (0,1)$, since $\lambda \in (0,\frac12)$.
\end{proof}

\begin{theorem}{\bf (Exponent for a potential function for iPiano)}\label{thm:extra}
  Suppose that $f$ is a proper closed function that has the KL property at $\bar x\in {\rm dom}\,\partial f$ with an exponent of $\alpha\in [\frac12,1)$ and $\beta > 0$. Consider the function $F(x,y) := f(x) + \frac\beta{2}\|x - y\|^2$. Then the function $F$ has the KL property at $(\bar x,\bar x)$ with an exponent of $\alpha$.
\end{theorem}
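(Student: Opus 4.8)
The plan is to reduce everything to the KL inequality already available for $f$ at $\bar x$, after recording how the distance ${\rm dist}(0,\partial F(x,y))$ decomposes. First I would note $F(\bar x,\bar x)=f(\bar x)$ and, since $\frac\beta2\|x-y\|^2$ is smooth, apply the sum rule \cite[Exercise~8.8(c)]{Rock98} to get
\[
\partial F(x,y)=\{(\xi+w,\,-w):\ \xi\in\partial f(x)\},\qquad w:=\beta(x-y).
\]
In particular $(\bar x,\bar x)\in{\rm dom}\,\partial F$. Because the second block $-w$ does not depend on $\xi$, minimizing the norm over $\xi$ gives the clean identity
\[
{\rm dist}(0,\partial F(x,y))^2=\|w\|^2+{\rm dist}(-w,\partial f(x))^2 .
\]

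Two consequences of this identity carry the whole argument. The elementary one is ${\rm dist}(0,\partial F(x,y))\ge\|w\|$. The key one is the comparison ${\rm dist}(0,\partial F(x,y))\ge\frac12\,{\rm dist}(0,\partial f(x))$: from the identity we have ${\rm dist}(0,\partial F(x,y))\ge{\rm dist}(-w,\partial f(x))$, and the reverse triangle inequality for distance to a set gives ${\rm dist}(-w,\partial f(x))\ge{\rm dist}(0,\partial f(x))-\|w\|$; adding this to ${\rm dist}(0,\partial F(x,y))\ge\|w\|$ yields $2\,{\rm dist}(0,\partial F(x,y))\ge{\rm dist}(0,\partial f(x))$. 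Lemma~\ref{lem:prep3}, applied with $a=\xi^*$ the minimizer defining ${\rm dist}(-w,\partial f(x))$ and $b=w$, furnishes an alternative route to the same comparison, at the cost of some $\gamma$-th-power bookkeeping.

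Then I would fix a neighborhood of $(\bar x,\bar x)$ small enough that $\|x-\bar x\|\le\epsilon$ (the KL radius of $f$) and $\|w\|\le1$, and set $A:=f(x)-f(\bar x)$, $B:=\frac1{2\beta}\|w\|^2$, so that $F(x,y)-F(\bar x,\bar x)=A+B$. On the region $A+B>0$ I distinguish two cases. If $A>0$ I invoke the KL inequality for $f$ together with the comparison above to get ${\rm dist}(0,\partial F)\ge\frac{c}{2}A^\alpha$, while $\|w\|\le1$ and $2\alpha\ge1$ give $\|w\|^{2\alpha}\le\|w\|\le{\rm dist}(0,\partial F)$, hence ${\rm dist}(0,\partial F)\ge(2\beta)^\alpha B^\alpha$. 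Summing and using subadditivity $(A+B)^\alpha\le A^\alpha+B^\alpha$ (valid as $\alpha\le1$) produces ${\rm dist}(0,\partial F)\ge C(A+B)^\alpha$. If instead $A\le0$, then $0<A+B\le B$, and the single bound ${\rm dist}(0,\partial F)\ge(2\beta)^\alpha B^\alpha\ge C(A+B)^\alpha$ already suffices.

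The main obstacle is exactly this second case: the quadratic term can push $F$ above $F(\bar x,\bar x)$ even when $f(x)\le f(\bar x)$, so the KL inequality for $f$ is unavailable and the whole lower bound must be supplied by $\|w\|$. This is where the hypothesis $\alpha\ge\frac12$ is essential — it is precisely the condition making $\|w\|\ge\|w\|^{2\alpha}$ for small $\|w\|$, so that the trivial bound ${\rm dist}(0,\partial F)\ge\|w\|$ dominates the required $B^\alpha\sim\|w\|^{2\alpha}$. Keeping the bookkeeping of $\epsilon$, $\nu$ and the final constant $C$ straight across the two cases is the only remaining care.
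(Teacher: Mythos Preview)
Your argument is correct, and it takes a genuinely different route from the paper's proof. The paper works throughout with the $\frac1\alpha$-th powers: it first invokes Lemma~\ref{ainequ} to get ${\rm dist}^{1/\alpha}(0,\partial F(x,y))\ge C_0\big(\|w\|^{1/\alpha}+\inf_\xi\|\xi+w\|^{1/\alpha}\big)$, then applies Lemma~\ref{lem:prep3} with $\gamma=1/\alpha\in(1,2]$ to peel off $\|w\|^{1/\alpha}$ from $\|\xi+w\|^{1/\alpha}$ and recover $C_1\big({\rm dist}^{1/\alpha}(0,\partial f(x))+\|w\|^{1/\alpha}\big)$. Because the KL inequality, once written as ${\rm dist}^{1/\alpha}(0,\partial f(x))\ge c(f(x)-f(\bar x))$, holds trivially when $f(x)\le f(\bar x)$, the paper avoids any case distinction; the assumption $\alpha\ge\frac12$ enters only at the very end via $\|w\|^{1/\alpha}\ge\|w\|^2$ for $\|w\|<1$. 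By contrast, you bypass both auxiliary lemmas entirely: the clean Pythagorean identity for ${\rm dist}(0,\partial F)^2$ together with the reverse triangle inequality already gives $2\,{\rm dist}(0,\partial F)\ge{\rm dist}(0,\partial f(x))$, and then a case split on the sign of $A=f(x)-f(\bar x)$ plus subadditivity of $t\mapsto t^\alpha$ finishes. Your approach is more elementary and makes the role of $\alpha\ge\frac12$ equally transparent; the paper's approach is more unified, handling both signs of $A$ in a single chain of inequalities at the price of invoking the two preparatory lemmas.
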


\begin{proof}
  Since $f$ has the KL property at $\bar x$ with an exponent of $\alpha\in (0,1)$, there exist $c$, $\epsilon$ and $\nu>0$ so that
  \begin{equation}\label{ineq4}
    {\rm dist}^\frac1{\alpha}(0,\partial f(x)) \ge c (f(x) - f(\bar x))
  \end{equation}
  whenever $x\in {\rm dom}\,\partial f$, $\|x - \bar x\|\le \epsilon$ and $f(x) < f(\bar x)+\nu$, where the condition $f(\bar x)< f(x)$ is dropped because \eqref{ineq4} holds trivially otherwise.
  By shrinking $\epsilon$ further if necessary, we assume that $\epsilon < \frac1{2}$.

  Now, consider any $(x,y)$ satisfying $x\in {\rm dom}\,\partial f$, $\|x - \bar x\|\le \epsilon$, $\|y - \bar x\|\le \epsilon$ and $F(\bar x,\bar x) < F(x,y) < F(\bar x,\bar x) + \nu$. Clearly, any such $(x,y)$ satisfies
  \[
  f(x)\le F(x,y) < F(\bar x,\bar x) +\nu = f(\bar x)+\nu
  \]
  and thus \eqref{ineq4} holds for these $x$.
  Then for some suitable positive constants $C_0$, $C_1$ and $C_2$ (to be specified below), we have for any such $(x,y)$ that
  \begin{equation*}
    \begin{split}
      &{\rm dist}^\frac{1}{\alpha}(0,\partial F(x,y))  \ge C_0(\|\beta(y - x)\|^\frac{1}{\alpha} + \inf_{\xi \in \partial f(x)}\|\xi + \beta(x - y)\|^\frac{1}{\alpha})\\
      & \ge C_0(\|\beta(y - x)\|^\frac{1}{\alpha} + \inf_{\xi \in \partial f(x)}\eta_1\|\xi\|^\frac{1}{\alpha} - \eta_2\|\beta(x - y)\|^\frac{1}{\alpha})\\
      & \ge C_1(\inf_{\xi \in \partial f(x)}\|\xi\|^\frac{1}{\alpha} + \|\beta(y - x)\|^\frac{1}{\alpha}) \ge C_2\left(\inf_{\xi \in \partial f(x)}\|\xi\|^\frac{1}{\alpha} + \frac{\beta c}2\|y - x\|^\frac{1}{\alpha}\right)\\
      & \ge C_2c\left(f(x) - f(\bar x) + \frac\beta2\|y - x\|^\frac{1}{\alpha}\right)\\
      & \ge C_2c\left(f(x) - f(\bar x) + \frac{\beta}{2}\|y - x\|^2\right)  = C_2c(F(x,y) - F(\bar x,\bar x)),
    \end{split}
  \end{equation*}
  where the existence of $C_0>0$ in the first inequality follows from Lemma~\ref{ainequ}; the second inequality follows from Lemma~\ref{lem:prep3} applied to the term $\|\xi + \beta(x - y)\|^\frac{1}{\alpha}$, with $\eta_1$ and $\eta_2$ given by Lemma~\ref{lem:prep3}; the third inequality follows by setting $C_1 = C_0\min\{\eta_1,1-\eta_2\}$; the fourth inequality follows by further shrinking $C_1$, and $c$ is the same number as in \eqref{ineq4}; the fifth inequality follows from \eqref{ineq4}, while the last inequality follows from the assumption on $\alpha$ and the observation that 
  \[
  \|x - y\| \le \|x - \bar x\| + \|y - \bar x\|\le 2\epsilon < 1.
  \]
  This completes the proof.
\end{proof}

In our last theorem in this section, we examine KL property on a subset, more precisely, a manifold $\frak M$. Roughly speaking, we show that under partial smoothness and some additional assumptions, one only needs to verify the KL property along $\frak M$ in order to establish the property at a point $\bar x$. Before stating the result, we recall some necessary definitions.
First, following \cite[Definition~13.27]{Rock98}, a proper closed function $f$ is said to be prox-regular at a point $\ox$ for a subgradient $\ov\in \partial f(\ox)$ if $\ox\in {\rm dom}\,f$ and there
exists $\rho \ge 0$ such that
\[
f(x') \ge f(x) + \langle v, x' - x \rangle-\frac{\rho}{2}\|x' - x\|^2,
\]
whenever $x$ and $x'$ are near $\ox$ with $f(x)$ near $f(\ox)$ and $v \in \partial f(x)$ is near $\ov$. Furthermore, we say that $f$ is
prox-regular at $\ox$ if it is prox-regular at $\ox$ for every $\ov \in \partial f(\ox)$.

Next, let ${\frak M}$ be a ${\cal C}^2$ manifold about $\ox \in {\frak M}$.\footnote{Following \cite{HL04}, this notion means that locally $\frak M$ can be expressed as the solution set of a collection of ${\cal C}^2$ equations with linearly independent gradients.}
For a function $f$ that is ${\cal C}^2$ around $\ox\in\frak M$, the covariant derivative $\nabla_{\frak M}f(\ox)$ is defined as the unique vector in $T_\ox({\frak M})$ with
\[
  \langle\nabla_{\frak M}f(\ox),\xi\rangle = \left.\frac{d}{dt}f(P_{\frak M}(\ox + t\xi))\right|_{t=0}
\]
for any $\xi\in T_\ox({\frak M})$, where $T_\ox({\frak M})$ is the tangent space of $\frak M$ at $\ox$, and $P_{\frak M}$ is the projection onto $\frak M$; this latter operation is well defined in a sufficiently small neighborhood (in $\R^n$) of $\ox\in {\frak M}$.

Finally, we recall the notion of partial smoothness. From \cite[Definition 2.3]{HL04} (see also \cite[Definition~2.7]{Lewis02}), a function
$f$ is ${\cal C}^2$-partly smooth at $\ox$ relative to ${\frak M}$ if the
following four properties hold:
\begin{itemize}
\item[(i)] (restricted smoothness) the restriction of $f$ on ${\frak M}$  is a ${\cal C}^2$ function near $\ox$;
\item[(ii)] (regularity) at every point in ${\frak M}$ close to $\ox$, the function $f$ is regular and $\partial f(\ox)\neq \emptyset$;
\item[(iii)] (normal sharpness) the affine span of $\partial f(\ox)$ is a translate of the limiting normal cone $N_{{\frak M}}(\ox)$;
\item[(iv)] (subgradient continuity) the subdifferential mapping $\partial f$ restricted to ${\frak M}$ is continuous at $\ox$.
\end{itemize}
The class of prox-regular and ${\cal C}^2$-partly smooth function is a broad class of functions. For example, as pointed out in \cite[Example 2.5]{HL04},
any function $f$ that can be expressed as $f(x)=\max\{f_i(x): 1 \le i \le p\}$ for some $p \in \mathbb{N}$ and ${\cal C}^2$ functions $f_i$ is prox-regular. Moreover, in the same example, the authors also demonstrated that, if $\{\nabla f_i(\bar x)\}_{1 \le i \le p}$ is linearly independent, then $f$ is  ${\cal C}^2$-partly smooth at $\ox$ relative to ${\frak M}=\{x: I_f(x)=I_f(\bar x)\}$ where
$I_f(x)=\{i \in \{1,\ldots,p\}: f(x)=f_i(x)\}$.

\begin{theorem}{\bf (Exponent for partly smooth KL functions)}\label{thm:partlysmooth}
  Suppose that $f$ is a proper closed function that is prox-regular at $\bar x$ and ${\cal C}^2$-partly smooth at $\bar x$ relative to a manifold ${\frak M}$. Suppose further that $0\in {\rm ri}\,\partial f(\bar x)$ and that there exist $c$, $\nu$, $\epsilon > 0$ and $\alpha\in [0,1)$ so that
  \[
  \|\nabla_{\frak M}f(x)\|\ge c (f(x) - f(\bar x))^\alpha
  \]
  whenever $x\in \frak M$ with $\|x - \bar x\|\le \epsilon$ and $f(\bar x) < f(x) < f(\bar x) + \nu$.
  Then $f$ has the KL property at $\bar x$ with an exponent of $\alpha$.
\end{theorem}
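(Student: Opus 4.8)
The plan is to use the hypothesized inequality, which only controls $f$ along the manifold $\frak M$, to produce the full KL estimate $\mathrm{dist}(0,\partial f(x)) \ge c'(f(x)-f(\bar x))^\alpha$ for \emph{all} $x$ in a neighborhood of $\bar x$ in $\R^n$ with $f(\bar x) < f(x) < f(\bar x)+\nu'$. The driving mechanism is a dichotomy on the size of $\mathrm{dist}(0,\partial f(x))$: either it is bounded below by a fixed positive constant, in which case the estimate holds trivially once $\nu'$ is taken small (exactly as in the proof of Lemma~\ref{lem:prep2}), or it is small, and in the latter case I want to force $x$ to lie on $\frak M$ so that the hypothesized inequality applies. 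Thus the two technical ingredients I need are (a) an \emph{identification} property sending points carrying small subgradients onto $\frak M$, and (b) a comparison between $\mathrm{dist}(0,\partial f(x))$ and $\|\nabla_{\frak M}f(x)\|$ for $x\in\frak M$.

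For ingredient (a) I would invoke the finite identification theory for prox-regular, $\C^2$-partly smooth functions from \cite{HL04}. The nondegeneracy hypothesis $0\in\mathrm{ri}\,\partial f(\bar x)$ is precisely the condition guaranteeing identification with limiting subgradient value $0$: whenever $x^k\to\bar x$, $f(x^k)\to f(\bar x)$, and there are $v^k\in\partial f(x^k)$ with $v^k\to 0$, one has $x^k\in\frak M$ for all large $k$. From this sequential statement I would extract a uniform version by the same contradiction argument used in Lemma~\ref{lem:prep2}: there exist $\epsilon_1,\delta_1>0$ such that every $x$ with $\|x-\bar x\|\le\epsilon_1$, $f(\bar x)<f(x)<f(\bar x)+\delta_1$ and $\mathrm{dist}(0,\partial f(x))\le\delta_1$ necessarily lies in $\frak M$. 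This identification step is the crux of the argument, and it is where all the partial-smoothness and prox-regularity structure is consumed; I expect it to be the main obstacle, both in importing the precise hypotheses of \cite{HL04} and in making the statement uniform rather than sequential.

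For ingredient (b), $\C^2$-partial smoothness ensures that for $x\in\frak M$ near $\bar x$ the function $f|_{\frak M}$ is $\C^2$ with Riemannian gradient $\nabla_{\frak M}f(x)$, and, by normal sharpness propagated to nearby points of $\frak M$, the affine hull of $\partial f(x)$ is a translate of the normal cone $N_{\frak M}(x)=T_x(\frak M)^\perp$. Consequently the orthogonal projection of $\partial f(x)$ onto $T_x(\frak M)$ collapses to a single tangent vector, and evaluating directional derivatives along curves in $\frak M$ identifies that vector as $\nabla_{\frak M}f(x)$; that is, $P_{T_x(\frak M)}(v)=\nabla_{\frak M}f(x)$ for every $v\in\partial f(x)$. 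Hence $\|v\|\ge\|P_{T_x(\frak M)}(v)\|=\|\nabla_{\frak M}f(x)\|$ for all such $v$, giving $\mathrm{dist}(0,\partial f(x)) \ge \|\nabla_{\frak M}f(x)\|$ for $x\in\frak M$ near $\bar x$.

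Finally I would combine these pieces. Shrink $\epsilon'\le\epsilon_1$ and $\nu'\le\min\{\delta_1,\nu\}$ so that both the hypothesized inequality and the identification neighborhood apply, and fix $x$ with $\|x-\bar x\|\le\epsilon'$ and $f(\bar x)<f(x)<f(\bar x)+\nu'$. If $\mathrm{dist}(0,\partial f(x))>\delta_1$, then since $(f(x)-f(\bar x))^\alpha\le(\nu')^\alpha$ the estimate holds with constant $\delta_1/(\nu')^\alpha$. Otherwise $\mathrm{dist}(0,\partial f(x))\le\delta_1$, so identification forces $x\in\frak M$, and ingredient (b) together with the hypothesized inequality yields $\mathrm{dist}(0,\partial f(x))\ge\|\nabla_{\frak M}f(x)\|\ge c(f(x)-f(\bar x))^\alpha$. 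Taking $c'=\min\{c,\,\delta_1/(\nu')^\alpha\}$ then establishes the KL property at $\bar x$ with exponent $\alpha$. Ingredients (b) and this final bookkeeping are routine; the real content is the identification result of step (a).
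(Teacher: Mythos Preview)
Your proposal is correct and follows essentially the same strategy as the paper: both rely on the identification result \cite[Theorem~5.3]{HL04} (your ingredient (a)) and the comparison between ${\rm dist}(0,\partial f(x))$ and $\|\nabla_{\frak M}f(x)\|$ for $x\in\frak M$ (your ingredient (b), which the paper obtains by citing \cite[Proposition~23]{DHM06} as an equality). The only organizational difference is that the paper argues by contradiction, taking a sequence $x^k\to\bar x$ with $f(x^k)\to f(\bar x)$ and ${\rm dist}(0,\partial f(x^k))\to 0$ and applying the sequential form of identification directly, whereas you first extract a uniform identification threshold $\delta_1$ and then run a direct dichotomy; these are equivalent packagings of the same idea.
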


\begin{proof}
  Our proof proceeds by contradiction. Suppose the contrary holds. Then there exists $\{x^k\}\subseteq {\rm dom}\,\partial f$ with $x^k \to \bar x$ and $f(x^k) > f(\bar x)$, $f(x^k)\to f(\bar x)$ such that
  \begin{equation}\label{eq:contra}
  {\rm dist}(0,\partial f(x^k)) < \frac1k (f(x^k) - f(\bar x))^\alpha.
  \end{equation}
  In particular, ${\rm dist}(0,\partial f(x^k))\to 0$. This together with the assumptions on prox-regularity, partial smoothness, $0\in {\rm ri}\,\partial f(\bar x)$ and \cite[Theorem~5.3]{HL04} shows that $x^k\in \frak M$ for all sufficiently large $k$. Thus, by considering even larger $k$ if necessary so that $\|x^k - \bar x\|\le \epsilon$ and $f(\bar x) < f(x^k) < f(\bar x) + \nu$, we conclude from the assumption that
  \[
  \|\nabla_{\frak M}f(x^k)\|\ge c (f(x^k) - f(\bar x))^\alpha
  \]
  for all sufficiently large $k$. Since ${\rm dist}(0,\partial f(x^k))= \|\nabla_{\frak M}f(x^k)\|$ for all sufficiently large $k$ as a consequence of \cite[Proposition~23]{DHM06}, we have obtained a contradiction to \eqref{eq:contra}. This completes the proof.
\end{proof}

\section{Structured problems: Luo-Tseng error bound and KL property}\label{sec:LT}

In this section, we examine structured optimization problems where the objective functions $f$ are proper closed functions taking the following form:
\begin{equation}\label{P1}
  f(x) := h(x) + P(x).
\end{equation}
Here, $h$ is a proper closed (possibly nonconvex) function with an open domain, and is continuously differentiable with a locally Lipschitz continuous gradient on ${\rm dom}\, h$, and $P$ is proper closed convex. Since $f$ is proper closed, we must have ${\rm dom}\,h\cap {\rm dom}\,P\neq \emptyset$.
For this class of functions, the Luo-Tseng error bound \eqref{eq:LTeb} is commonly used in the literature for establishing local linear convergence of various first-order methods applied to minimizing $f$; see, for example, \cite{LuoT92,LuoT92_2,LuoT93,Tse10,TseY09}. In this section, we study the relationship between the Luo-Tseng error bound and the KL property for the class of functions \eqref{P1}, under the following assumption concerning separation of stationary values. Recall that ${\cal X}$ is the set of stationary points of $f$.

\begin{assumption}\label{assum1}
  For any $\bar x\in \cal X$, there exists $\delta > 0$ so that $f(y) = f(\bar x)$ whenever $y\in {\cal X}$ and $\|y - \bar x\|\le \delta$.
\end{assumption}
This assumption is trivially satisfied if $h$ is, in addition, convex. Moreover, a global version of this assumption is commonly used together with the Luo-Tseng error bound for convergence analysis in the literature.

Before proving our main result of this section under Assumption~\ref{assum1}, we first establish the following auxiliary lemma. This lemma is a generalization of \cite[Proposition~1.5.14]{FchP03:vi}, where we have a general proper closed convex function $P$ instead of just the indicator function of a closed convex set.
\begin{lemma}\label{lem1}
  Consider the function $f$ given in \eqref{P1}.
  For any $x \in {\rm dom}\, \partial f$, it holds that
  \begin{equation}\label{eq:key}
  \|{\rm prox}_{P}(x - \nabla h(x)) - x\| \le {\rm dist}(0,\partial f(x)).
  \end{equation}
\end{lemma}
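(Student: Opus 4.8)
The plan is to exploit the nonexpansiveness of the proximal mapping \eqref{eq:prox_nonexpansive} together with the characterization, stated earlier in the text, that $x = {\rm prox}_P(z)$ if and only if $z \in x + \partial P(x)$. First I would set $u := {\rm prox}_P(x - \nabla h(x))$, so that the left-hand side of \eqref{eq:key} is exactly $\|u - x\|$. The goal is then to bound $\|u - x\|$ by $\|v\|$ for an arbitrary $v \in \partial f(x)$ and conclude by taking the infimum over $v$.

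The key observation is that every element of $\partial f(x)$ lets us realize $x$ \emph{itself} as a proximal point of $P$. Since $h$ is continuously differentiable on its open domain, the sum rule \cite[Exercise~8.8(c)]{Rock98} gives $\partial f(x) = \nabla h(x) + \partial P(x)$. Hence, for any $v \in \partial f(x)$ we have $v - \nabla h(x) \in \partial P(x)$, and by the prox characterization this membership is precisely the statement that $x = {\rm prox}_P(x + v - \nabla h(x))$.

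Now both $u$ and $x$ are written as proximal points of the same $P$, namely $u = {\rm prox}_P(x - \nabla h(x))$ and $x = {\rm prox}_P(x + v - \nabla h(x))$. Applying the nonexpansiveness \eqref{eq:prox_nonexpansive} to this pair collapses the difference of the two prox arguments, yielding
\[
\|u - x\| \le \|(x - \nabla h(x)) - (x + v - \nabla h(x))\| = \|v\|.
\]
Since $x \in {\rm dom}\,\partial f$ ensures $\partial f(x)$ is nonempty (and closed), taking the infimum over all $v \in \partial f(x)$ gives $\|u - x\| \le {\rm dist}(0,\partial f(x))$, which is exactly \eqref{eq:key}.

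I do not anticipate a serious obstacle: the argument is short and relies only on facts already recorded in the preliminaries. The one point requiring care is the bookkeeping that converts the membership $v - \nabla h(x) \in \partial P(x)$ into the proximal identity $x = {\rm prox}_P(x + v - \nabla h(x))$, and the recognition that pairing this with $u = {\rm prox}_P(x - \nabla h(x))$ is exactly the configuration in which nonexpansiveness reduces the bound to $\|v\|$.
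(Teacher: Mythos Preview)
Your proposal is correct and follows essentially the same approach as the paper: both arguments use the sum rule $\partial f(x)=\nabla h(x)+\partial P(x)$ to realize $x$ as ${\rm prox}_P(x+v-\nabla h(x))$ for any $v\in\partial f(x)$, and then apply the nonexpansiveness of ${\rm prox}_P$ to the pair $x-\nabla h(x)$ and $x+v-\nabla h(x)$. Your write-up is in fact a bit more streamlined than the paper's, which parametrizes by $z$ with $x={\rm prox}_P(z)$ and then identifies the resulting infimum with ${\rm dist}(0,\partial f(x))$, but the underlying logic is identical.
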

\begin{proof}
  For any $x \in {\rm dom}\, \partial f$, we have $\partial f(x)\neq \emptyset$. Since $\partial f(x) = \nabla h(x) + \partial P(x)$ thanks to \cite[Exercise~8.8(c)]{Rock98}, we must then have $\partial P(x)\neq \emptyset$. Consequently, the set $x + \partial P(x)$ is nonempty. Using the definition of proximal mapping, this means that there exists $z$ so that $x = {\rm prox}_{P}(z)$.

  Now, for any $x \in {\rm dom}\, \partial f$, let $z$ be such that $x = {\rm prox}_{P}(z)$. Then
  \begin{equation*}
    \begin{split}
      \|{\rm prox}_{P}(x - \nabla h(x)) - x\|&= \|{\rm prox}_{P}({\rm prox}_{P}(z) - \nabla h({\rm prox}_{P}(z))) - {\rm prox}_{P}(z)\|\\
      & \le \|{\rm prox}_{P}(z) - \nabla h({\rm prox}_{P}(z)) - z\|,
    \end{split}
  \end{equation*}
  where the inequality follows from \eqref{eq:prox_nonexpansive}. Consequently, we have
  \begin{equation}\label{eq:inf1}
  \|{\rm prox}_{P}(x - \nabla h(x)) - x\|
  \le \inf_{z}\{\|{\rm prox}_{P}(z) - \nabla h({\rm prox}_{P}(z)) - z\|:\; x = {\rm prox}_{P}(z)\}.
  \end{equation}
  On the other hand, observe that we have
  \begin{equation}\label{eq1}
    \begin{split}
      &{\rm dist}(0,\partial f(x))  = \inf_\xi\{ \|\nabla h(x) + \xi\| :\; \xi \in \partial P(x)\}
      = \inf_\xi\{\|\nabla h(x) - x + \underbrace{x + \xi}_{z}\| :\; \xi \in \partial P(x)\}\\
      & = \inf_z \{ \|\nabla h(x) - x + z\| :\; z \in x + \partial P(x)\} = \inf_z \{ \|\nabla h(x) - x + z\| :\; x = {\rm prox}_{P}(z)\}\\
      & = \inf_{z} \{ \|\nabla h({\rm prox}_{P}(z)) - {\rm prox}_{P}(z) + z\|:\; x = {\rm prox}_{P}(z)\},
    \end{split}
  \end{equation}
  where the fourth equality follows from the definition of the proximal mapping. The desired inequality \eqref{eq:key} now follows by combining \eqref{eq:inf1} and \eqref{eq1}.
\end{proof}

Using the above lemma, we can now prove the following result, which states that if the Luo-Tseng error bound and Assumption~\ref{assum1} hold, then $f$ is a KL function with an exponent of $\frac12$.
\begin{theorem}{\bf (Luo-Tseng error bound implies KL)}\label{LT_to_KL}
  Suppose that ${\cal X}\neq \emptyset$, and that Assumption~\ref{assum1} and the Luo-Tseng error bound hold. Then $f$ in \eqref{P1} is a KL function with an exponent of $\frac12$.
\end{theorem}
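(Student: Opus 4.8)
The plan is to fix an arbitrary $\bar x\in{\rm dom}\,\partial f$ and verify the defining inequality \eqref{eq:00} with $\alpha=\frac12$ at $\bar x$; since $\bar x$ is arbitrary, this shows $f$ is a KL function with exponent $\frac12$. First, if $0\notin\partial f(\bar x)$, then $\bar x\notin{\cal X}$ and Lemma~\ref{lem:prep2} immediately gives the KL property at $\bar x$ with any exponent in $[0,1)$, in particular $\frac12$. Hence the real work lies in the case $\bar x\in{\cal X}$, i.e.\ $0\in\partial f(\bar x)$, where the goal is a bound ${\rm dist}(0,\partial f(x))\ge c'(f(x)-f(\bar x))^{1/2}$ on a neighborhood of $\bar x$. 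I would obtain this by proving the equivalent quadratic growth estimate $f(x)-f(\bar x)\le C\,{\rm dist}(0,\partial f(x))^2$ for $x$ near $\bar x$ and then taking square roots.

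To build this estimate I first collect the local data: by Assumption~\ref{assum1} pick $\delta>0$ separating stationary values around $\bar x$; since ${\rm dom}\,h$ is open and $\nabla h$ is locally Lipschitz, fix $\rho,L>0$ with $\nabla h$ being $L$-Lipschitz on $B(\bar x,\rho)\subseteq{\rm dom}\,h$; and invoke the Luo-Tseng error bound with $\zeta=f(\bar x)+1$ to get $c,\epsilon_{\rm LT}>0$, shrinking $\epsilon_{\rm LT}$ (keeping the same $c$) so that $c\,\epsilon_{\rm LT}<\min\{\rho,\delta\}/2$. Now take $x\in{\rm dom}\,\partial f$ near $\bar x$ and write $d:={\rm dist}(0,\partial f(x))$; the decisive case is $d<\epsilon_{\rm LT}$. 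Then Lemma~\ref{lem1} gives $\|{\rm prox}_P(x-\nabla h(x))-x\|\le d<\epsilon_{\rm LT}$, so the Luo-Tseng error bound applies and yields ${\rm dist}(x,{\cal X})\le c\,d$. Let $\bar z$ be a projection of $x$ onto ${\cal X}$ (attained, since near $\bar x\in{\rm dom}\,h$ the set ${\cal X}$ is the locally closed solution set of the continuous equation ${\rm prox}_P(x-\nabla h(x))=x$); by the choice of $\epsilon_{\rm LT}$ and the neighborhood radius, $\bar z$ lies within $\delta$ of $\bar x$, whence Assumption~\ref{assum1} forces $f(\bar z)=f(\bar x)$. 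Choosing $\xi\in\partial P(x)$ with $\|\nabla h(x)+\xi\|=d$, the descent lemma for $h$ together with the subgradient inequality for the convex $P$ at $x$ give
\[
f(x)-f(\bar z)\le \langle\nabla h(\bar z)+\xi,\,x-\bar z\rangle+\tfrac{L}{2}\|x-\bar z\|^2,
\]
and splitting $\nabla h(\bar z)+\xi=(\nabla h(\bar z)-\nabla h(x))+(\nabla h(x)+\xi)$ and bounding the two inner products by $L\|x-\bar z\|^2$ (Lipschitzness) and $d\,\|x-\bar z\|$ (definition of $d$), I obtain, using $\|x-\bar z\|={\rm dist}(x,{\cal X})\le c\,d$ and $f(\bar z)=f(\bar x)$, that $f(x)-f(\bar x)\le\frac{3L}{2}\|x-\bar z\|^2+d\,\|x-\bar z\|\le(\frac{3L}{2}c^2+c)\,d^2$, which is the desired quadratic growth bound with $C=\frac{3L}{2}c^2+c$.

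It then remains to package this uniformly. On $\|x-\bar x\|\le\epsilon'$ with $\epsilon'\le\min\{\rho,\delta\}/2$ and $f(\bar x)<f(x)<f(\bar x)+\nu$ with $\nu\le1$ (so $f(x)\le\zeta$), I split according to $d\ge\epsilon_{\rm LT}$ or $d<\epsilon_{\rm LT}$. In the latter the bound above gives $d\ge C^{-1/2}(f(x)-f(\bar x))^{1/2}$; in the former $(f(x)-f(\bar x))^{1/2}<\nu^{1/2}$ makes the inequality hold once the final constant is taken to be $c':=\min\{C^{-1/2},\,\epsilon_{\rm LT}\nu^{-1/2}\}$, and points $x\notin{\rm dom}\,\partial f$ are trivial since then ${\rm dist}(0,\partial f(x))=+\infty$. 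The main obstacle is precisely the quadratic growth estimate: one must control $f(x)-f(\bar x)$ by $d^2$ even though $h$ is merely $C^1$ with Lipschitz gradient (so only a one-sided descent inequality is available) and $P$ is nonsmooth. The resolution is to treat $h$ via the descent lemma, treat $P$ via a single subgradient inequality at $x$, absorb the cross term $\langle\nabla h(\bar z)-\nabla h(x),x-\bar z\rangle$ through Lipschitz continuity of $\nabla h$, and crucially use Assumption~\ref{assum1} to guarantee $f(\bar z)=f(\bar x)$ so that the nearby stationary point $\bar z$ can legitimately replace $\bar x$ as the base point of the expansion.
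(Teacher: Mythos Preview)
Your proposal is correct and follows essentially the same route as the paper: reduce to $\bar x\in{\cal X}$ via Lemma~\ref{lem:prep2}, use Assumption~\ref{assum1} to replace $f(\bar x)$ by $f(\bar z)$ at a nearby projection $\bar z\in{\rm Proj}_{\cal X}(x)$, combine the descent lemma for $h$ with the convex subgradient inequality for $P$, and then invoke the Luo-Tseng bound together with Lemma~\ref{lem1} to control $\|x-\bar z\|$ by ${\rm dist}(0,\partial f(x))$. The only cosmetic differences are that the paper shrinks the neighborhood so that the prox-residual is automatically small (avoiding your case split on $d\ge\epsilon_{\rm LT}$), and it is slightly more careful about attainment of the projection, arguing via the closure $\bar{\cal X}$ and showing ${\rm Proj}_{\bar{\cal X}}(x)={\rm Proj}_{\cal X}(x)$ near $\bar x$; your ``locally closed'' remark amounts to the same thing.
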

\begin{proof}
From Lemma \ref{lem:prep2}, we only need to show that $f$ has the KL property at any $\bar x\in \cal X$ with an exponent of $\frac12$. To see this, fix any $\bar x\in {\cal X}$.
Let $\delta > 0$ be defined as in Assumption~\ref{assum1} and take any $\epsilon \in(0,\frac{\delta}{4})$ such that $B(\bar x,2\epsilon)\subset {\rm dom}\,h$ (this is possible as ${\rm dom}\,h$ is open and $\bar x \in {\rm dom}\, h$). Then for any $x$ with $\|x - \bar x\|\le \epsilon$,
  we have
  \[
  \|u - \bar x\| \le \|u - x\| + \|x - \bar x\|= {\rm dist}(x,{\cal X})+ \|x - \bar x\| \le 2\|x - \bar x\| \le 2\epsilon < \frac\delta{2} < \delta
  \]
  whenever $u \in {\rm Proj}_{\bar{\cal X}}(x)$. From this, one can argue that ${\rm Proj}_{\bar{\cal X}}(x) = {\rm Proj}_{\cal X}(x)$ for these $x$. Indeed, if $u \in {\rm Proj}_{\bar{\cal X}}(x)\subseteq \bar{\cal X}$, then there exist $u^k \to u$ satisfying $0 \in \nabla h(u^k) + \partial P(u^k)$ for all $k$. Using this, $u \in B(\bar x,2\epsilon)\subset {\rm dom}\,h$ and the closedness of $\partial P$, we conclude that $u \in \cal X$. This proves ${\rm Proj}_{\bar{\cal X}}(x) = {\rm Proj}_{\cal X}(x)$ since ${\rm Proj}_{\bar{\cal X}}(x) \supseteq {\rm Proj}_{\cal X}(x)$ holds trivially. Thus, it holds that $f(u) = f(\bar x)$ for any $u \in {\rm Proj}_{\cal X}(x)$ whenever $\|x - \bar x\|\le \epsilon$.
On the other hand, notice that $B(\bar x,2\epsilon)$ is a compact subset of ${\rm dom}\,h$ and $\nabla h$ is locally Lipschitz on ${\rm dom}\,h$. Consequently, $\nabla h$ is {\em globally} Lipschitz continuous on $B(\bar x,2\epsilon)$; we denote its Lipschitz constant by $L > 0$.

Next, from the assumption on Luo-Tseng error bound, we see that for $\zeta = f(\bar x)+1$, there exist $c_1$, $\epsilon_1 > 0$ so that
  \begin{equation}\label{copyerrorbd}
  {\rm dist}(x,{\cal X}) \le c_1\|{\rm prox}_P(x - \nabla h(x)) - x\|
  \end{equation}
  whenever $\|{\rm prox}_P(x - \nabla h(x)) - x\| < \epsilon_1$ and $f(x)\le f(\bar x) + 1$. Since $\|{\rm prox}_P(x - \nabla h(x)) - x\|< \epsilon_1$ is a neighborhood of ${\cal X}$ containing $\bar x$, by shrinking $\epsilon$ if necessary, we may assume that \eqref{copyerrorbd} holds whenever $\|x - \bar x\|\le \epsilon$ and $f(x)\le f(\bar x) + 1$.

  From the above discussions, for any $x\in {\rm dom}\,\partial f$ with $f(\bar x) < f(x) < f(\bar x) + 1$ and $\|x-\bar x\| \le \epsilon$, we have $f(\bar x) = f(u)$ for any $u\in {\rm Proj}_{\cal X}(x)$, and
  \[
  \begin{split}
    &f(x) - f(\bar x) = f(x) - f(u) = h(x) - h(u) + P(x) - P(u)\\
    & \le \langle\nabla h(u),x - u\rangle + \frac{L}{2}\|x - u\|^2 + \langle \xi, x - u\rangle\\
    & = \langle\nabla h(u) - \nabla h(x),x - u\rangle + \frac{L}{2}\|x - u\|^2 + \langle  \nabla h(x) + \xi, x - u\rangle\\
    & \le \frac{3L}2\|x - u\|^2 + \|\nabla h(x) + \xi\|\cdot\|x - u\|\\
    & = \frac{3L}2{\rm dist}^2(x,{\cal X}) + \|\nabla h(x) + \xi\|\cdot{\rm dist}(x,{\cal X})\\
    & \le C_0(\|{\rm prox}_P(x - \nabla h(x)) - x\|^2 + \|\nabla h(x) + \xi\|\cdot \|{\rm prox}_P(x - \nabla h(x)) - x\|)
  \end{split}
  \]
  for any $\xi \in \partial P(x)$, where the first inequality is a consequence of the subgradient inequality applied to $P$ and the fact that $\nabla h$ is Lipschitz continuous with a Lipschitz constant of $L$ on $B(\bar x,2\epsilon)$, which contains both $x$ and $u$, the last equality follows from the definition of $u$, while the last inequality follows from \eqref{copyerrorbd} for some suitable $C_0 > 0$. Taking infimum over all possible $\xi \in \partial P(x)$ and invoking Lemma~\ref{lem1}, we see further that
  \[
  f(x) - f(\bar x)  \le C_1[{\rm dist}(0,\partial f(x))]^2
  \]
  for some $C_1 > 0$. This completes the proof.
\end{proof}

Before ending this section, we discuss some immediate applications of Theorem~\ref{LT_to_KL}. The first application involves piecewise linear-quadratic (PLQ) functions\footnote{Recall that a proper closed function $F$ is called piecewise linear-quadratic \cite[Definition~10.20]{Rock98} if ${\rm dom}\,F$ can be represented as the union of finitely many polyhedrons, relative to each of which $F(x)$ is given by the form $\frac{1}{2}x^TMx+a^Tx+\alpha$, where $M \in {\cal S}^n$, $a \in \mathbb{R}^n$ and $\alpha \in \mathbb{R}$.} 
\begin{equation}\label{eq:structure0}
f(x)=l(A x)+P (x),
\end{equation}
where $l$ is strongly convex on any compact convex set and is twice continuously differentiable, $A\in \R^{m\times n}$ and $P$ is a convex PLQ function on $\mathbb{R}^n$. We show in the next proposition that the $f$ in \eqref{eq:structure0} is a KL function with an exponent of $\frac12$.
  We would also like to point out that, in the special case of $\ell_1$-regularized least squares function (that is,
   $f(x)=l(Ax)+P(x)$ where $l(y)=\frac12\|y-b\|^2$ and $P(x)=\mu\sum_{i=1}^n|x_i|$ for some $\mu > 0$), it was known in \cite[Section 3.2.1]{BNPS15} that $f$ is a KL function with an exponent of $\frac12$.

  To achieve our promised result, we will need the notions of metric subregularity and calmness for set-valued mappings.
  Recall that a set-valued mapping $G:\mathbb{R}^n \rightarrow 2^{\mathbb{R}^m}$ is said to be {\rm metrically subregular} \cite[Page~183]{Donchev} at $\ox$ for $\oy$ if $\oy\in G(\ox)$, and there exist a constant $\kappa>0$, a neighborhood $U$ of $\ox$ and a neighborhood $V$ of $\oy$ such that
\begin{equation*}
{\rm dist}(x,G^{-1}(\oy))\le \kappa {\rm dist}(\oy, G(x) \cap V)\quad \mbox{for all}\quad x\in U,
\end{equation*}
A property closely related to metric subregularity is the notion of calmness (see \cite[Page~182]{Donchev}). A mapping $S : \R^m \rightarrow
2^{\R^n}$ is said to be calm at $\oy$ for $\ox$ if $\ox \in S(\oy)$, and
there is a constant $r> 0$ along with neighborhoods $U$ of $\ox$ and $V$ of $\oy$ such that
\[
S(y)\cap U \subseteq S(\oy)+r \|y-\oy\| \mathbb{B}_{\mathbb{R}^n} \mbox{ for all }y \in V,
\]
where $\mathbb{B}_{\mathbb{R}^n}$ is the closed unit ball in $\R^n$.
It is known that a set-valued mapping is calm if and only if its inverse mapping is metrically sub-regular \cite[Theorem 3H.3]{Donchev}.

\begin{proposition}\label{thm:cplq} {\bf (Convex problems with convex piecewise linear-quadratic regularizers)}
Consider a convex function $f$ given as in \eqref{eq:structure0}. Suppose that $\Argmin f$ is a nonempty compact set. Then $f$ is a KL function with an exponent of $\frac{1}{2}$.
\end{proposition}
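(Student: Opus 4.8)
The plan is to deduce the result from Theorem~\ref{LT_to_KL}, so the task reduces to verifying its two hypotheses: Assumption~\ref{assum1} and the Luo-Tseng error bound. Since $h(x)=l(Ax)$ is convex, $f$ is convex, and hence its stationary set ${\cal X}$ coincides with $\Argmin f$, which is nonempty and compact by assumption. In particular all points of ${\cal X}$ carry the common optimal value, so Assumption~\ref{assum1} holds trivially. The entire difficulty is thus concentrated in establishing the Luo-Tseng error bound \eqref{eq:LTeb}, which I expect to be the main obstacle.

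Rather than working with the proximal residual directly, I would first reduce the Luo-Tseng error bound to the \emph{metric subregularity} of $\partial f$ at each $\bar x\in{\cal X}$, i.e. to a local bound ${\rm dist}(x,{\cal X})\le\kappa\,{\rm dist}(0,\partial f(x))$. The reduction uses the point $x^+:={\rm prox}_P(x-\nabla h(x))$: the optimality condition for the proximal map gives $\nabla h(x^+)-\nabla h(x)+(x-x^+)\in\partial f(x^+)$, so local Lipschitz continuity of $\nabla h$ on a ball around $\bar x$ yields ${\rm dist}(0,\partial f(x^+))\le (L+1)\|x^+-x\|$. Combining this with ${\rm dist}(x,{\cal X})\le\|x-x^+\|+{\rm dist}(x^+,{\cal X})$, the nonexpansiveness \eqref{eq:prox_nonexpansive} (which keeps $x^+$ near $\bar x$ when $x$ is), and metric subregularity applied at $x^+$ produces ${\rm dist}(x,{\cal X})\le c\|{\rm prox}_P(x-\nabla h(x))-x\|$. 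Because $\Argmin f$ is compact, all sublevel sets of the convex $f$ are compact; this lets one patch the local bounds at the points of ${\cal X}$, via a covering argument, into the semi-local form required in Definition~\ref{def:LTeb}.

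It then remains to prove metric subregularity of $\partial f$ at a fixed $\bar x\in{\cal X}$. Two structural facts drive this: strong convexity of $l$ on compact convex sets forces $Ax$ to be constant on $\Argmin f$ --- write $Ax\equiv\bar t$ and $\bar s:=\nabla l(\bar t)$ --- and the stationarity condition $0\in A^\top\nabla l(Ax)+\partial P(x)$ then yields the polyhedral representation ${\cal X}=\{x:\,Ax=\bar t\}\cap(\partial P)^{-1}(-A^\top\bar s)$, both sets being polyhedral since $P$ is piecewise linear-quadratic and hence $\partial P$ has polyhedral graph. Writing $r:={\rm dist}(0,\partial f(x))$ and choosing $\xi=-A^\top\nabla l(Ax)+e\in\partial P(x)$ with $\|e\|=r$, I would: (a) combine the monotonicity of $\partial P$ (using $-A^\top\bar s\in\partial P(u)$ at the projection $u$ of $x$ onto ${\cal X}$) with the strong monotonicity of $\nabla l$ to obtain $\sigma\|Ax-\bar t\|^2\le r\,{\rm dist}(x,{\cal X})$; and (b) use the calmness of the polyhedral multifunction $(\partial P)^{-1}$ (equivalently metric subregularity of $\partial P$, via \cite[Theorem~3H.3]{Donchev}) together with a Hoffman bound for the intersection of the two polyhedra to get ${\rm dist}(x,{\cal X})\le\beta(\|Ax-\bar t\|+r)$.

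The crux is to \emph{decouple} these estimates. Substituting (b) into (a) gives a quadratic inequality in $d:=\|Ax-\bar t\|$ of the form $d^2\le\frac{\beta}{\sigma}\,r(d+r)$, which forces $d\le Cr$ for $r$ bounded; feeding this back into (b) gives ${\rm dist}(x,{\cal X})\le\kappa\,r$, the desired metric subregularity. The points demanding care are that the strong-convexity and Lipschitz moduli of $l$ and $\nabla l$ must be taken uniformly on a fixed compact set containing the relevant points (legitimate since $\Argmin f$ and the local ball $B(\bar x,2\epsilon)$ are compact), and that the Hoffman constant is uniform across the finitely many pieces of the polyhedral graph of $\partial P$. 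Once metric subregularity is secured, the reduction of the second paragraph furnishes the Luo-Tseng error bound, and Theorem~\ref{LT_to_KL} finishes the proof.
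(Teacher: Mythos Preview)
Your proposal is correct and rests on the same structural ingredients as the paper's proof: constancy of $Ax$ on $\Argmin f$ (from strict/strong convexity of $l$), the polyhedrality of $(\partial P)^{-1}(-A^\top\bar s)=\partial P^*(-A^\top\bar s)$ and of $\{x:Ax=\bar t\}$, calmness of the piecewise polyhedral mapping $(\partial P)^{-1}$, and Theorem~\ref{LT_to_KL}. The execution, however, is different. The paper does not reduce to metric subregularity of $\partial f$ nor carry out your decoupling argument; instead it verifies abstract hypotheses---bounded linear regularity of the pair $\{A^{-1}\bar y,\ \partial P^*(-\bar g)\}$ (via \cite[Corollary~5.2.6]{BB96}) and metric subregularity of $\partial P$ (via Robinson's calmness theorem)---and then invokes \cite[Theorem~2, Corollary~1]{ZSo15} as a black box to obtain the Luo-Tseng error bound directly. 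Your route is more self-contained (it essentially unrolls what \cite{ZSo15} does in this concrete situation, combining the Hoffman/linear-regularity bound with the strong-monotonicity inequality through your quadratic-in-$d$ decoupling), whereas the paper's route is shorter on the page because the heavy lifting is delegated to the cited reference. One small point: your claim that ``$\partial P$ has polyhedral graph'' is slightly imprecise---the graph is a finite union of polyhedra---but the conclusion you need, that each level set $(\partial P)^{-1}(v)$ is polyhedral, follows since it equals $\partial P^*(v)$ with $P^*$ convex PLQ, exactly as the paper argues.
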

\begin{proof}
Fix any $\bar x\in \Argmin f$. Denote $\bar y=A\bar x$ and $\bar g=A^*\nabla l(A\bar x)$. Then $-\bar g \in \partial P(\bar x)$ from the first-order optimality condition and \cite[Exercise~8.8(c)]{Rock98}. Notice that $Ax\equiv \bar y$ over $\Argmin f$ due to the strong convexity of $l$ on compact convex sets. Consequently, we also have $A^*\nabla l(Ax) \equiv A^*\nabla l(\bar y)= \bar g$ over $\Argmin f$. Define
\[
\Gamma_f(\bar y) := A^{-1}\bar y\ \ {\rm and}\ \ \Gamma_P(\bar g) := \partial P^*(-\bar g)
\]
as in \cite[Section~3.3]{ZSo15}. We will subsequently show that
\begin{enumerate}[{\rm (1)}]
  \item The pair of sets $\{\Gamma_f(\bar y),\Gamma_P(\bar g)\}$ is boundedly linearly regular.
  \item The subdifferential mapping $\partial P$ is metrically sub-regular at $\bar x$ for $-\bar g$.
\end{enumerate}
Granting these, since $\bar x \in \Argmin f$ is arbitrary, we conclude using \cite[Theorem~2]{ZSo15} and \cite[Corollary~1]{ZSo15} that the Luo-Tseng error bound holds. Since $f$ is convex so that Assumption~\ref{assum1} is trivially satisfied, the desired conclusion follows from Theorem~\ref{LT_to_KL}.

Now it remains to prove the two claims above.

For (1), note that $P^*$ is convex piecewise linear-quadratic according to \cite[Theorem 11.14]{Rock98}, which implies that $\partial P^*(-\bar g)$ is a polyhedral set \cite[Proposition 10.21]{Rock98}. Consequently, it follows from \cite[Corollary 5.2.6]{BB96} that the pair of sets $\{\Gamma_f(\bar y),\Gamma_P(\bar g)\}$ is linearly regular in the sense that there exists $c>0$ such that \[{\rm dist}\big(x,\Gamma_f(\bar y)\cap \Gamma_P(\bar g)\big) \le c \, \max \{{\rm dist}\big(x,\Gamma_f(\bar y)\big),{\rm dist}\big(x,\Gamma_P(\bar g)\big)\} \mbox{ for all } x \in \mathbb{R}^n. \]
Hence, this pair of set is in particular boundedly linearly regular; see \cite[Definition~5.6]{BB96}.

For (2), recall that $\partial P^* = (\partial P)^{-1}$ is a piecewise polyhedral set-valued mapping \cite[Proposition 12.30]{Rock98}. Thus, the set-valued mapping $(\partial P)^{-1}$ is calm by Robinson's theorem on calmness of piecewise affine mappings \cite{Robinson} (see also \cite[Example 9.57]{Rock98}). Using this and the fact that a set-valued mapping is calm if and only if its inverse mapping is metrically sub-regular \cite[Theorem 3H.3]{Donchev}, we conclude further that $\partial P$ is metrically sub-regular at $\bar x$ for $-\bar g\in \partial P(\bar x)$. This completes the proof.
\end{proof}

In our second application, we consider the following model
\begin{equation}\label{eq:structure1}
  f(x) = l(Ax) + \delta_C(x),
\end{equation}
where $l$ is strongly convex on any compact convex set and is continuously differentiable with Lipschitz gradient, and $A\in \R^{m\times n}$. In addition, the set $C$ is defined as
\[
C:= \left\{x:\; \sum_{i=1}^mw_i\|x_i\|_p \le \sigma\right\},
\]
where $\sigma > 0$, $p \in [1,2]$, and for each $i=1,\ldots,m$, $x_i\in \R^{n_i}$ with $\sum_{i=1}^{m}n_i = n$, $w_i > 0$, and
\[
\|x_i\|_p := \left(\sum_{j=1}^{n_i}|(x_i)_j|^p\right)^{\frac1p}.
\]
When $p = 2$ and $l(y) = \frac12\|y - b\|^2$ for some $b$, the model \eqref{eq:structure1} can be viewed as a variant of the group LASSO problem \cite{YL06}. In the next proposition, we show that the function in \eqref{eq:structure1} has the KL property with an exponent of $\frac12$, under mild assumptions.

\begin{proposition}\label{prop:group_lasso}
  Consider a convex function $f$ given as in \eqref{eq:structure1}. Suppose that $\inf_{x\in \R^n}f(x) > \inf_{x\in \R^n}l(Ax)$. Then $f$ is a KL function with an exponent of $\frac{1}{2}$.
\end{proposition}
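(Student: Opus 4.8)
The plan is to view $f$ as a constrained problem with an \emph{active} constraint and invoke the Lagrangian relaxation rule, Theorem~\ref{thm:Lagrange}. I would set $h(x)=l(Ax)$, $g(x)=\sum_{i=1}^m w_i\|x_i\|_p-\sigma$ (so that $C=\{x:\,g(x)\le 0\}$), and $D=\R^n$ (hence $\delta_D\equiv 0$), aiming for the exponent $\alpha=\frac12$. Then $f=h+\delta_C+\delta_D$, the function $g$ is continuous and convex, $D$ is closed and convex, and $l$, being strongly convex on compact convex sets, is in particular continuous and strictly convex; thus the standing hypotheses of Theorem~\ref{thm:Lagrange} hold. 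Since $f\equiv+\infty$ off $C=C\cap D$, every point of ${\rm dom}\,\partial f$ lies in $C\cap D$, so once the three numbered conditions are verified, Theorem~\ref{thm:Lagrange} yields the KL property with exponent $\frac12$ at every point of ${\rm dom}\,\partial f$, which is exactly the claim.

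Conditions (i) and (ii) are immediate. For (i), since $\sigma>0$ the point $x_0=0$ gives $g(0)=-\sigma<0$, a Slater condition. For (ii), because $f(x)=l(Ax)$ on $C$ and $+\infty$ elsewhere, one has $\inf_{x\in C\cap D}h(x)=\inf_{x\in C}l(Ax)=\inf_x f(x)$ while $\inf_{x\in D}h(x)=\inf_x l(Ax)$; hence the required strict inequality $\inf_{x\in D}h(x)<\inf_{x\in C\cap D}h(x)$ is \emph{exactly} the standing assumption $\inf_x f(x)>\inf_x l(Ax)$.

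The substance lies in condition (iii): for every $\lambda>0$ the function $h+\lambda g=l(Ax)+\lambda\sum_{i=1}^m w_i\|x_i\|_p-\lambda\sigma$ must be a KL function with exponent $\frac12$. Discarding the additive constant $-\lambda\sigma$ (which changes neither subdifferentials nor the KL exponent), this is a convex function of the structured form \eqref{P1} with smooth part $l(A\cdot)$ and proper closed convex regularizer $\widetilde P(x)=\lambda\sum_{i=1}^m w_i\|x_i\|_p$. I would handle it through Theorem~\ref{LT_to_KL}: convexity makes Assumption~\ref{assum1} automatic, so it suffices to establish the Luo-Tseng error bound for $l(A\cdot)+\widetilde P$ (if its stationary set happens to be empty, the KL property with exponent $\frac12$ is instead immediate from Lemma~\ref{lem:prep2}).

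I expect this error bound to be the main obstacle, and I would prove it along the lines of Proposition~\ref{thm:cplq} using the framework of \cite{ZSo15}: fixing a stationary $\bar x$, set $\bar y=A\bar x$, $\bar g=A^*\nabla l(A\bar x)$, $\Gamma_f(\bar y)=A^{-1}\bar y$ and $\Gamma_P(\bar g)=\partial\widetilde P^*(-\bar g)$, and then verify that the pair $\{\Gamma_f(\bar y),\Gamma_P(\bar g)\}$ is boundedly linearly regular and that $\partial\widetilde P$ is metrically subregular at $\bar x$ for $-\bar g$, so that \cite[Theorem~2]{ZSo15} and \cite[Corollary~1]{ZSo15} give the Luo-Tseng error bound. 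The new difficulty, relative to Proposition~\ref{thm:cplq}, is that for $p=2$ the regularizer $\widetilde P$ is not piecewise linear-quadratic, so $\Gamma_P(\bar g)$ (a normal cone to a product of $\ell_{p^*}$-balls) need not be polyhedral and the polyhedrality shortcut used there is unavailable; I would instead rely on the fact that the group-LASSO-type regularizer $\sum_i w_i\|x_i\|_p$, $p\in[1,2]$, is among the structured penalties for which both ingredients are verified in \cite{ZSo15}. Assembling (i)--(iii) and applying Theorem~\ref{thm:Lagrange} then finishes the proof.
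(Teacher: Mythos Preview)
Your proposal is correct and follows essentially the same route as the paper: cast $f$ as $h+\delta_C+\delta_D$ with $h=l\circ A$, $g(x)=\sum_i w_i\|x_i\|_p-\sigma$, $D=\R^n$, verify the three hypotheses of Theorem~\ref{thm:Lagrange}, and obtain condition~(iii) by showing the Lagrangian $l(A\cdot)+\lambda g$ is KL with exponent $\tfrac12$ via Theorem~\ref{LT_to_KL} (with Lemma~\ref{lem:prep2} covering the case of empty stationary set).

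The only substantive difference is in how the Luo-Tseng error bound for $l(A\cdot)+\widetilde P$ is obtained. The paper invokes \cite{ZZhSo15} directly: \cite[Proposition~1]{ZZhSo15} gives compactness of $\Argmin h_\lambda$, and then \cite[Corollary~1]{ZZhSo15} (for $p=1$) or \cite[Corollary~2]{ZZhSo15} (for $p\in(1,2]$) yields the error bound outright. You instead propose to rederive it through the bounded-linear-regularity/metric-subregularity template of \cite{ZSo15}, mirroring the proof of Proposition~\ref{thm:cplq}. Both work, but the paper's path is shorter since \cite{ZZhSo15} has already packaged exactly this case; your route would require you to spell out the two ingredients for the non-polyhedral $p\in(1,2]$ case rather than just asserting they are ``among the structured penalties'' handled in \cite{ZSo15}.
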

\begin{proof}
  Let $g(x) := \sum_{i=1}^mw_i\|x_i\|_p - \sigma$. Then $g(0) = -\sigma< 0$. Now, fix any $\lambda > 0$ and consider $h_\lambda(x) = l(Ax) + \lambda g(x)$. We first consider the case $\Argmin(h_\lambda)\neq \emptyset$. In this case, in view of \cite[Proposition~1]{ZZhSo15}, the set $\Argmin(h_\lambda)$ is compact. Thus, if $p=1$, we conclude from \cite[Corollary~1]{ZZhSo15} that the Luo-Tseng error bound holds for $h_\lambda$. On the other hand, if $p\in (1,2]$, then it follows from \cite[Corollary~2]{ZZhSo15} that the Luo-Tseng error bound also holds for $h_\lambda$. Since $h_\lambda$ is convex so that Assumption~\ref{assum1} is trivially satisfied, we see further from Theorem~\ref{LT_to_KL} that $h_\lambda$ is a KL function with an exponent of $\frac12$ in this case. Note that the same conclusion holds when $\Argmin(h_\lambda)=\emptyset$ due to Lemma~\ref{lem:prep2}. Consequently, the assumptions in Theorem~\ref{thm:Lagrange} are satisfied with $D = \R^n$ and $h = l\circ A$, and the desired conclusion follows immediately from this theorem.
\end{proof}

\section{Applications}\label{sec6}

In this section, we apply our results in the previous sections to deducing the KL exponent of some specific functions that arise in various applications. We then discuss how our results can be applied to establishing the linear convergence of some first-order methods.

\subsection{Piecewise linear regularizers}\label{sec5.1}

In this subsection, we demonstrate how our results can be applied to some problems with possibly nonconvex piecewise linear regularizers, i.e., their epigraphs are unions of polyhedrons. In particular, we have the following corollary.

\begin{corollary}\label{cor6}
  Suppose that $f$ is a proper closed function taking the form
  \begin{equation*}
    f(x) = l(Ax) + \min_{1\le i\le r}P_i(x),
  \end{equation*}
  where $A\in \mathbb{R}^{m\times n}$, $P_i$ are proper closed polyhedral functions for $i=1,\ldots,r$ and $l$ satisfies either one of the following two conditions:
  \begin{enumerate}[{\rm (i)}]
      \item $l$ is a proper closed convex function with an open domain, and is strongly convex on any compact convex subset of ${\rm dom}\,l$ and is twice continuously differentiable on ${\rm dom}\,l$.

    \item $l(y) = \max_{u\in D}\{\langle y,u\rangle - q(u)\}$ for all $y\in \R^m$, with $D$ being a polyhedron and $q$ being a strongly convex differentiable function with a Lipschitz continuous gradient.
  \end{enumerate}
  Suppose in addition that $f$ is continuous on ${\rm dom}\,\partial f$. Then $f$ is a KL function with an exponent of $\frac12$.
\end{corollary}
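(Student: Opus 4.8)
The plan is to rewrite $f$ as a minimum of finitely many structured convex pieces, establish the KL exponent $\frac12$ for each piece, and then invoke the minimum rule. First I would record the algebraic identity
$f(x) = l(Ax) + \min_{1\le i\le r}P_i(x) = \min_{1\le i\le r}\{l(Ax)+P_i(x)\} =: \min_{1\le i\le r}f_i(x)$,
where $f_i := l\circ A + P_i$. Each $f_i$ is proper closed and convex (any $f_i\equiv+\infty$ may be discarded, as it does not affect the minimum and at least one piece is proper since $f$ is). I would then verify that ${\rm dom}\,f_i = {\rm dom}\,\partial f_i$: the map $l\circ A$ is convex and differentiable on the open set $\{x:\ Ax\in{\rm dom}\,l\}$ (which is all of $\R^n$ in case (ii)), while $P_i$ is polyhedral and hence has nonempty subdifferential throughout ${\rm dom}\,P_i$, so the sum rule \cite[Exercise~8.8(c)]{Rock98} gives $\partial f_i(x) = A^*\nabla l(Ax) + \partial P_i(x)\neq\emptyset$ exactly on ${\rm dom}\,f_i$. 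This is precisely the hypothesis needed to later apply Corollary~\ref{cor:minKL}.

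Next I would show that each $f_i$ is a KL function with exponent $\frac12$. Since $f_i$ is convex, Assumption~\ref{assum1} holds trivially. If $\Argmin f_i=\emptyset$, then no point satisfies $0\in\partial f_i$, so Lemma~\ref{lem:prep2} already yields the KL property with exponent $\frac12$ at every point of ${\rm dom}\,\partial f_i$. If instead $\Argmin f_i\neq\emptyset$, the stationary set ${\cal X}$ of $f_i$ is nonempty, and I would establish the Luo-Tseng error bound (Definition~\ref{def:LTeb}) for $f_i$ and then conclude via Theorem~\ref{LT_to_KL}. Here the two hypotheses on $l$ are treated separately: in case (i), a polyhedral $P_i$ is in particular a convex piecewise linear-quadratic function, so $f_i$ has essentially the structure \eqref{eq:structure0} treated in Proposition~\ref{thm:cplq} (now allowing $l$ to have an open domain), and the error bound follows from the error-bound machinery for polyhedral regularizers in \cite{TseY09,ZSo15}; in case (ii), $l=(q+\delta_D)^*$ is convex and smooth with a Lipschitz gradient, so $f_i=l\circ A+P_i$ is a composite convex problem with polyhedral $P_i$, again within the scope of \cite{TseY09,ZSo15}. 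In either situation Theorem~\ref{LT_to_KL} then yields that $f_i$ is a KL function with exponent $\frac12$.

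Finally, with each retained $f_i$ proper closed, satisfying ${\rm dom}\,f_i={\rm dom}\,\partial f_i$, and being a KL function with exponent $\frac12$, and with $f=\min_i f_i$ assumed continuous on ${\rm dom}\,\partial f$, Corollary~\ref{cor:minKL} gives that $f$ is a KL function with exponent $\max_i\frac12=\frac12$, as claimed. The main obstacle is the middle step: verifying the Luo-Tseng error bound for each $f_i$ uniformly across the two structural hypotheses on $l$, and in particular doing so \emph{without} a compactness assumption on $\Argmin f_i$. Note that Proposition~\ref{thm:cplq} assumes $\Argmin f$ compact, whereas no such assumption is available here; the error bound must therefore be drawn from the polyhedral-regularizer results in \cite{TseY09,ZSo15}, which require only nonemptiness of the stationary set, rather than from Proposition~\ref{thm:cplq} verbatim. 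Splitting off the empty-minimizer case via Lemma~\ref{lem:prep2}, exactly as in the proof of Proposition~\ref{prop:group_lasso}, is what removes any need for the existence of minimizers.
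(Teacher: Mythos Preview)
Your proposal is correct and follows essentially the same strategy as the paper: decompose $f=\min_i f_i$ with $f_i=l\circ A+P_i$, verify ${\rm dom}\,f_i={\rm dom}\,\partial f_i$ via polyhedricity, dispatch the case $\Argmin f_i=\emptyset$ by Lemma~\ref{lem:prep2}, obtain the KL exponent $\tfrac12$ for the remaining pieces from the Luo-Tseng error bound plus Theorem~\ref{LT_to_KL}, and finish with Corollary~\ref{cor:minKL}.

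The only place the paper is more explicit than your sketch is the error-bound step in case~(i). There the paper does not invoke \cite{ZSo15} (which, as you note, would need compactness of $\Argmin f_i$), but instead lifts $f_i$ to $g_i(x,s)=l(Ax)+s+\delta_{K_i}(x,s)$ with $K_i=\{(x,s):P_i(x)\le s\}$, applies \cite[Theorem~2.1]{LuoT92} to $g_i$, and then descends back to $f_i$ via \cite[Lemma~6]{TseY09}; this is exactly the argument behind \cite[Lemma~7]{TseY09}, so your pointer to \cite{TseY09} is the right one, but the lifting is the concrete mechanism. In case~(ii) the paper simply quotes \cite[Theorem~4]{TseY09}. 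Filling in these two citations would make your argument match the paper's proof line by line.
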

\begin{proof}
  Since $f$ is proper, we can assume without loss of generality that ${\rm dom}\,l\cap A{\rm dom}\,P_i\neq \emptyset$ for $i=1,\ldots,r$. Let $f_i(x) = l(Ax) + P_i(x)$ for $i=1,\ldots,r$. Consider those $f_i$ with $\Argmin f_i\neq \emptyset$. Suppose first that condition (i) holds. Our arguments follow the proof of \cite[Lemma~7]{TseY09}. Define
  \[
  g_i(x,s) = \underbrace{l(Ax) + s}_{h(x,s)} + \delta_{K_i}(x,s) = l\left(\begin{bmatrix}
    A&0
  \end{bmatrix}\begin{bmatrix}
    x\\s
  \end{bmatrix}\right) + \begin{bmatrix}
    0\\1
  \end{bmatrix}^T\begin{bmatrix}
    x\\s
  \end{bmatrix} + \delta_{K_i}(x,s).
  \]
  where $K_i=\{(x,s): P_i(x)\le s\}$. In particular, $h$ takes the form of \cite[Eq~1.1]{LuoT92}.
  Also, one can check that $l$ satisfies the assumptions 1.1 and 1.2 in \cite{LuoT92}.\footnote{Assumption 1.1(a) in \cite{LuoT92} holds because ${\rm dom}\,l$ is open and $l$ is proper. Assumption 1.1(b) and Assumption 1.2(b) in \cite{LuoT92} hold as $l$ is strongly convex on any compact convex subset of ${\rm dom}\,l$ and is twice continuously differentiable on ${\rm dom}\,l$. Assumption 1.2(a) in \cite{LuoT92} holds because we are considering the case that $\Argmin f_i\neq \emptyset$ and so $\Argmin g_i\neq \emptyset$. Finally, assumption 1.1(c) in \cite{LuoT92} holds because $l$ is lower semicontinuous with an open domain, so that for any $\bar y$ in the boundary of the domain, one has $\liminf\limits_{y\to\bar y}l(y) \ge l(\bar y) = \infty$.} Thus, in view of \cite[Theorem~2.1]{LuoT92}, we conclude that the Luo-Tseng error bound holds for $g_i$, i.e., for any $\zeta\ge \inf g_i = \inf f_i$, there exist $C$ and $\epsilon> 0$ so that
  \begin{equation}\label{LT_for_gi}
  {\rm dist}((x,s),\Argmin g_i) \le C\|{\rm Proj}_{K_i}[(x,s) - \nabla h(x,s)] - (x,s)\|
  \end{equation}
  whenever $\| {\rm Proj}_{K_i}[(x,s) - \nabla h(x,s)] - (x,s)\| < \epsilon$ and $g_i(x,s)\le \zeta$.
  Moreover, note that we have
  \[
  \|{\rm Proj}_{K_i}[(x,P_i(x)) - \nabla h(x,P_i(x))] - (x,P_i(x))\| \le \kappa \|{\rm prox}_{P_i}(x - \nabla (l\circ A)(x)) - x\|
  \]
  whenever $x\in {\rm dom}\,f_i$ for some $\kappa > 0$, thanks to \cite[Lemma~6]{TseY09},\footnote{The statement of \cite[Lemma~6]{TseY09} is proved under the assumption that $x\mapsto l(Ax)$ is smooth on an open set containing ${\rm dom}\,P_i$, but it is not hard to see that the proof is valid also in our settings, i.e., when ${\rm dom}\,l\cap A{\rm dom}\,P_i\neq \emptyset$ and ${\rm dom}\,l$ is open. For the convenience of the readers, we include a proof in the appendix.} and that for these $x$, it is easy to show that
  \[
  {\rm dist}(x,\Argmin f_i) \le {\rm dist}((x,P_i(x)),\Argmin g_i).
  \]
  Combining these two observations with \eqref{LT_for_gi}, we conclude that the Luo-Tseng error bound holds for $f_i$ whenever $\Argmin f_i\neq \emptyset$, i.e., for any $\zeta\ge \inf f_i$, there exist $C_1$ and $\epsilon_1> 0$ so that
  \[
  {\rm dist}(x,\Argmin f_i) \le C_1\|{\rm prox}_{P_i}(x - \nabla (l\circ A)(x)) - x\|
  \]
  whenever $\|{\rm prox}_{P_i}(x - \nabla (l\circ A)(x)) - x\| < \epsilon_1$ and $f_i(x)\le \zeta$.
  On the other hand, Suppose that condition (ii) holds. Then it follows from \cite[Theorem~4]{TseY09} that the Luo-Tseng error bound also holds for $f_i$. Next, observe that Assumption~\ref{assum1} is trivially satisfied for all $f_i$ because $f_i$ is convex for all $i$. Using these, Theorem~\ref{LT_to_KL} and Lemma~\ref{lem:prep2} (for the case $\Argmin f_i = \emptyset$), we conclude that the functions $f_i(x) = l(Ax) + P_i(x)$, $1\le i\le r$ are all KL functions with exponents $\frac12$.

  Finally, notice that $l$ has an open domain and is continuously differentiable on it under either condition (i) or (ii); indeed, under condition (ii), $l$ is the conjugate of the strongly convex function $u\mapsto q(u)+\delta_D(u)$, and is thus continuously differentiable everywhere thanks to \cite[Theorem~18.15]{BauCom10}. Consequently, in view of \cite[Exercise~8.8(c)]{Rock98}, we have $\partial f_i(x) = \nabla (l\circ A)(x) + \partial P_i(x)$ for any $x$. From this, we deduce further that, for each $i$,
  \[
  {\rm dom}\,f_i = {\rm dom}(l\circ A)\cap {\rm dom}\,P_i = {\rm dom}(l\circ A)\cap {\rm dom}\,\partial P_i = {\rm dom}\nabla(l\circ A)\cap {\rm dom}\,\partial P_i = {\rm dom}\,\partial f_i,
  \]
  where the second equality is a consequence of the polyhedricity of $P_i$: polyhedral functions are piecewise affine on their domains; see \cite[Proposition~5.1.1]{BorLewis06}. The desired conclusion now follows from Corollary~\ref{cor:minKL}.
\end{proof}

With this corollary in mind, we can show that the following classes of proper closed functions have KL exponents of $\frac12$:
\begin{equation}\label{example1}
\begin{split}
  f_1(x) &= l(Ax) + \mu \|x\|_1 - \mu\gamma \sum_{i=1}^k|x_{[i]}|,\\
  f_2(x) &= l(Ax) + \delta_{\|\cdot\|_0\le r}(x),\\
  f_3(x) &= l(Ax) + \delta_{\|\cdot\|_0\le r}(x) + \delta_{\Delta}(x),
\end{split}
\end{equation}
where: $A\in \mathbb{R}^{m\times n}$ is a measurement matrix and $l$ is defined as in Corollary~\ref{cor6}, $|x_{[i]}|$ is the $i$-th largest (in magnitude) entry in $x$, $k\le n$, $\mu > 0$, $\gamma \in (0,1]$, $r > 0$ and $\Delta:=\{x\in \R^n:\; e^Tx = 1,\ x\ge 0\}$. Indeed, it is clear that the regularizers on the right hand side in \eqref{example1} are representable as the minimum of finitely many proper closed polyhedral functions. Specifically,
\begin{equation*}
\begin{split}
 \mu \|x\|_1 - \mu\gamma \sum_{i=1}^k|x_{[i]}| &= \min_{I\in {\cal I}_k} \left\{\mu \|x\|_1 - \mu\gamma \sum_{i\in I}|x_{i}|\right\},\\
\delta_{\|\cdot\|_0\le r}(x) &= \min_{I \in {\cal I}_{n-r}} \left\{\delta_{H_I}(x)\right\},\\
\delta_{\|\cdot\|_0\le r}(x) + \delta_{\Delta}(x) &= \min_{I \in {\cal I}_{n-r}} \left\{\delta_{H_I\cap \Delta}(x)\right\},
\end{split}
\end{equation*}
where ${\cal I}_k$ is the collection of all subsets of $\{1,\ldots,n\}$ of size $k$ and $H_I = \{x:\; x_i = 0\ \ \forall i\in I\}$.
Thus, the conclusions on the KL exponents follow immediately from Corollary~\ref{cor6}.

The functions in \eqref{example1} arise as models for sparse recovery. Specifically, the function $f_1$ with $l(y) = \frac12\|y - b\|^2$ was considered in \cite{WLZ15}, the function $f_2$ with $l(y) = \frac12 \|y - b\|^2$ was considered in \cite{BluDav08} and many subsequent work, while the function $f_3$ with $l(y) = \frac12 \|y - b\|^2$ arises in the problem of sparse index tracking in portfolio optimization; see, for example, \cite[Section~8]{KBCK13}. In addition, we would like to point out that our assumptions on $l$ are general enough to cover the loss functions for logistic and Poisson regression, where $l(y) = \sum_{i=1}^m\log(1 + \exp(b_iy_i))$ and $l(y) = \sum_{i=1}^m(-b_iy_i + \exp(y_i))$ respectively, for some suitable $b\in \R^m$; see \cite[Table~1]{ZSo15}.
Moreover, one can also consider  functions of the form $l(y) = -\sum_{i=1}^m\log(y_i)$ that are not defined on the whole space. This makes it possible to analyze the local convergence rate for first-order methods applied to some models that involve these functions.

\subsection{Some nonconvex minimum-of-quadratic regularizers}\label{sec5.2}

In this subsection, we demonstrate how our results can be applied to some least squares problems with regularizers that can be written as the minimum of finitely many functions, each of which is the sum of a quadratic function (not necessarily convex) and a proper closed polyhedral function.

Specifically, we consider the following class of functions:
\begin{equation}\label{piecewisef}
f(x)=\min_{1 \le i \le r}\left\{\frac{1}{2}x^TM_ix+u_i^Tx+\beta_i+ P_i(x)\right\},
\end{equation}
where $P_i$ are proper closed polyhedral functions, $M_i\in \cS^n$, $u_i\in \mathbb{R}^n$ and $\beta_i \in \mathbb{R}$, for $i=1,\ldots,r$.  This class of functions covers nonconvex functions which need not be piecewise linear-quadratic,\footnote{For a simple example, consider $f(x)=-|x_1^2+x_2^2-1|$. Clearly, $f$ can be written as a form of \eqref{piecewisef}; while $f$ is not piecewise linear-quadratic because the pieces of this function cannot be arranged as a polyhedral union.} and encompasses a lot of problems that arise in applications, as we shall see later. We now state the following result.
\begin{corollary}\label{thm2}
Let $f$ be defined as in \eqref{piecewisef}. Suppose in addition that $f$ is continuous on ${\rm dom}\,\partial f$. Then $f$ is a KL function with an exponent of $\frac12$.
\end{corollary}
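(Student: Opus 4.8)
The plan is to set $q_i(x) := \frac12 x^T M_i x + u_i^T x + \beta_i$ and $f_i := q_i + P_i$, so that $f = \min_{1\le i\le r} f_i$ and each $f_i$ has exactly the structure \eqref{P1}: $h = q_i$ is smooth with the globally Lipschitz gradient $\nabla q_i(x) = M_i x + u_i$ on the open domain $\R^n$, and $P = P_i$ is proper closed convex. I would first show that each $f_i$ is a KL function with exponent $\frac12$, and then transfer this exponent to the minimum via Corollary~\ref{cor:minKL}. Since the quadratics $q_i$ need not be convex, the convex results (Propositions~\ref{thm:cplq} and \ref{prop:group_lasso}) do not apply, and the work is to establish the Luo-Tseng error bound and Assumption~\ref{assum1} for the nonconvex $f_i$ so that Theorem~\ref{LT_to_KL} can be invoked.

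For the Luo-Tseng error bound I would exploit the piecewise affine structure of the natural residual map $R_i(x) := {\rm prox}_{P_i}(x - \nabla q_i(x)) - x$. Since $\nabla q_i$ is affine and $\partial P_i$ is a polyhedral multifunction (\cite[Proposition~12.30]{Rock98}), the resolvent ${\rm prox}_{P_i} = (I + \partial P_i)^{-1}$ has polyhedral graph and is therefore piecewise affine; hence $R_i$ is piecewise affine and its inverse $R_i^{-1}$ is a polyhedral multifunction. Robinson's theorem on the upper Lipschitz continuity of polyhedral multifunctions \cite{Robinson} (see also \cite[Example~9.57]{Rock98}) then yields ${\rm dist}(x,{\cal X}_i)\le c\,\|R_i(x)\|$ whenever $\|R_i(x)\|$ is small, which is precisely the error bound \eqref{eq:LTeb} for $f_i$ (here ${\cal X}_i = R_i^{-1}(0)$ is the stationary set of $f_i$).

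Verifying Assumption~\ref{assum1} is the more delicate step and is where the possible nonconvexity of $q_i$ must be handled with care. The set ${\cal X}_i = R_i^{-1}(0)$ is a finite union of polyhedra, so it has finitely many connected components, each closed, and distinct components are bounded away from one another; it therefore suffices to show that $f_i$ is constant on each component. I would prove this along segments: if $[x_0,x_1]\subseteq{\cal X}_i$, put $d = x_1 - x_0$ and $\phi(t) = f_i(x_0 + t d)$. Stationarity gives $-\nabla q_i(x)\in\partial P_i(x)$ for each $x\in{\cal X}_i$, so convexity of $P_i$ yields $P_i'(x;d)\ge -\langle\nabla q_i(x),d\rangle$ and the analogous bound for $-d$, whence $f_i'(x;d)\ge 0$ and $f_i'(x;-d)\ge 0$. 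Thus $\phi$ has nonnegative right derivative everywhere and is nondecreasing, while applying the same argument to the reversed segment forces $\phi$ nonincreasing, so $\phi$ is constant and $f_i(x_0)=f_i(x_1)$. Chaining segments along a piecewise linear path lying in a single component (a connected finite union of polyhedra is path-connected, and each maximal segment of the path lies in one constituent polyhedron) shows $f_i$ is constant on each component, giving Assumption~\ref{assum1}.

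With both facts in hand, Theorem~\ref{LT_to_KL} shows each $f_i$ with ${\cal X}_i\neq\emptyset$ is a KL function with exponent $\frac12$; when ${\cal X}_i=\emptyset$ the same follows from Lemma~\ref{lem:prep2}, since then $0\notin\partial f_i(x)$ for every $x\in{\rm dom}\,\partial f_i$. To apply Corollary~\ref{cor:minKL} I would finally note that $\partial f_i = \nabla q_i + \partial P_i$ by \cite[Exercise~8.8(c)]{Rock98} and that polyhedral functions are subdifferentiable throughout their domains (being piecewise affine there, \cite[Proposition~5.1.1]{BorLewis06}), so ${\rm dom}\,f_i = {\rm dom}\,P_i = {\rm dom}\,\partial P_i = {\rm dom}\,\partial f_i$; together with the assumed continuity of $f$ on ${\rm dom}\,\partial f$, Corollary~\ref{cor:minKL} then delivers that $f = \min_{1\le i\le r} f_i$ is a KL function with exponent $\max_i\frac12 = \frac12$. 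I expect the main obstacle to be the verification of Assumption~\ref{assum1}: this is exactly where the nonconvexity of the $q_i$ blocks the earlier convex arguments, and the segment/directional-derivative argument that rescues it relies essentially on the convexity of $P_i$ even though the smooth part is not convex.
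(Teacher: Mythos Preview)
Your overall skeleton coincides with the paper's: write $f=\min_i f_i$ with $f_i=q_i+P_i$, show each $f_i$ is KL with exponent $\tfrac12$ via Theorem~\ref{LT_to_KL}, handle the case ${\cal X}_i=\emptyset$ by Lemma~\ref{lem:prep2}, check ${\rm dom}\,f_i={\rm dom}\,\partial f_i$, and finish with Corollary~\ref{cor:minKL}. The difference lies entirely in how you justify the two hypotheses of Theorem~\ref{LT_to_KL}.

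For the Luo--Tseng error bound the paper simply invokes \cite[Theorem~4]{TseY09}, whereas you argue directly that the residual $R_i$ is piecewise affine (affine $\nabla q_i$ composed with the piecewise-affine ${\rm prox}_{P_i}$), so $R_i^{-1}$ is polyhedral and Robinson's calmness theorem gives ${\rm dist}(x,{\cal X}_i)\le c\|R_i(x)\|$ whenever $\|R_i(x)\|$ is small; this is in fact slightly stronger than Definition~\ref{def:LTeb} since no level-set constraint is needed. For Assumption~\ref{assum1} the paper cites \cite[Lemma~3.1]{LuoT92_2} applied to the epigraphical reformulation $\min\{\tfrac12 x^TM_ix+u_i^Tx+\zeta: P_i(x)\le\zeta\}$, while you prove it from scratch: ${\cal X}_i=R_i^{-1}(0)$ is a finite union of polyhedra, hence has finitely many closed (and therefore locally isolated) components, and on any segment in ${\cal X}_i$ the stationarity condition $-\nabla q_i\in\partial P_i$ together with the subgradient inequality for $P_i$ forces $f_i'(\cdot;d)\ge 0$ in both directions, so $f_i$ is constant on each component. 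Your parenthetical about ``each maximal segment of the path lies in one constituent polyhedron'' is unnecessary (the directional-derivative argument only needs the segment to lie in ${\cal X}_i$, not in a single piece), but it does no harm.

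Both routes are valid. The paper's is shorter and leans on established error-bound literature; yours is more self-contained, exposes exactly where convexity of $P_i$ is used (in the segment argument for Assumption~\ref{assum1}) while allowing $q_i$ to be nonconvex, and makes transparent that the polyhedral structure alone drives both the error bound and the isolation of stationary values.
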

\begin{proof}
  Let $f_i(x) := \frac{1}{2}x^TM_ix+u_i^Tx+\beta_i+ P_i(x)$ for each $i=1,\ldots,r$. When the set of stationary points of $f_i$ is nonempty, it follows from \cite[Theorem~4]{TseY09} that the Luo-Tseng error bound holds. In addition, Assumption~\ref{assum1} can be seen to hold by applying \cite[Lemma~3.1]{LuoT92_2} to the problem $\min\{\frac{1}{2}x^TM_ix+u_i^Tx + \zeta:\; P_i(x)\le \zeta\}$. Thus, for these $f_i$, we have from Theorem~\ref{LT_to_KL} that they are KL functions with an exponent of $\frac12$. On the other hand, if $f_i$ does not have stationary points, then we see from Lemma~\ref{lem:prep2} that $f_i$ is a KL function with an exponent of $\frac12$. Finally, note from \cite[Exercise~8.8(c)]{Rock98} that ${\rm dom}\,\partial f_i = {\rm dom}\,\partial P_i$. Thus, it holds that, for each $i$,
  \[
  {\rm dom}\,\partial f_i = {\rm dom}\,\partial P_i = {\rm dom}\,P_i = {\rm dom}\,f_i,
  \]
  where the second equality follows from the polyhedricity of $P_i$, as these functions are piecewise affine in their domains; see \cite[Proposition~5.1.1]{BorLewis06}.
  The conclusion of this corollary now follows from Corollary~\ref{cor:minKL}.
\end{proof}

 We would like to point out that in the special case where $r=1$ and $P_1(x)=\delta_{C}(x)$ with $C=\{x:Ax \le b\}$, it was established in \cite[Theorem~1]{FNQ06} that the KL inequality holds with an exponent of $\frac12$  under the {\em additional assumption} that $C$ is compact and contains the origin in its interior. As we will see below, our extension here allows us to cover many popular optimization problems with nonconvex regularizers (such as the SCAD and MCP regularizers); while
\cite[Theorem~1]{FNQ06} cannot be applied due to the additional compactness assumption of the domain.

In details, we consider the least squares problems with SCAD \cite{Fan97} or MCP \cite{Zhang10} regularization functions. Recall that the objective function of these problems takes the form
\begin{equation}\label{equation3}
f(x) = \frac12 \|Ax - b\|^2 + \sum_{i=1}^nr(x_i),
\end{equation}
where $A\in \R^{m\times n}$ is a measurement matrix and $r:\R \to \R$ is a continuous function. For SCAD, the continuous function $r$ takes the form
\[
r(t) = \begin{cases}
  \lambda|t| & {\rm if}\ |t|\le \lambda,\\
  \frac{-t^2 + 2\theta\lambda|t| - \lambda^2}{2(\theta-1)} & {\rm if}\, \lambda < |t|\le \theta\lambda,\\
  \frac{(\theta+1)\lambda^2}{2} & {\rm if}\, |t|> \theta\lambda,
\end{cases}
\]
where $\lambda > 0$ and $\theta > 2$;
while for MCP, the function $r$ is given by
\[
r(t) = \begin{cases}
  \lambda|t| - \frac{t^2}{2\theta} & {\rm if}\, |t| \le \theta\lambda,\\
  \frac{\theta\lambda^2}{2} & {\rm if}\, |t| > \theta\lambda,
\end{cases}
\]
where $\lambda > 0$ and $\theta > 0$. In view of the definitions of SCAD and MCP, one can see that the regularization functions $\sum_{i=1}^n r(x_i)$ are continuous functions taking the form
\[
\sum_{i=1}^n \min_{1\le l\le m_i}\{f_{i,l}(x_i) + \delta_{C_{i,l}}(x_i)\}
\]
for some closed intervals $C_{i,l}$ and one dimensional quadratic (or linear) functions $f_{i,l}$, for each $1\le l\le m_i$ and $i=1,\ldots,n$. Since this is a sum of functions with independent variables, it can be equivalently written as
\[
\min_{j \in {\frak J}}\left\{\sum_{i=1}^n [f_{i,j_i}(x_i) + \delta_{C_{i,j_i}}(x_i)]\right\},
\]
where ${\frak J} = \{(j_1,\ldots,j_n)\in \mathbb{N}^n:\; 1\le j_i\le m_i\ \forall i\}$ and $\mathbb{N}$ is the set of positive integers.
Hence, the objective in the corresponding regularized least squares problems can be written as
\[
\min_{j \in {\frak J}}\left\{\frac12\|Ax - b\|^2 + \sum_{i=1}^n [f_{i,j_i}(x_i) + \delta_{C_{i,j_i}}(x_i)]\right\}
\]
to which Corollary~\ref{thm2} is applicable. Thus, the function $f$ in \eqref{equation3} with SCAD or MCP regularizers has a KL exponent of $\frac12$.

\subsection{Local linear convergence of some first-order methods}

In this subsection, we discuss local linear convergence of two common first-order methods, the proximal gradient algorithm and the inertial proximal algorithm, based on the KL exponent.

We first discuss the proximal gradient algorithm, also known as the forward-backward splitting algorithm. This algorithm has been studied extensively in recent years; see, for example, \cite{AtBoSv13} and references therein. The algorithm, in its general form, is applicable to the following optimization problem
\[
\min_{x \in \mathbb{R}^n} f(x) := h(x)+g(x),
\]
where $h$ is a smooth function whose gradient is Lipschitz continuous with constant $L$ and $g$ is a proper closed function. The proximal gradient algorithm for this problem can be formulated as
\begin{eqnarray}\label{FBSupdate}
  x^{k+1} \in \Argmin_{x\in \R^n}\left\{\langle \nabla h(x^k),x - x^k\rangle + \frac{1}{2\gamma_k}\|x - x^k\|^2 + g(x) \right\}.
\end{eqnarray}
where $0< \inf_{k\ge 0}\gamma_k \le \sup_{k\ge 0}\gamma_k < \frac{1}{L}$. The global convergence of this algorithm has been discussed in \cite[Section~5.2]{AtBoSv13}. We state below the local linear convergence of the method under an explicit assumption on the KL exponent, which is an immediate consequence of \cite[Theorem~3.4]{FrGaPe15}. This result will be used together with results in Sections~\ref{sec5.1} and \ref{sec5.2} for deriving new local convergence results when the proximal gradient algorithm is applied to some concrete optimization problems; see Remark~\ref{rem5.1} below.

\begin{proposition}\label{thm:FBS}
Suppose that $\inf f > -\infty$ and that $f$ is a KL function with an exponent of $\frac12$. Let $\{x^k\}$ be the sequence generated by the proximal gradient method given in \eqref{FBSupdate}. Suppose that $\{x^k\}$ is bounded. Then $\{x^k\}$ converges locally linearly to a stationary point of $f$.
\end{proposition}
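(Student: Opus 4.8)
The plan is to verify that the sequence $\{x^k\}$ generated by \eqref{FBSupdate} fits the abstract descent framework of \cite[Theorem~3.4]{FrGaPe15}, and then to read off linear convergence from the fact that the KL exponent equals $\frac12$, which is precisely the boundary case $\alpha \in (0,\frac12]$ in the prototypical convergence result quoted in the introduction. Concretely, I would check the three standard conditions on $\{x^k\}$: a \emph{sufficient decrease} condition $f(x^{k+1}) + a\|x^{k+1} - x^k\|^2 \le f(x^k)$ for some $a > 0$; a \emph{relative error} condition producing $w^{k+1} \in \partial f(x^{k+1})$ with $\|w^{k+1}\| \le b\|x^{k+1} - x^k\|$ for some $b > 0$; and a \emph{continuity} condition guaranteeing that, along some convergent subsequence, the function values converge to the value at the limit. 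Since $f$ has KL exponent $\frac12$ at every point of ${\rm dom}\,\partial f$, \cite[Theorem~3.4]{FrGaPe15} then yields convergence of the whole bounded sequence to a point $\bar x$ together with a linear rate, and outer semicontinuity of $\partial f$ (with respect to $f$-attentive convergence) will identify $\bar x$ as a stationary point.

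For the sufficient decrease condition I would use that $x^{k+1}$ minimizes the strongly convex model in \eqref{FBSupdate}, so comparing its value at $x^{k+1}$ with its value at $x^k$ gives $\langle \nabla h(x^k), x^{k+1} - x^k\rangle + \frac{1}{2\gamma_k}\|x^{k+1} - x^k\|^2 + g(x^{k+1}) \le g(x^k)$. Combining this with the descent lemma $h(x^{k+1}) \le h(x^k) + \langle \nabla h(x^k), x^{k+1} - x^k\rangle + \frac{L}{2}\|x^{k+1} - x^k\|^2$ and the step-size bound $\sup_k \gamma_k < \frac1L$ produces the desired inequality with $a = \frac{1}{2\sup_k\gamma_k} - \frac{L}{2} > 0$. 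For the relative error condition, the first-order optimality condition of the subproblem gives $-\nabla h(x^k) - \frac{1}{\gamma_k}(x^{k+1} - x^k) \in \partial g(x^{k+1})$, so by the sum rule \cite[Exercise~8.8(c)]{Rock98} the vector $w^{k+1} := \nabla h(x^{k+1}) - \nabla h(x^k) - \frac{1}{\gamma_k}(x^{k+1} - x^k)$ lies in $\partial f(x^{k+1})$; the Lipschitz continuity of $\nabla h$ and $\inf_k \gamma_k > 0$ then bound $\|w^{k+1}\| \le b\|x^{k+1} - x^k\|$ with $b = L + \frac{1}{\inf_k\gamma_k}$.

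For the continuity condition, boundedness of $\{x^k\}$ provides a subsequence $x^{k_j} \to \tilde x$; summing the sufficient decrease inequality and using $\inf f > -\infty$ shows $\sum_k \|x^{k+1} - x^k\|^2 < \infty$, whence $\|x^{k+1} - x^k\| \to 0$ and hence $x^{k_j - 1} \to \tilde x$ as well. Evaluating the defining inequality of \eqref{FBSupdate} at the test point $\tilde x$ and passing to the $\limsup$ along the subsequence gives $\limsup_j g(x^{k_j}) \le g(\tilde x)$, which together with the lower semicontinuity of $g$ yields $g(x^{k_j}) \to g(\tilde x)$; continuity of $h$ then gives $f(x^{k_j}) \to f(\tilde x)$. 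I expect this continuity step to be the most delicate part, since it is where lower semicontinuity must be upgraded to genuine convergence of values, and where the structure $f = h + g$ is really needed; by contrast, the decrease and relative-error estimates are direct computations. With the three conditions in hand and the KL exponent equal to $\frac12$, \cite[Theorem~3.4]{FrGaPe15} delivers local linear convergence of $\{x^k\}$ to $\bar x = \lim_k x^k$, and since $w^{k+1} \to 0$ with $w^{k+1} \in \partial f(x^{k+1})$ and $f(x^{k+1}) \to f(\bar x)$, the point $\bar x$ is a stationary point of $f$, completing the argument.
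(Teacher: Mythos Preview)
Your proposal is correct and follows exactly the approach the paper takes: the paper does not give a proof of this proposition at all, stating only that it ``is an immediate consequence of \cite[Theorem~3.4]{FrGaPe15}.'' You have simply unpacked that citation by verifying the sufficient decrease, relative error, and continuity conditions required there, which is precisely the intended argument.
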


The next algorithm we consider is the inertial proximal algorithm, known as iPiano, proposed and studied in \cite{OCBP14}. The algorithm can be applied to minimizing \eqref{P1} when ${\rm dom}\,h=\R^n$ and $\nabla h$ is globally Lipschitz continuous with Lipschitz constant $L$, and its algorithmic framework can be described below: set $x^{-1} = x^0$ and update
\begin{eqnarray}\label{iPianoupdate}
  x^{k+1} = \argmin_{x\in \R^n}\left\{\left\langle \nabla h(x^k) - \frac{\beta_k}{\alpha_k}(x^k-x^{k-1}),x - x^k\right\rangle + \frac{1}{2\alpha_k}\|x - x^k\|^2 + P(x) \right\}.
\end{eqnarray}
where $\{\beta_k\}$ and $\{\alpha_k\}$ are suitable choices of sequence specified in \cite[Algorithm~5]{OCBP14} and \cite[Theorem~4.9]{OCBP14}. In particular, the global convergence of the iPiano has been established in \cite[Theorem~4.9]{OCBP14} by assuming that $\{\beta_k\}\subset [0,1)$, $\delta_k := \frac{2-\beta_k}{2\alpha_k} - \frac{L}2 \equiv \delta > 0$ for some $\delta > 0$, $\inf\limits_k\{\frac{1-\beta_k}{\alpha_k} - \frac{L}2\} > 0$ and $\inf\limits_k\alpha_k > 0$. Here, we focus on the local linear convergence of \eqref{iPianoupdate}. For simplicity, we only consider a version of iPiano with constant step-sizes, presented as \cite[Algorithm~2]{OCBP14}.

\begin{theorem}\label{thm:IP}
Suppose that $f$ is a coercive KL function with an exponent of $\frac12$. Let $\beta_k \equiv \beta \in [0,1)$, $\alpha_k \equiv \alpha \in (0,\frac{2(1-\beta)}{L})$ and $\{x^k\}$ be a sequence generated by \eqref{iPianoupdate}. Then $\{x^k\}$ converges locally linearly to a stationary point of $f$.
\end{theorem}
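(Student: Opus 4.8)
The plan is to run the iPiano iteration \eqref{iPianoupdate} through its natural Lyapunov function and then feed the exponent supplied by Theorem~\ref{thm:extra} into the KL-based convergence-rate machinery recalled in the introduction. Following \cite[Theorem~4.9]{OCBP14}, I would work with the companion sequence $z^k := (x^k,x^{k-1})$ and the potential $F(x,y) := f(x) + \frac{\gamma}{2}\|x-y\|^2$, where $\gamma > 0$ is the constant produced by the descent analysis of iPiano; the stepsize restriction $\alpha\in(0,\frac{2(1-\beta)}{L})$ is exactly what guarantees that this $\gamma$ is positive. Because $f$ is a KL function with exponent $\frac12\in[\frac12,1)$, Theorem~\ref{thm:extra} (with its coefficient set to $\gamma$) tells us that $F$ has the KL property with exponent $\frac12$ at every point $(\bar x,\bar x)$ for which $\bar x$ is stationary for $f$, which is precisely the exponent that yields a local linear rate.

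First I would exploit coercivity. Since $F(x,y)\ge f(x)$ and $\{F(z^k)\}$ is nonincreasing along the iterates, coercivity of $f$ bounds $\{f(x^k)\}$ and hence keeps $\{x^k\}$ (and thus $\{z^k\}$) bounded; coercivity also ensures $\inf f>-\infty$. Next I would record the two quantitative estimates that the analysis in \cite{OCBP14} yields for $F$ along $\{z^k\}$: a sufficient-decrease inequality $F(z^{k+1}) + a_1\|x^{k+1}-x^k\|^2 \le F(z^k)$ for some $a_1>0$, and a relative-error bound providing $w^{k+1}\in\partial F(z^{k+1})$ with $\|w^{k+1}\|\le a_2(\|x^{k+1}-x^k\|+\|x^k-x^{k-1}\|)$ for some $a_2>0$. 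Summing the decrease inequality gives $\sum_k\|x^{k+1}-x^k\|^2<\infty$, so $\|x^{k+1}-x^k\|\to0$; therefore every cluster point of $\{z^k\}$ has the form $(x^*,x^*)$ with $x^*$ stationary for $f$, which is exactly the kind of point at which $F$ enjoys the KL inequality with exponent $\frac12$.

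With the sufficient-decrease condition, the relative-error condition and the KL exponent $\frac12$ at the relevant cluster point all in hand, I would invoke the same abstract KL-based rate result used in Proposition~\ref{thm:FBS} (see \cite[Theorem~3.4]{Jerome} and \cite[Theorem~3.4]{FrGaPe15}), now applied to $\{z^k\}$ and $F$ in place of $\{x^k\}$ and $f$. This delivers local linear convergence of $\{z^k\}$, and since $\|x^k-x^*\|\le\|z^k-(x^*,x^*)\|$ the linear rate transfers immediately to $\{x^k\}$, whose limit $x^*$ is stationary for $f$.

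The main obstacle I anticipate is the bookkeeping that casts iPiano into the abstract three-condition framework rather than the KL input itself, which Theorem~\ref{thm:extra} hands us cleanly. In particular, the descent estimate controls only the single increment $\|x^{k+1}-x^k\|^2$, whereas the subgradient bound and the joint variable $z^k$ mix both consecutive increments; I would need to verify that the version of the abstract theorem being cited accommodates a relative-error bound of the form $\|x^{k+1}-x^k\|+\|x^k-x^{k-1}\|$ (as is standard for inertial schemes), and that the optimality condition defining \eqref{iPianoupdate} indeed produces a subgradient of $F$ at $z^{k+1}$ obeying this estimate.
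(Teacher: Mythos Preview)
Your setup is exactly the one the paper uses: the same potential $F(x,y)=f(x)+\delta\|x-y\|^2$ (with $\delta=\frac{2-\beta}{2\alpha}-\frac{L}{2}$), the same appeal to Theorem~\ref{thm:extra} to get the KL exponent $\frac12$ for $F$ at diagonal points, and the same two estimates from \cite{OCBP14} (sufficient decrease in $\|x^{k+1}-x^k\|^2$ and a subgradient bound involving $\|x^{k+1}-x^k\|+\|x^k-x^{k-1}\|$). The paper also invokes \cite[Theorem~4.9]{OCBP14} to obtain convergence of the whole sequence $\{x^k\}$ to a stationary point before turning to the rate.

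The only substantive difference is in how the linear rate is extracted, and here the obstacle you flag is exactly the reason the paper does \emph{not} cite \cite[Theorem~3.4]{FrGaPe15} or \cite[Theorem~3.4]{Jerome} for this step. Those abstract results require the sufficient-decrease and relative-error conditions to be expressed in the same increment, whereas here the decrease controls only $\|x^{k+1}-x^k\|^2$ while the subgradient bound mixes two consecutive increments. Rather than massage the iteration into that framework, the paper works directly: it combines the KL inequality for $F$ with the two estimates to obtain
\[
F_\delta(x^{k+2},x^{k+1})-F_\delta(\bar x,\bar x)\;\le\;\frac{C}{1+C}\bigl(F_\delta(x^{k},x^{k-1})-F_\delta(\bar x,\bar x)\bigr),
\]
i.e.\ a two-step contraction. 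This gives $Q$-linear convergence along the even and odd subsequences, hence $R$-linear convergence of $\{F_\delta(x^{k+1},x^k)\}$; feeding this back into the sufficient-decrease inequality yields $\|x^{k+1}-x^k\|\le Mc^k$ and then summability gives the $R$-linear rate for $\{x^k\}$. So your plan is correct, but the cleanest way to close it is this short direct computation rather than an appeal to the abstract machinery.
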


\begin{proof}
  Define $F_\delta(x,y) := f(x) + \delta\|x - y\|^2$, where $\delta:= \frac{2-\beta}{2\alpha} - \frac{L}2 > 0$. Since $f$ is a KL function with an exponent of $\frac12$, we see from Theorem~\ref{thm:extra} that $F_\delta$ has the KL property with an exponent of $\frac12$ at any points of the form $(a,a)\in {\rm dom}\,\partial F_\delta$. Thus, by \cite[Theorem~4.9]{OCBP14}, the whole sequence $\{x^k\}$ converges to a stationary point $\bar x$ of $f$. It remains to establish local linear convergence.

  To see this, recall from \cite[Theorem~4.9]{OCBP14} that properties {\bf H1}, {\bf H2} and {\bf H3} in \cite[Section~3.2]{OCBP14} are satisfied, i.e., there are positive numbers $c_1$ and $c_2$ such that for all $k\ge 0$,
  \begin{align}\label{IPH2}
  \begin{split}
    c_1\|x^{k} - x^{k-1}\|^2 &\le F_\delta(x^k,x^{k-1}) - F_\delta(x^{k+1},x^k),\\
    {\rm dist}(0,\partial F_\delta(x^{k+1},x^k))&\le c_2 (\|x^{k} - x^{k-1}\| + \|x^{k+1} - x^k\|),\\
    F_\delta(x^{k_i+1},x^{k_i}) &\to F_\delta(\bar x,\bar x)\ \ \mbox{for some subsequence }\{x^{k_i}\}.
  \end{split}
  \end{align}
  The first and the third properties together imply that $\lim_{k\to\infty} F_\delta(x^{k+1},x^k) = F_\delta(\bar x,\bar x)$.
  Using this, the KL property of $F_\delta$ at $(\bar x,\bar x)$ and the fact that $x^k \to \bar x$, we see the existence of $c_3 > 0$ and $k_0 \in \mathbb{N}$ so that
  \[
  {\rm dist}(0,\partial F_\delta(x^{k+1},x^k)) \ge c_3\sqrt{F_\delta(x^{k+1},x^k) - F_\delta(\bar x,\bar x)}
  \]
  whenever $k\ge k_0$.
  Combining this relation with \eqref{IPH2}, we have
  \[
  \begin{split}
  &F_\delta(x^{k+2},x^{k+1}) - F_\delta(\bar x,\bar x)\le F_\delta(x^{k+1},x^k) - F_\delta(\bar x,\bar x) \le \frac1{c_3^2}{\rm dist}^2(0,\partial F_\delta(x^{k+1},x^k)) \\
  &\le 2\left(\frac{c_2}{c_3}\right)^2(\|x^{k} - x^{k-1}\|^2 + \|x^{k+1} - x^k\|^2)\le \frac2{c_1}\left(\frac{c_2}{c_3}\right)^2[F_\delta(x^{k},x^{k-1}) - F_\delta(x^{k+2},x^{k+1})]
  \end{split}
  \]
  for $k\ge k_0$, where the first inequality follows from the first relation in \eqref{IPH2}. Rearranging terms, we see that
  \[
  0\le F_\delta(x^{k+2},x^{k+1}) - F_\delta(\bar x,\bar x)\le \frac{C}{1+C}[F_\delta(x^{k},x^{k-1}) - F_\delta(\bar x,\bar x)],
  \]
  where $C = \frac2{c_1}\left(\frac{c_2}{c_3}\right)^2$. This shows that the sequences $\{F_\delta(x^{2k+1},x^{2k})\}$ and $\{F_\delta(x^{2k},x^{2k-1})\}$ are both $Q$-linearly convergent. 
  This implies that the whole sequence $\{F_\delta(x^{k+1},x^{k})\}$ is $R$-linearly convergent in the sense that $$\limsup_{k \rightarrow \infty}\sqrt[k]{|F_\delta(x^{k+1},x^{k})-F_\delta(\bar x,\bar x)|}<1.$$ Combining this observation with the first relation in \eqref{IPH2}, we conclude further that there exist $M > 0$ and $k_1\in \mathbb{N}$ and $c \in (0,1)$ so that for $k\ge k_1$, $\|x^{k+1} - x^k\| \le M c^k$. Consequently, whenever $k\ge k_1$, we have
  \[
  \|x^k - \bar x\| \le \sum_{i = k}^\infty\|x^i - x^{i+1}\| \le \frac{M}{1-c}c^k,
  \]
  showing that $\{x^k\}$ is $R$-linearly convergent, that is, $\limsup_{k \rightarrow \infty}\sqrt[k]{\|x^k-\bar x\|}<1$. This completes the proof.
\end{proof}

\begin{remark}{\bf (New local linear convergence result for existing first-order methods)}\label{rem5.1}
As applications of Proposition~\ref{thm:FBS} and Theorem~\ref{thm:IP}, we derive new local linear convergence results for two existing first-order methods applied to some concrete optimization problems:
\begin{enumerate}[{\rm (i)}]
\item Combining Proposition~\ref{thm:FBS} with the discussions in Sections~\ref{sec5.1} and \ref{sec5.2}, it is immediate that the proximal gradient algorithm exhibits local linear convergence when it is applied to $\min_{x \in \mathbb{R}^n} h(x)+P(x)$, where
\begin{enumerate}[{\rm (a)}]
\item  $h(x)=l(Ax)$ and $P(x)=\min_{1\le i\le r}P_i(x)$. Here,
 $l$ is strongly convex on any compact convex set and is twice continuously differentiable with a Lipschitz continuous gradient, $A\in \mathbb{R}^{m\times n}$ and $P_i$ are proper closed polyhedral functions for $i=1,\ldots,r$. This, in particular, covers the $\ell_1$ regularized logistic regression problem; or
\item $h(x)=\frac{1}{2}\|Ax-b\|^2$ and $P$ is the SCAD or MCP regularizers;
\end{enumerate}
assuming that $\inf f>-\infty$ and the sequence generated is bounded for both cases.

\item Similarly, one can also study local convergence rate when applying the iPiano to minimizing functions in the form of \eqref{P1} for which the Luo-Tseng error bound holds using Theorems~\ref{LT_to_KL} and \ref{thm:IP}. For example, as a consequence of  Proposition~\ref{thm:cplq}, local linear convergence is guaranteed when applying the iPiano to $\min_{x \in \mathbb{R}^n} h(x)+P(x)$ with coercive objectives, where $h(x)=l(Ax)$, $l$ and $A$ are given as in item (i) case (a), and $P$ is a convex piecewise linear-quadratic regularizer.

\end{enumerate}
\end{remark}

\section{Concluding remarks}\label{sec:conclude}

  In this paper, we studied the KL exponent by developing calculus rules for the exponent and relating the exponent to the concept of Luo-Tseng error bound. Consequently, many convex or nonconvex optimization models that arise in practical applications can be shown to have a KL exponent of $\frac12$. We also discuss how our results can be applied to establishing local linear convergence of some first-order methods.

  One future research direction is to develop calculus rules for deducing the exponent of potential functions such as the augmented Lagrangian function $L_{\beta}(x,y,z)=f(x)+g(y)+z^T(x-y)+\frac{\beta}{2}\|x-y\|^2$ used in the convergence analysis of the alternating direction method of multiplier in a nonconvex setting; see, for example, \cite{AmesHong14,HongLuoRa16,LiPong14_1}. Another direction is to derive the exponent for least squares models with some other popular nonconvex regularizers $P$ such as the logistic penalty function, $P(x)=\lambda \sum_{i=1}^n\log(1 + \alpha|x_i|)$ \cite{nnzc2008}, and the fraction penalty function, $P(x)=\lambda \sum_{i=1}^n\frac{\alpha|x_i|}{1+\alpha|x_i|}$ \cite{gr1992}.

\appendix
\section{An auxiliary lemma}

In this appendix, we prove a version of \cite[Lemma~6]{TseY09} for a class of proper closed functions taking the form $f:= \ell+P$, where $\ell$ is a proper closed function with an open domain and is continuously differentiable on ${\rm dom}\,\ell$, and $P$ is a proper closed polyhedral function. Our proof follows exactly the same line of arguments as \cite[Lemma~6]{TseY09} and is only included here for the sake of completeness.

In what follows, we let $K:= \{(x,s):\;s\ge P(x)\}$, and define
\[
g(x,s) = \underbrace{\ell(x) + s}_{h(x,s)} + \delta_K(x,s).
\]
Then we have the following result.

\begin{lemma}
  There exists $C > 0$ so that for any $x\in {\rm dom}\,f$, we have
  \[
  \|{\rm Proj}_K[(x,P(x)) - \nabla h(x,P(x))] - (x,P(x))\|\le C \|{\rm prox}_P(x-\nabla \ell(x)) - x\|.
  \]
\end{lemma}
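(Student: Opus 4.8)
The plan is to exploit that $K={\rm epi}\,P=\{(x,s):s\ge P(x)\}$ is a closed convex set, so that ${\rm Proj}_K$ is single-valued and nonexpansive (it is ${\rm prox}_{\delta_K}$, so a special case of \eqref{eq:prox_nonexpansive}) and is characterized by the normal-cone condition ${\rm Proj}_K(a)=b\iff b\in K$ and $a-b\in N_K(b)$ (the specialization of ``$x={\rm prox}_P(z)$ iff $z\in x+\partial P(x)$'' to $P=\delta_K$). Writing $z:=x-\nabla\ell(x)$ and noting $\nabla h(x,s)=(\nabla\ell(x),1)$, the point whose projection must be controlled is $(z,P(x)-1)$; set $(w,\rho):={\rm Proj}_K(z,P(x)-1)$. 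Set $p:={\rm prox}_P(z)$, so that $z-p\in\partial P(p)$ and the right-hand side of the desired inequality is $\|p-x\|$. First I would record that both $x$ and $p$ lie in ${\rm dom}\,P$ (since $x\in{\rm dom}\,f$ and $p$ is a proximal point), and that, because $P$ is polyhedral, $P$ is piecewise affine on ${\rm dom}\,P$ (see \cite[Proposition~5.1.1]{BorLewis06}) and hence globally Lipschitz there with a constant $L_P>0$ that does not depend on $x$.

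The key step is to realize $(p,P(p))$ itself as an epigraphical projection of a point sharing the first coordinate $z$. Indeed, since $z-p\in\partial P(p)$, the vector $(z-p,-1)$ is a (non-vertical) normal to ${\rm epi}\,P$ at $(p,P(p))$, i.e. $(z-p,-1)\in N_K(p,P(p))$; as $(p,P(p))\in K$, the projection characterization gives ${\rm Proj}_K(z,P(p)-1)=(p,P(p))$. Now I would compare the two projections ${\rm Proj}_K(z,P(x)-1)=(w,\rho)$ and ${\rm Proj}_K(z,P(p)-1)=(p,P(p))$: their arguments differ only in the last coordinate, by $P(x)-P(p)$, so nonexpansiveness of ${\rm Proj}_K$ yields $\|(w,\rho)-(p,P(p))\|\le|P(x)-P(p)|$.

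Finally, I would combine this with the triangle inequality $\|(w,\rho)-(x,P(x))\|\le\|(w,\rho)-(p,P(p))\|+\|(p,P(p))-(x,P(x))\|$ and estimate the second term by $\sqrt{\|p-x\|^2+(P(p)-P(x))^2}$. Using $|P(p)-P(x)|\le L_P\|p-x\|$, both terms are bounded by a constant multiple of $\|p-x\|$, giving the claim with $C:=L_P+\sqrt{1+L_P^2}$. The one point requiring care — and the crux of the argument — is the normal-cone identity $(z-p,-1)\in N_K(p,P(p))$, which is exactly the translation of the proximal optimality condition $z-p\in\partial P(p)$ into a statement about ${\rm epi}\,P$; polyhedricity is genuinely needed only to guarantee that $L_P$ can be taken uniform over all $x\in{\rm dom}\,f$.
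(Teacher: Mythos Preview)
Your argument is correct and takes a genuinely different route from the paper. The paper's proof writes both $(y,\mu):={\rm Proj}_K[(x,P(x))-\nabla h(x,P(x))]$ and $w:={\rm prox}_P(x-\nabla\ell(x))$ as minimizers of strongly convex problems, then exploits the quadratic growth (three-point) inequalities to compare the objective values at $(y,\mu)$ versus $(w,P(w))$ and at $w$ versus $y$; summing the two resulting inequalities yields
\[
\tfrac12(\mu-P(x))^2+\|y-w\|^2\le\tfrac12(P(w)-P(x))^2-\tfrac12(\mu-P(w))^2,
\]
from which the Lipschitz property of $P$ gives $|\mu-P(x)|\le M\|w-x\|$ and $\|y-w\|\le M\|w-x\|$, hence $\|y-x\|\le(M+1)\|w-x\|$. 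Your argument instead hinges on the geometric observation that the proximal optimality condition $z-p\in\partial P(p)$ is exactly the statement $(z-p,-1)\in N_K(p,P(p))$, so $(p,P(p))$ is itself the projection onto $K$ of $(z,P(p)-1)$; nonexpansiveness of ${\rm Proj}_K$ then compares this projection with ${\rm Proj}_K(z,P(x)-1)$ directly, and the triangle inequality plus Lipschitzness of $P$ finish. Both proofs need polyhedricity only to secure the uniform Lipschitz constant. Your approach is arguably cleaner and more conceptual, avoiding the pair of strong-convexity inequalities; the paper's approach is slightly more self-contained in that it does not invoke the normal-cone description of $N_{{\rm epi}\,P}$ and yields the marginally sharper pointwise control $\|y-w\|^2+\tfrac12(\mu-P(w))^2\le\tfrac12(P(w)-P(x))^2$, though this extra precision is not needed for the lemma as stated.
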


\begin{proof}
  For notational simplicity, let
  \[
  \begin{split}
    (y,\mu) &:= {\rm Proj}_K[(x,P(x)) - \nabla h(x,P(x))],\\
    w &:= {\rm prox}_P(x-\nabla \ell(x)).
  \end{split}
  \]
  Note that $\nabla h(x,P(x)) = (\nabla \ell(x),1)$. Using these and the definitions of proximal mapping and projection, we have
  \begin{align}
    (y,\mu) &= \argmin_{(u,s)\in K}\left\{\langle \nabla \ell(x),u-x\rangle + (s - P(x)) + \frac12\|u-x\|^2 + \frac12(s - P(x))^2\right\},\label{appendix:def1}\\
    w & = \argmin_{u}\left\{\langle \nabla \ell(x),u-x\rangle + \frac12\|u-x\|^2 +P(u)\right\}.\label{appendix:def2}
  \end{align}
  Now, using the strong convexity of the objective function in \eqref{appendix:def1} and comparing its function values at the points $(y,\mu)$ and $(w,P(w))$, we have
  \begin{equation}\label{appendix:rel1}
    \begin{split}
      &\langle \nabla \ell(x),y-x\rangle + (\mu - P(x)) + \frac12\|y-x\|^2 + \frac12(\mu - P(x))^2\\
      & \le \langle \nabla \ell(x),w-x\rangle + (P(w) - P(x)) + \frac12\|w-x\|^2 + \frac12(P(w) - P(x))^2\\
      & \ \ \ \ - \frac12 \|y - w\|^2 - \frac12 (\mu - P(w))^2.
    \end{split}
  \end{equation}
  Similarly, using the strong convexity of the objective function in \eqref{appendix:def2} and comparing its function values at the points $w$ and $y$, we have
  \begin{equation}\label{appendix:rel2}
    \begin{split}
      &\langle \nabla \ell(x),w-x\rangle + \frac12\|w-x\|^2 +P(w)\\
      &\le \langle \nabla \ell(x),y-x\rangle + \frac12\|y-x\|^2 +P(y) - \frac12 \|y - w\|^2\\
      &\le \langle \nabla \ell(x),y-x\rangle + \frac12\|y-x\|^2 +\mu - \frac12 \|y - w\|^2,
    \end{split}
  \end{equation}
  where the last inequality follows from the fact that $(y,\mu)\in K$. Summing the inequalities \eqref{appendix:rel1} and \eqref{appendix:rel2} and rearranging terms, we see further that
  \begin{equation}\label{appendix:rel3}
    \frac12(\mu - P(x))^2 + \|y - w\|^2\le \frac12(P(w)-P(x))^2 - \frac12(\mu - P(w))^2.
  \end{equation}
  Since $P$ is a proper closed polyhedral function, it is piecewise linear on its domain (see, for example, \cite[Proposition~5.1.1]{BorLewis06}) and hence is Lipschitz continuous on its domain. Thus, it follows from this and \eqref{appendix:rel3} that there exists $M > 0$ so that
  \begin{align}\label{appendix:rel4}
    |\mu - P(x)| \le |P(w) - P(x)| \le M \|w - x\|\ \ {\rm and}\ \ \|y - w\|\le |P(w) - P(x)|\le M\|w-x\|.
  \end{align}
  Moreover, we can deduce further from the second relation in \eqref{appendix:rel4} that
  \[
  \|y - x\|\le \|y - w\| + \|w - x\|\le (M+1)\|w-x\|.
  \]
  This together with the first relation in \eqref{appendix:rel4} and the definitions of $(y,\mu)$ and $w$ completes the proof.
\end{proof}


{\bf Acknowledgements.} We would like to thank the two anonymous referees for their detailed comments that helped us to improve the manuscript.

\end{document}